\DeclareMathOperator{\betti}{b_1}
\DeclareMathOperator{\comp}{Comp}
\DeclareMathOperator{\core}{Core}
\DeclareMathOperator{\depthpc}{depth_{\text{pc}}}
\DeclareMathOperator{\elliptic}{Ell}
\DeclareMathOperator{\img}{Im}
\DeclareMathOperator{\jcomp}{JComp}
\DeclareMathOperator{\Mod}{Mod}
\DeclareMathOperator{\nbhd}{Nbhd}
\DeclareMathOperator{\ninj}{NInj}
\DeclareMathOperator{\rk}{rk}
\DeclareMathOperator{\scott}{sc}
\def\jsj{\textsc{JSJ}}
\def\ajsj{\textsc{aJSJ}}
\def\rjsj{\textsc{rJSJ}}
\def\qh{\textsc{QH}}
\def\hnn{\textsc{HNN}}
\def\csa{\textsc{CSA}}
\def\gad{\textsc{GAD}}
\newcommand{\E}{\mathcal{E}}
\newcommand{\G}{\mathcal{G}}
\newcommand{\V}{\mathcal{V}}
\newcommand{\W}{\mathcal{W}}
\newcommand{\HH}{\mathcal{H}}
\def\sl2c{\ensuremath{{SL}(2,\mathbb{C})}}
\def\t1sl2c{{\mathfrak sl}_2\mathbb{C}}
\def\free{\ensuremath{\mathbb{F}}}
\def\zee{\mathbb{Z}}
\def\define{\raisebox{0.3pt}{\ensuremath{:}}\negthinspace\negthinspace=}
\def\immerses{\looparrowright}
\def\onto{\twoheadrightarrow}
\def\into{\hookrightarrow}
\newcommand{\adjoin}[2]{\ensuremath{#1\!\left[#2\right]}}
\newcommand{\adjoinroot}[3]{\ensuremath{\adjoin{#1}{\sqrt[#3]{#2}}}}
\newcommand{\grushko}[3]{\ensuremath{#1_1*\cdots*#1_{#2}*\free_{#3}}}
\def\cent{Z}
\newcommand{\Doubletwo}[3]{ \left\{ #1 \mid #3\right\} }
\newcommand{\Doubleone}[1]{ \left\{ #1 \right\} }
\newcommand{\Doublethr}[3]{ \left\{ #1 \right\}_{#3} }
\newcommand{\set}[1]{%
\@ifnextchar:{\Doubletwo{#1}}{\@ifnextchar_{\Doublethr{#1}}{\Doubleone{#1}}}%
}
\newcommand{\grouptwo}[3]{ \langle #1 \mid #3\rangle }
\newcommand{\groupone}[1]{ \langle #1 \rangle }
\newcommand{\group}[1]{%
\@ifnextchar:{\grouptwo{#1}}{\groupone{#1}}%
}
\newcommand{\term}[1]{{\emph{#1}}}
\newcommand{\mobius}{M\"obius}
\newtheorem{theorem}{Theorem}[section]
\newtheorem{lemma}[theorem]{Lemma}
\newtheorem{example}[theorem]{Example}
\theoremstyle{definition}
\newtheorem{definition}[theorem]{Definition}
\theoremstyle{remark}
\newtheorem{remark}[theorem]{Remark}
\newcommand{\mnote}[1]{}
\title[Adjoining roots]{Krull dimension for limit groups IV:\\
Adjoining roots} 
\author[Lars Louder]{Larsen Louder}
\address{Department of Mathematics\\
University of Michigan \\
Ann Arbor, MI 48109-1043\\
USA}
\email[Larsen Louder]{llouder@umich.edu, lars@d503.net}
\keywords{limit group, krull dimension, JSJ, fully residually free}
\subjclass[2000]{Primary: 20F65; Secondary: 20E05, 20E06}
\thanks{Most of this research was done while at the University of
  Utah. The author also gratefully acknowledges support from the
  National Science Foundation, MSRI, and Rutgers University, Newark.}
\begin{document}

\begin{abstract}
  This is the fourth and last paper in a sequence on Krull dimension
  for limit groups, answering a question of Z.~Sela. In it we finish
  the proof, analyzing limit groups obtained from other limit groups
  by adjoining roots. We generalize our work on Scott complexity and
  adjoining roots from the previous paper in the sequence to the
  category of limit groups.
\end{abstract}

\maketitle


\section{Introduction, Notation, Theorems}
\label{sec:introduction}

\par It will take a moment to establish the notation and define the
objects needed to state our main theorem. Roughly, we are interested
in solutions, in the category of limit groups, to equations of the
form ``adjoin a root to $g$.'' We can give no specific
characterizations of solutions, but under special circumstances
arising in the second paper in this series, \cite{louder::stable}, we
are able to show that most of the time solutions are unique.

\par The notation $\cent_G(E)$ indicates the centralizer in $G$ of a
subgroup $E$.  The set of images of edge groups incident to a vertex
group $V$ of a graph of groups decomposition is denoted by
$\E(V)$. The phrase ```$X$' is controlled by `$Y$''' should
be read as ``there is a function $f,$ defined independently of `$X$'
and `$Y$', such that $X\leq f(Y)$''.

\par Let $G$ be a group.  A system of equations over $G$ is a
collection of words in the alphabet $\set{x_i,g}:{g\in G},$ where
the $x_i$ are variables distinct from the elements of $G$.  The
elements of $G$ are the coefficients, and the coefficients occuring in
$\Sigma$ are the coefficients of $\Sigma$.  If $\Sigma$ is a system of
equations over $G$ there is a canonical group $G_{\Sigma}$ associated
to $\Sigma$ with the presentation $\group{x_i,G}:{\Sigma},$
where the $x_i$ are the variables occuring in $\Sigma$.  If the map
$G\to G_{\Sigma}$ is injective then $\Sigma$ has a solution.  If $G<H$
and the inclusion map extends to $G_{\Sigma}$ then $\Sigma$ has a
solution in $H$.  In analogy with field extensions, suppose $\Sigma$
is a system of equations over $G$.  If $G<H$ and the inclusion map
extends to a surjection $G_{\Sigma}\onto H$ then $H$ is a
\term{splitting group} for $\Sigma,$ and $G$ is the \term{ground
  group}.  Splitting groups are partially ordered by the relation
``maps onto.''  Every pair $G<H$ is a ground-splitting pair for some
(in general, many) system of equations $\Sigma(G,H)$.  A tuple
$(G,H,G')$ is \term{flight} if $H$ and $G'$ are both splitting groups
over $G,$ and $H\onto G'$.

\par One may ask for splitting groups in a category $\mathcal{C}$ of
groups. If $H\in\mathcal{C}$ is a splitting group, then $H$ is a
splitting group in $\mathcal{C}$. If $\mathcal{C}$ is the class of all
groups, then there are maximal $\mathcal{C}$--splitting groups, but
this is not the case for general classes.

\par A sequence of inclusions $\G=(\G(0)<\G(1)<\dotsb)$ is a
\term{tower}.  A \term{staircase} is a pair of sequences $(\G,\HH)$
such that $\G$ is a tower and $\G(i)$ is a splitting group for (some
system) $\Sigma(\G(i-1),\HH(i)),$ that is, $(\G(i-1),\HH(i),\G(i))$ is a
flight.  All staircases considered in this paper have the property
that all coefficients lie in $\G(0)$.  The name staircase comes from
the fact that a commutative diagram representing one looks like a
staircase and walks up a tower.

\begin{definition}[Adjoining roots]
  \label{def:adjunctionofroots}
  Let $G$ be a finitely generated group, $\E$ a collection of
  nontrivial abelian subgroups of $G$.  For each $E\in\E,$
  let $\mathcal{F}(E)$ be a collection of finite index supergroups of
  $E,$ with an inclusion map $i_{E,F}\colon E\into F$ for each
  $F\in\mathcal{F}(E),$ and let $\mathcal{F}(\E)$ be the
  collection $\set{\mathcal{F}(E)}$.  Let
  \[
  \adjoinroot{G}{\E}{\mathcal{F}(\E)} \define 
  \group{G,F}:{E=i_{E,F}(E)}_{F\in\mathcal{F}(E),E\in\E}
  \]

  A finitely generated group $H$ is obtained from $G$ by
  \emph{adjoining roots $\mathcal{F}(\E)$ to $\E$}
  if $G<H$ and the inclusion map extends to a surjection
  \[
    \adjoinroot{G}{\E}{\mathcal{F}(\E)}\onto H
    \] 
  Let $\Sigma=\Sigma(\E,\mathcal{F}(\E))$ be a
  system of equations corresponding to the identification of $E$ with
  $i_{E,F}(E)$ for all $E$ and $F\in\mathcal{F}(E)$.  Then $H$ is a
  splitting group for $\Sigma$.  We call $H$ a cyclic extension of $G$
  because the relations are all of the form ``adjoin a root to $G$.''

  Most of the time the specific nature of $\mathcal{F}$ is immaterial,
  and we usually eliminate it from the notation.  To further compress
  the language used, sometimes we simply write that $H$ is obtained
  from $G$ by adjoining roots.
\end{definition}

\par A group is \term{conjugately separated abelian}, or \csa, if
maximal abelian subgroups are malnormal.  Let $\sim_Z$ be the relation
``is conjugate into the centralizer of''. This is an equivalence
relation as long as the group is \csa.  Two important consequences of
\csa\ are commutative transitivity and that every nontrivial abelian
subgroup is contained in a unique maximal abelian subgroup.

\par Commutative transitivity can occasionally be used to simplify
systems of equations. Suppose $H$ is obtained from $G$ by adjoining
roots $\mathcal{F}(\E)$ to $\E$.  Let $\eta$ be the
inclusion map.  We remove some redundancy by singling out a
subcollection of each of $\E$ and $\mathcal{F}(\E),$
and replacing each subcollection by a single element.  Fix some
$\sim_Z$ equivalence class $[E]$.  By conjugating we may assume that
each element of $[E]$ is a subgroup of $\cent_G([E])$.  Replace $[E]$ by
$\set{\cent_G([E])},$ and replace $\cup_{B\in[E]}\mathcal{F}(B)$ by
\[
\group{\cent_G([E]),F}:{B=i_{B,F}(B)}_{B\in[E],F\in\mathcal{F}(B)}^{\text{ab}}
\]
Then by commutative transitivity $H$ is a quotient of
\[
\adjoinroot{G}{\E}{\mathcal{F}(\E)}
\] 
Since limit groups are \csa\ we make this reduction without comment.
Since $\mathcal{F}(E)$ has a single element after this simplification,
we will generally use the less ostentatious notation $F(E)$ or just
$\sqrt{E}$. We will call a system of equations without any such
redundancy \term{reduced}.

\begin{definition}[Staircase]
  \label{def:seqadjunctionofroots}
  A \term{cyclic staircase} is a staircase, with tower $\G,$ equipped
  with a family families $\E$ of subgroups $\E_i$ of
  $\G(i),$ $(\G,\HH,\E),$ such that 
  \begin{itemize}
  \item $(\G(i-1),\HH(i),\G(i))$ is a flight; $\HH(i)$ is obtained from
    $\G(i-1)$ by adjoining roots to $\E_{i-1}$
  \item Each $E'\in\E_{i}$ in $\G(i)$ centralizes, up to
    conjugacy, the image of an element $E$ of $\E_{i-1}$.  If
    $E\in\E_{i-1}$ is mapped to $E'\in\E_{i}$ then
    we require that the image of $\cent_G(E)$ in $\cent_{G'}(E')$ be finite
    index.
  \end{itemize}

  To fix notation, the maps $\G(i)\into\G(i+1),$ $\G(i)\into\HH(i+1),$
  and $\HH(i+1)\onto\G(i+1)$ are denoted by $\eta_i,$ $\nu_i,$ and
  $\pi_{i+1},$ respectively.  The length of $\G$ is denoted
  $\Vert\G\Vert$.

\end{definition}

\par It will be handy to have a rough description of a staircase.  A
staircase of limit groups is
\begin{itemize}
\item \term{freely decomposable} if all $\G(i)$ are freely
  decomposable
\item \term{freely indecomposable} if all $\G(i)$ are freely
  indecomposable
\item \term{\qh--free} if no $\G(i)$ has a \qh\ subgroup
\item \term{mixed} if it has both freely decomposable and freely
  indecomposable groups, or, if freely indecomposable, has both groups
  with and without \qh\ subgroups. Otherwise it is \term{pure}.
\end{itemize}

\begin{definition}
  Let $(i_j)$ strictly increasing sequence of indices.  A staircase
  $(\mathcal{V},\mathcal{W}),$ such that $\mathcal{V}(j)=\G(i_j)$ and
  $\mathcal{W}(j)=\HH(i_j),$ with maps obtained by composing maps from
  $(\G,\HH),$ is a \term{contraction} of $(\G,\HH),$ and is \term{based
    on} $(i_j)$.
\end{definition}

\par To see that a contraction of a cyclic staircase is a staircase
consider the following diagram:
\begin{figure}[h]

\centerline{%
  \xymatrix{%
            & \HH(i_j+1)\ar@{->>}[d] & \cdots & \HH(i_{j+1}-1)\ar@{->>}[d] & \HH(i_{j+1})\ar@{->>}[d] \\
    \G(i_j)\ar@{^(->}[r]\ar@{^(->}[ur] & \G(i_j+1) & \cdots & \G(i_{j+1}-1)\ar@{^(->}[r]\ar@{^(->}[ur]  & \G(i_{j+1})
  }
}
\end{figure}

\par Each $E\in\E_i$ has finite index image in its
counterpart in $\E_{i+1},$ the image of $E$ in its
counterpart in $\E_{i_{j+1}}$ is finite index.  Extending an
abelian group by a finite index super-group multiple times can be
accomplished by extending once.

\par The need for contractions explains the restriction that each
$E\in\E_i$ contain a conjugate of the image of some
$E'\in\E_{i-1}$.  If this is not the case, then there is no
hope for the existence of contractions; we can't adjoin a root to an
element that isn't there.

\par A \term{segment} of a staircase is a contraction whose indices
are consecutive, that is $i_{j+1}-i_j=1$ for all $j$.

\par Let $\E$ be a collection of elements of a \csa\ group
$G$. We denote by $\Vert\E\Vert$ the number of $\sim_Z$
equivalence classes in $\E$.  The \term{complexity} of
$(\G,\HH,\E)$ is the triple
$\comp((\G,\HH,\E))\define(\betti(\G),\depthpc(\HH),\Vert\E\Vert)$.
Complexities are not compared lexicographically: $(b',d',e')\leq
(b,d,e)$ if $b'\leq b,$ $d'\leq d,$ and $e'\leq e+2(d-d')b$.  That
this defines a partial order follows easily from the definition.  The
inequality is strict if one of the coordinate inequalities is strict.
See Definition~\ref{def:depth} and the material thereafter for a
discussion of depth.  Another immediate consequence of the definition
of $\leq$ is that it is locally finite.\footnote{Fix $a$ and $b$.
  Then $\set{x}:{a\leq x\leq b}$ is finite.}

\par Let $(\G,\HH,\E)$ be a staircase.  The quantity
$\ninj((\G,\HH,\E))$ is the number of indices $i$ such that
$\HH(i)\onto\G(i)$ is \emph{not} an isomorphism.

\begin{theorem}
  \label{maintheorem}
  Let $(\G,\HH,\E)$ be a staircase.  There is a function
  $\ninj(\comp((\G,\HH,\E)))$ such
  that \[\ninj((\G,\HH,\E))\leq\ninj(\comp((\G,\HH,\E)))\]
\end{theorem}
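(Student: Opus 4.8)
The plan is to prove the equivalent statement that, for each complexity $c=(b,d,e)$, the quantity
\[
N(c)\define\sup\ninj((\G,\HH,\E))
\]
is finite, the supremum ranging over all staircases $(\G,\HH,\E)$ with $\comp((\G,\HH,\E))\leq c$; the bounding function sought in the theorem is then $c\mapsto N(c)$. Since the coordinates $\betti$, $\depthpc$ and $\Vert\E\Vert$ are nonnegative integers and the relation $\leq$ is locally finite, the order $<$ is well founded, and I would induct on $c$: assume $N(c')<\infty$ for all $c'<c$ and bound $\ninj$ for staircases of complexity at most $c$.

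First I would record how the moves of a flight act on the three coordinates. Adjoining roots to an abelian subgroup $E$ is an amalgam $G*_E\sqrt E$ with $[\sqrt E:E]<\infty$, and since a finite-index inclusion of abelian groups is a rational homology isomorphism, a Mayer--Vietoris computation shows that this move preserves $\betti$. Hence $\betti(\G(i))$ is constant along the tower and equal to $b$, and the only coordinates that can change are the depth $\depthpc$ and the centralizer count $\Vert\E\Vert$. The shape of the order $\leq$, which permits $e$ to grow by $2(d-d')b$ whenever the depth falls from $d$ to $d'$, is calibrated exactly so that one unit of lost depth pays for a bounded gain in the centralizer count; the content of the theorem is that when the depth can no longer drop, the centralizer count can no longer grow unboundedly either.

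Next I would normalize the staircase. Passing to the contraction supported on the non-injective indices yields a staircase---this is legitimate by the remark on composing finite-index enlargements that follows the definition of contraction---in which \emph{every} step $\HH(i)\onto\G(i)$ is a proper quotient and whose complexity has not increased; for such a staircase $\ninj$ equals its length $\Vert\G\Vert$. It therefore suffices to bound the length of a staircase of complexity at most $c$ in which every flight is a proper quotient. Suppose such a staircase were very long. I would color each step by the combinatorial type of the move: the \jsj\ (or \ajsj) decomposition of $\G(i)$, the distribution of the images of $\E_{i-1}$ among its rigid and \qh\ vertices, and the Scott data $\scott$ of any surface that is involved. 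Because the $\G(i)$ have first Betti number $b$ and bounded depth, the number of colors is bounded in terms of $c$ alone, so $\ramsey$ produces a long homogeneous segment in which all of these types agree. The freely decomposable and \qh--containing cases I would reduce to the freely indecomposable, \qh--free case by descending to the free factors, respectively vertex groups, of a Grushko, respectively \jsj, decomposition, exactly as in the previous paper in the sequence.

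The heart of the matter, and the step I expect to be hardest, is the analysis of a long homogeneous segment all of whose flights are proper quotients. I would argue that such a segment is impossible unless the depth drops: adjoining roots to a fixed pattern of centralizers and being forced each time into a proper quotient that is \emph{not} explained by a finite-index enlargement of one of those centralizers must, after a bounded number of homogeneous steps, strictly lower $\depthpc$---this is precisely where the uniqueness results for adjunction of roots from \cite{louder::stable} are invoked. Once the depth is seen to drop, I extract from the segment a contraction of strictly smaller complexity $c'<c$, whose length is at most $N(c')$ by induction; since the $\ramsey$ extraction and the passage to vertex groups each cost only a bounded amount---bounded by a function of the finitely many colors, hence of $c$---the length of the original staircase is bounded in terms of $c$, completing the induction. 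The delicate points I anticipate are: checking that $\depthpc$ is genuinely monotone under the flights, so that $d$ is a bona fide global invariant; bounding the number of colors purely in terms of $c$, so that the Ramsey threshold is a function of $c$; and verifying that the increase in $\Vert\E\Vert$ incurred when the depth drops stays inside the budget $2(d-d')b$ hard-wired into $\leq$.
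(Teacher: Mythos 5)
Your outline reproduces the paper's top-level architecture -- induction on the well-founded complexity order, segmentation of the tower into boundedly many pieces of constant decomposition type, and reduction to derived staircases whose smaller complexity pays for a bounded growth in $\Vert\E\Vert$ -- and several of your preliminary observations (constancy of $\betti$ along the tower, local finiteness of $\leq$, reduction of the freely decomposable and \qh\ cases to staircases of strictly smaller first Betti number) are correct and match the paper. But the step you yourself flag as the heart of the matter is where the proposal has a genuine gap, and it also mislocates the mechanism. You assert that a long homogeneous segment of proper quotients ``must, after a bounded number of homogeneous steps, strictly lower $\depthpc$,'' invoking the uniqueness results of \cite{louder::stable}. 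In the paper the depth of the groups in the tower does \emph{not} drop as you walk along a homogeneous segment; the drop is structural, not temporal. The actual argument passes from the staircase $(\G,\HH,\E)$ to the staircases $(\V_j,\W_j,\E^j)$ of \emph{vertex groups} of the aligned \jsj\ decompositions; since $\W_j(i)$ is a vertex group of the principle cyclic \jsj\ of $\HH(i)$, it sits one level deeper in the analysis lattice and so $\depthpc(\W_j)<\depthpc(\HH)$, at the cost of enlarging $\E^j$ by at most the $2\betti(\G)$ incident edge groups -- which is exactly what the budget $e+2(d-d')b$ in the order is calibrated for. The induction is then applied to these derived staircases, not to a contraction of the original one as you propose (a contraction lives on the same groups and cannot have smaller depth).

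What is missing to make that reduction legitimate is the content of Sections 3--5 of the paper, which your sketch does not supply or replace: (i) Theorem~\ref{thm:alignment}, which uses the monotone complexity $\jcomp$ to show that within a segment the inclusions $\G(i)\into\G(i+1)$ align the \jsj\ decompositions vertex-by-vertex and edge-by-edge; (ii) the notion of an \emph{extension} of a staircase and Lemma~\ref{existresolutions}, which lets one assume the staircase is $\geq_{\sqrt{}}$--maximal, without which Lemma~\ref{mainlemma} is false; and (iii) Lemma~\ref{mainlemma} itself, proved by the graphs-of-spaces immersion and resolving machinery of Section~\ref{hyptoell}, which establishes that each $\W_j(i+1)$ really is obtained from $\V_j(i)$ by adjoining roots, that the $\E^j$ satisfy the required bound, and -- crucially for closing the induction -- that injectivity of $\pi_l$ on all vertex groups forces $\HH(l)\onto\G(l)$ to be an isomorphism. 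Your appeal to Ramsey's theorem is also coarser than the paper's: the paper colors \emph{triples} $i<j<k$ according to whether $\jsj^*_{\HH(j)}(\G(j))\cong\jsj_{\HH(k)}(\G(j))$ and bounds the blue sets by $2^{\Vert\E\Vert}$ via Lemma~\ref{lem:numberofdecompositions}; this is needed because the relevant decomposition of $\G(j)$ depends on the later group $\HH(k)$, a dependence your one-step coloring does not capture. Without these ingredients the proposal is a plausible plan rather than a proof.
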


\begin{remark}
  Although it would be nice to assign a complexity $c()$ to a limit
  group such that if, in a flight $(G,H,G'),$ $c(G)=c(G'),$ then
  $H\onto G'$ is an isomorphism, this doesn't seem possible, and the
  approach taken in this paper requires that complexities be computed
  and compared in context.
\end{remark}

\subsection*{Acknowledgments}

\par The author thanks Mladen Bestvina, Mark Feighn, and Zlil Sela for
many discussions related to this paper.

\section{Complexities of sequences}

\par The main object which enables this analysis of adjoining roots is
the \jsj\ decomposition, a device for encoding families of splittings
of groups.  This exposition borrows from~\cite{bf::lg,sela::jsj}.  A
\gad, or \term{generalized abelian decomposition} of a group $G$ is a
finite graph of groups decomposition over abelian edge groups such
that every vertex group is marked as one of \term{rigid},
\term{abelian}, or \term{\qh}, where by \qh\ we mean is the
fundamental group of a compact surface with boundary possessing two
intersecting essential simple closed curves.  Moreover, edge groups
adjacent to a \qh\ vertex group must be conjugate to boundary
components of the surface.  If $A$ is an abelian vertex group, the
\term{peripheral} subgroup of $A$ is the subgroup of $A$ which dies
under every map $A\to\zee$ killing all incident edge groups.

\par We say that two \gad's of a limit group are \term{equivalent} if
they have the same elliptic subgroups.  A splitting is \term{visible}
in a \gad\ $\Delta$ if it corresponds to cutting a \qh\ vertex group
along a simple closed curve, a one-edged splitting of an abelian
vertex group in which the peripheral subgroup is elliptic, or is a one
edged splitting corresponding to an edge from an equivalent
decomposition.  If $\Delta$ is a \gad, then $g\in G$ is
$\Delta$--elliptic if elliptic in every one-edged splitting of $G$
visible in $\Delta$.  Let $\elliptic(\Delta)$ be the set of
$\Delta$--elliptic elements.

\par Let $G$ be a freely indecomposable finitely generated group, and
let $\mathcal{C}$ be a family of one-edged splittings of $G$ such that
\begin{itemize}
  \item edge groups are abelian,
  \item noncyclic abelian subgroups are elliptic.
\end{itemize}

\par The main construction of \jsj\ theory is that given a family of
splittings $\mathcal{C}$ satisfying these conditions, there is a \gad\
$\Delta$ such that
$\elliptic(\Delta)=\cap_{C\in\mathcal{C}}\elliptic(C)$.

\par An abelian \jsj\ decomposition of $G$ is a \gad\ $\ajsj(G)$ such
that the set of {\ajsj--elliptic} elements corresponds to the
collection of all one-edged splittings satisfying the bullets above.
The existence of a \jsj\ decomposition is somewhat subtle, as one
needs to bound the size of a \gad\ arising in this
way~\cite[Theorem~3.9]{sela::dgog1}. If $G$ is a nonelementary freely
indecomposable limit group then $G$ has a nontrivial \jsj\
decomposition. If $G$ is elementary, the \jsj\ is a point.

\par In this paper we are interested in the principle cyclic \jsj\
decomposition, which is the \jsj\ associated to the family of
principle cyclic splittings.

\begin{definition}[\cite{sela::dgog1}]
  \label{def:principlecyclicsplitting}
  A one-edged splitting over a cyclic subgroup is \term{inessential}
  if at least one vertex group is cyclic, and is \term{essential}
  otherwise.  A \term{principle cyclic} splitting of a limit group is
  an essential one-edged splitting $G\cong A*_CB$ or $G\cong A*_C,$
  over a cyclic subgroup $C,$ such that either $\cent_G(C)$ is cyclic or
  $A$ is abelian.

  \par The \term{principle cyclic \jsj} of a freely indecomposable
  limit group is the \jsj\ decomposition corresponding to the family
  of principle cyclic splittings.  We denote the principle cyclic
  \jsj\ by $\jsj(G)$.

  \par Let $\E\subset G$.  The \term{principle cyclic \jsj\ of
    $G,$ relative to $\E$} is a \jsj\ decomposition
  corresponding to the family of all principle cyclic splittings of
  $G$ such that each member of $\E$ is elliptic.  We denote
  the relative \jsj\ by $\jsj(G;\E)$.  A \term{principle
    cyclic decomposition} is simply a relative principle cyclic
  \jsj\ for some collection $\E$.
  
  \par The \term{restricted principle cyclic \jsj}, or
  \term{restricted \jsj} for short, of a freely indecomposable limit
  group $G$ \emph{with} \qh\ subgroups is the relative
  \jsj\ decomposition associated to the set of principle cyclic
  splittings whose edge groups are hyperbolic in some other principle
  cyclic splitting.  It is obtained from the \jsj\ by collapsing all
  edges not adjacent to some \qh\ vertex group.  If $G$ doesn't have
  \qh\ vertex groups, then the restricted \jsj\ is just the principle
  cyclic \jsj.  The restricted principle cyclic \jsj\ is denoted by
  $\rjsj(G)$
\end{definition}

\par That limit groups have principle cyclic splittings
is~\cite[Theorem~3.2]{sela::dgog1}.  It need not be the case that
every splitting visible in the principle cyclic \jsj\ is principle;
for instance, a boundary component of a \qh\ vertex group may be the
only edge attached to a cyclic vertex group.  The splitting
corresponding to the boundary component is not essential, but is
certainly visible in the principle cyclic \jsj.

\par In this paper we work primarily with the principle cyclic
\jsj\ of $G,$ indicated by $\jsj(G),$ and the \rjsj.  If $\Delta$ is a
graph of groups decomposition then $T_{\Delta}$ is the Bass-Serre
tree corresponding to $\Delta$.

\par We can give a more explicit description of the principle cyclic
\jsj.  Consider the abelian \jsj\ of a limit group $G$.  Clearly all
\qh\ vertex groups of $\ajsj(G)$ appear as vertex groups of $\jsj(G)$.
If $A$ is an abelian vertex group of $\ajsj(G)$ with noncyclic
peripheral subgroup, since there is no principle cyclic splitting of
$G$ over a subgroup of $A,$ the subgroup of $G$ generated by $A$ and
conjugates of rigid vertex groups having nontrivial intersection with
$A$ must be elliptic in $\jsj(G)$.  If $R$ is a rigid vertex group of
$G$ and an edge group $E$ incident to $R$ has noncyclic centralizer in
$R,$ then the subgroup of $G$ generated by $R$ and any conjugate of a
rigid vertex group $R'$ intersecting $R$ in a nontrivial subgroup of
$E$ is also elliptic.  From this we see that the principle cyclic
\jsj\ of $G$ must have the following form:
\begin{itemize}
  \item Every abelian vertex group has cyclic peripheral subgroup.  If
    $R$ is adjacent to an abelian vertex group $A,$ $E$ the edge
    group, then $R$ does not have an essential one-edged splitting
    over $E$ in which each element of $\E(R)$ is elliptic.
  \item If an edge $e$ incident to a rigid vertex group $R$ has
    noncyclic centralizer in $R,$ then the edge is attached to a
    boundary component of a \qh\ vertex group.
  \item If two edges incident to a rigid vertex group $R$ have the
    same centralizer in $R,$ then they are both incident to
    \qh\ vertex groups, and their centralizer in $R$ is noncyclic.
\end{itemize}

The \jsj\ decomposition of a limit group, be it abelian or principle
cyclic, is only unique up to morphisms of graphs of groups preserving
elliptic subgroups. Some principle cyclic \jsj's are more convenient
to work with than others, and we assume throughout that
\begin{itemize}
  \item Edge groups not adjacent to \qh\ vertex groups are closed
    under taking roots, and edge maps of edge groups into \qh\ vertex
    groups are isomorphisms with the corresponding boundary components.
  \item There are no inessential splittings visible in the \jsj, other
    than from valence one cyclic vertex groups attached to boundary
    components of \qh\ vertex groups.
\end{itemize}

\par Let $R$ be a rigid vertex group of the full abelian \jsj\ of a
limit group $G,$ and let $\bar{R}$ be the subgroup of $G$ generated by
$R$ and all elements with powers in $R$.  A decomposition with the
properties above can be thought of as the \jsj\ decomposition
associated to the family of principle cyclic splittings in which all
$\bar{R},$ $R$ a vertex group of the abelian \jsj, are elliptic.

\par In general, there are infinitely many principle cyclic
decompositions of a limit group, all obtained from the principle
cyclic \jsj\ by folding, cutting \qh\ vertex groups along simple
closed curves, and collapsing subgraphs.

\begin{lemma}
  \label{lem:numberofdecompositions}
  Let $G$ be a limit group and $\E\subset G$ a collection of
  elements of $G$.  Then there are at most $2^{\Vert\E\Vert}$
  equivalence classes of principle cyclic decompositions in which some
  elements of $\E$ are elliptic.
\end{lemma}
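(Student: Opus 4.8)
The plan is to show that the equivalence class of a principle cyclic decomposition in which a subcollection of $\E$ is required to be elliptic depends only on \emph{which} $\sim_Z$ equivalence classes of $\E$ are made elliptic, and then to count subsets. Since there are exactly $\Vert\E\Vert$ such classes, there are $2^{\Vert\E\Vert}$ subsets, and this will bound the number of equivalence classes of decompositions. Concretely, I would show that the assignment sending a subset of $\sim_Z$ classes to the equivalence class of the relative \jsj\ relative to representatives of those classes is well defined, and that every principle cyclic decomposition in which some elements of $\E$ are elliptic is equivalent to one in its image; the bound is then immediate.

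The key reduction, which I expect to be the main obstacle, is that ellipticity of an element in any principle cyclic splitting is an invariant of its $\sim_Z$ class. Since $G$ is \csa, the centralizer of any nontrivial element is the unique maximal abelian subgroup $M$ containing it, and $g\sim_Z g'$ precisely when $g$ and $g'$ are conjugate into a common such $M$. As conjugation preserves ellipticity, it suffices to treat $g,g'\in M$. In a principle cyclic splitting the edge group is cyclic and noncyclic abelian subgroups are elliptic; hence if $M$ is noncyclic it is elliptic and every nontrivial element of $M$ fixes a point, while if $M=\langle m\rangle$ is cyclic then in the Bass--Serre tree either $m$ is elliptic, in which case all of its powers are, or $m$ is hyperbolic with an axis on which every nonzero power acts by a nontrivial translation. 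In all cases the nontrivial elements of $M$ are simultaneously elliptic or hyperbolic, so ellipticity descends to $\sim_Z$ classes.

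Granting this, the remaining steps are bookkeeping built on the canonicity of the relative \jsj. Given a subcollection $S\subseteq\E$, the family of principle cyclic splittings in which every element of $S$ is elliptic depends only on the set of $\sim_Z$ classes meeting $S$: by the previous step, replacing $S$ by any collection representing the same classes leaves the defining family unchanged. Since $\jsj(G;S)$ is by definition the \jsj\ associated to this family, and two \gad's are equivalent when they have the same elliptic subgroups, the equivalence class of $\jsj(G;S)$ is a function of the subset of $\sim_Z$ classes of $\E$ that $S$ represents. Any principle cyclic decomposition in which some elements of $\E$ are elliptic is a relative \jsj; letting $S$ be the elements of $\E$ elliptic in it, the decomposition is equivalent to $\jsj(G;S)$. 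As $S$ ranges over subcollections of $\E$ its class determines at most $2^{\Vert\E\Vert}$ possibilities for the equivalence class, giving the stated bound.
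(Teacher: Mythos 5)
Your proof is correct and takes essentially the same route as the paper: the paper's entire proof consists of the single observation that ellipticity in a principle cyclic splitting is an invariant of the $\sim_Z$ class (your ``key reduction''), with the counting of subsets of classes left implicit. Your version simply fills in that observation via \csa\ and the Bass--Serre tree and makes the bookkeeping---identifying each decomposition with the relative \jsj\ of the subcollection of $\sim_Z$ classes it renders elliptic---explicit.
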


\begin{proof}
  If $E\in\E$ is elliptic, then so is any $E'\in\E$
  such that $E\sim_Z E'$.
\end{proof}

\par We need to adapt the definition of the analysis lattice of a
limit group given in \cite[\S4]{sela::dgog1} to the inductive proof
given in section~\ref{finishargument}.  A limit group is
\term{elementary} if it is abelian, free, or the fundamental group of
a closed surface.

\begin{definition}[Principle cyclic analysis lattice]
  The \term{principal cyclic analysis lattice} of a limit group $G$ is
  the rooted tree of groups whose levels are defined as follows:
  \begin{itemize}
    \item[$0$:] $G$
    \item[$\frac{1}{2}$:] The free factors of a Grushko
      decomposition of $G$.
    \item[$1$:] The vertex groups at level $1$ are the vertex
      groups of the \rjsj.  
    \item[$n\left(\frac{1}{2}\right)$:] Rinse and repeat,
      incrementing the index by one each time.
    \end{itemize}
  If an elementary limit group is encountered, it is a terminal leaf
  of the tree.
\end{definition}

\begin{definition}
  \label{def:depth}
  The \term{depth} of a limit group $H$ is the number of levels in its
  principle cyclic analysis lattice, and is denoted $\depthpc(H)$.

  The \term{depth} of a staircase $(\G,\HH,\E)$ is
  $\max\set{\depthpc(\HH(i))},$ and is is denoted
  $\depthpc((\G,\HH,\E))$.  The \term{first betti number} of
  $(\G,\HH,\E)$ is the first betti number of $\G(1)$.
\end{definition}

\par It is not always necessary to refer to the family $\E,$
so we suppress it from the notation when its size is irrelevant.  That
the depth is well defined is a consequence of
Theorem~\ref{analysislatticebound}.

\begin{theorem}
  \label{analysislatticebound}
  The depth of the principle cyclic analysis lattice of a limit group $L$
  is controlled by its rank.
\end{theorem}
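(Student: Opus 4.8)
The plan is to prove the statement by induction, exhibiting a complexity $\kappa(G)$ for freely indecomposable limit groups that is bounded above by a function of $\rk(G)$ and that strictly decreases as one descends a single level of the analysis lattice, while simultaneously showing that the rank of every group appearing at the next level is controlled by $\rk(G)$. Once both are in place, the number of levels below the root is bounded by the number of complexity values lying below $\kappa(L)$ together with the branching, and both of these are functions of $\rk(L)$ alone.

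First I would dispose of the Grushko (half-integer) levels. By Grushko's theorem the rank is additive across a free decomposition, so the freely indecomposable factors have rank at most $\rk(G)$ and there are at most $\rk(G)$ of them; the free factors are elementary and terminate immediately. Thus a half-integer step replaces a group by finitely many freely indecomposable groups of no larger rank, and it suffices to bound the depth contributed by the freely indecomposable, non-elementary case.

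Next, for $G$ freely indecomposable and non-elementary, I would invoke the fact recorded above that $G$ has a \emph{nontrivial} principle cyclic \jsj; hence every vertex group of the \rjsj\ is a \emph{proper} subgroup of $G$. The abelian and \qh\ vertex groups are elementary and contribute at most one further level, so the recursion only propagates through the rigid, non-\qh\ vertex groups. To control these I would use the bound on the size of the \jsj\ in terms of rank from \cite[Theorem~3.9]{sela::dgog1}: the number of vertices and edges of the \rjsj\ is a function of $\rk(G)$, and since all edge groups are cyclic and limit groups are coherent, a Mayer--Vietoris computation bounds $\rk(V)$ and $\betti(V)$ for each vertex group $V$ in terms of $\rk(G)$ and this size, hence in terms of $\rk(G)$ alone. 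This yields the required rank control at every level.

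The main obstacle is the strict decrease of the complexity across a single \rjsj\ step, precisely because a rigid vertex group can have the same rank, and even the same first betti number, as $G$. The plan is to take for $\kappa$ not the rank but an invariant read off the \jsj\ itself --- for instance a lexicographic pair recording the total $-\chi$ of the \qh\ pieces together with the number of non-elementary rigid vertices --- and to show that passing from $G$ to a rigid vertex group $R$ strictly lowers it. The point is that $R$, being rigid in $\jsj(G)$, cannot recover in its own \jsj\ the splittings it has already absorbed: the essential principle cyclic splittings of $R$ compatible with its incident edge groups are strictly fewer, so $\kappa(R)<\kappa(G)$. Establishing this strict drop is the heart of the argument, and is where the canonicity of the \jsj\ together with the accessibility bound of \cite[Theorem~3.9]{sela::dgog1} does the real work; once it is in hand, local finiteness of the complexity order bounds the number of levels by a function of $\rk(L)$, completing the induction.
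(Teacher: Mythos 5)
Your reduction to the freely indecomposable, non-elementary case and your use of the accessibility bound to control the size and branching of each \rjsj\ are fine; they dispose of the Grushko levels and of the abelian and \qh\ vertex groups. But the argument fails exactly where you yourself locate its heart: the proposed complexity $\kappa(G)=(\text{total }|\chi|\text{ of the \qh\ pieces},\ \#\text{ non-elementary rigid vertices})$ does \emph{not} strictly decrease when you pass from $G$ to a rigid vertex group $R$. The dangerous configuration --- and the one the paper's proof is entirely about --- is a branch $G_0>G_1>\dotsb$ in which every $G_i$ is freely indecomposable, has no \qh\ vertex groups and no noncyclic abelian vertex groups, and $\jsj(G_i)$ has exactly one nonabelian vertex group, namely $G_{i+1}$. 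Along such a branch your $\kappa$ is constantly $(0,1)$. Your justification that $R$ ``cannot recover the splittings it has absorbed'' conflates the \jsj\ of $R$ \emph{relative to its incident edge groups} (which is indeed trivial, by rigidity) with the absolute \rjsj\ of $R$ that the analysis lattice uses at the next level; the latter can be nontrivial at every stage, and neither the rank nor the first betti number need drop (for instance an \hnn\ extension $G_i=G_{i+1}*_C$ over a cyclic group can have $\betti(G_i)=\betti(G_{i+1})$). So no invariant read off the shape of the \jsj\ alone is established, or even plausibly claimed, to do the job here.

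The paper closes precisely this gap by a different mechanism: it observes that $L$ admits a strict resolution of length at most $6\rk(L)$ \cite{louder::strict,houcine-2008}, and then argues (as in the second paper of the series) that a long branch of the problematic form would force a longer strict resolution. Some substitute for this --- or an actual proof that your $\kappa$, or a refinement of it, decreases in the single-rigid-vertex case --- is needed before your induction can run. A secondary caveat: Mayer--Vietoris over cyclic edge groups controls $\betti(V)$ for a vertex group $V$, not $\rk(V)$; bounding the rank of vertex groups of a cyclic splitting in terms of the rank of the ambient group is itself a nontrivial matter, so if your induction genuinely needs rank control at every level rather than betti-number control, that step also requires justification.
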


\begin{proof}
  We only need to worry about the possibility that the principle
  cyclic analysis lattice contains a long branch of the form
  $G_0>G_1>\dotsb,$ where each $G_i$ is freely indecomposable, has no
  \qh\ vertex groups, no noncyclic abelian vertex groups, and
  $\jsj(G_i)$ has only one nonabelian vertex group $G_{i+1}$.  After
  observing \cite{louder::strict,houcine-2008} that $L$ has a strict
  resolution of length at most $6\rk(L),$ the proof is identical
  to~\cite[Theorem~\ref{STABLE-lem:depthbound}]{louder::stable}.
\end{proof}

\par We motivate our proof of Theorem~\ref{maintheorem} and the
previous definition with an example.

\begin{example}
  Suppose that $\G(i)$ has a one-edged \jsj\ decomposition with two
  nonabelian vertices for all $i$.  Since a limit group has a
  principle cyclic splitting, the one-edged splitting of $\G(i)$ must
  be of the form $\G_1(i)*_{\group{e_i}}\G_2(i)$.  By
  Lemma~\ref{actshyperbolically}, if $H$ is obtained from $G$ by
  adjoining roots, then $G$ acts hyperbolically in all splittings of
  $H$. In particular, every vertex group of $H$ contains a vertex
  group of $G$. If the \jsj\ of $H$ was a loop, then the map to the
  underlying graph kills $G,$ but since the map $G\to H$ is almost
  onto on homology, this cannot happen. Thus each $\HH(i)$ has a
  one-edged \jsj\ decomposition $\HH_1(i)*_{\group{f_i}}\HH_2(i)$.

  The triple $\G(i-1)\into\HH(i)\onto\G(i)$ has the following form: The
  pairs $(\G_j(i-1),\group{e_{i-1}})$ map to the pairs
  $(\G_j(i),\group{e_{i}})$ and $(\HH_j(i),\group{f_i}),$
  and the maps $\eta_i,$ $\nu_i,$ and $\pi_i$ respect these one-edged
  splittings.  We'll show later that in fact $\HH_j(i)$ is obtained
  from $\G_j(i-1)$ by adjoining roots.  By work
  from~\cite{louder::stable} $\G_j(i)$ is obtained from the image of
  $\HH_j(i)$ by iteratively adjoining roots to the incident edge group
  (See Appendix~\ref{appendix_strict}).  Let $\G'_j(i)$ be the image
  of $\HH_j(i)$ in $\G_j(i)$.  Now consider the staircase
  $(\G'_j(i),\HH_j(i))$.  The sequence $\HH_j$ has strictly lower depth
  than $\HH$.

  By induction on $\comp,$ there is an upper bound on the number of
  indices such that $\HH_j(i)\to\G(i)$ is not injective, and for at
  most twice that bound, both maps $\HH_j(i)\to\G_j(i)$ are injective.
  Then $\HH(i)\onto\G(i)$ is strict for such indices.  Since every Dehn
  twist in $\group{f_i}$ pushes forward to a Dehn twist in
  $\group{e_{i+1}},$ $\pi_i$ is an isomorphism.
\end{example}

\section{Aligning \jsj\ decompositions}


\par Let $G$ be a finitely generated group acting on a simplicial tree
$T$ minimally and without inversions.  It is a standard fact that the
quotient $T/G$ is the underlying graph of a graph of groups
decomposition of $G$.  If $H<G$ is a finitely generated subgroup, there
is a minimal subtree $S\subset T$ fixed (setwise) by $H,$ and the
action of $H$ on $S$ endows $H$ with a graph of groups
decomposition.  Additionally, there is an induced map of quotient
graphs $S/H\to T/G$.

\par We are interested in the following problem: Suppose $G,$ $H,$
$T,$ and $S$ are as above, $G$ and $H$ freely indecomposable limit
groups, $T$ the Bass-Serre tree corresponding to the principle cyclic
\jsj\ of $H$.  We say that $G$ and $H$ are aligned if $S/H\to T/G$ is
an isomorphism of graphs and $S/H$ is the underlying graph of the
principle cyclic \jsj\ of $H$. Give a simple computable criterion
which guarantees that $G$ and $H$ are aligned.

\par As long as $G^{ab}\to H^{ab}$ is virtually onto, we are able to
answer this question in a reasonable way, constructing a
(monotonically decreasing) complexity, equality of which will
guarantee alignment of \jsj s.  The properties of the alignment are
then used to construct graphs of spaces and maps between them which
resemble Stallings' immersions. The main idea of this section is that
an inclusion as above must either ``tighten up'' the Grushko/\jsj\,
becoming simpler in a quantifiable way, or can be written as a map of
graphs of groups respecting the \jsj\ decompositions.

\par Let $T$ be the Bass-Serre tree corresponding to the principle
cyclic \jsj\ of $H,$ let $S$ be the minimal subtree for $G$.  The
quotient $S/G$ is finite and it follows from the definitions that the
induced graphs of groups decomposition of $G$ is principle.  For
convenience, we usually conflate underlying graphs and graphs of
groups decompositions.  Let $\Delta_H=T/H$ and $\Delta_G=S/G$ be the
underlying graphs, and let $\eta_{\#}$ be the induced map.  We label a
vertex $v$ of $\Delta_G$ by the corresponding label on $\eta_{\#}(v),$
unless $G_v$ is abelian, in which case we label it abelian anyway.
The map $\eta_{\#}$ is well behaved:

\begin{itemize}
\item If $v$ is rigid then the edge groups adjacent to $v$ have
  nonconjugate centralizers in $G_v$ unless they are all attached to
  boundary components of \qh\ vertex groups.\mnote{careful}
\item Let $B$ be a maximal connected subgraph of $\Delta_G$ such that
  every vertex is abelian.  Commutative transitivity implies that
  $G_B$ is abelian, and the fact that all noncyclic abelian subgroups
  of $H$ are elliptic in $T_H$ implies that $B$ is a tree. 
\item If $v$ is abelian and $\eta_{\#}(v)$ is nonabelian, then
  $\eta_{\#}(v)$ is rigid.
\item A valence one cyclic $v$ is adjacent to a \qh\ $w$.  This follows
  from the assumption that the only edge groups of $H$ allowed to be
  not closed under taking roots are adjacent to \qh\ vertices.
\end{itemize}


\begin{lemma}
  \label{actshyperbolically}
  Let $\eta\colon G\to H$ be a homomorphism of freely indecomposable
  limit groups such that $H^1(H,\zee)\to H^1(G,\zee)$ is injective.
  Then $G$ is hyperbolic in every essential one-edged abelian
  splitting of $H$.

  If $R$ is a nonabelian vertex group of a \gad\ $\Delta_H$ of $H,$
  then $G$ intersects a conjugate of $R$ in a nonabelian subgroup. If
  $\Delta_G$ is the induced decomposition of $G,$ and there is only
  one nonabelian vertex group $R'$ of $\Delta_G$ mapping to $R,$ then
  the map on underlying graphs is a submersion at $R'$.
\end{lemma}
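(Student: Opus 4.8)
The plan is to prove the three assertions in order, all flowing from the hypothesis that $H^1(H,\zee)\to H^1(G,\zee)$ is injective, which dualizes to the statement that $G^{\mathrm{ab}}\to H^{\mathrm{ab}}$ is virtually onto (the image has finite index). First I would establish hyperbolicity. Suppose for contradiction that $G$ is \emph{elliptic} in some essential one-edged abelian splitting $H=A*_C B$ or $H=A*_C$. Then $G$ is conjugate into a vertex group, so $G$ lies in $A$ (up to conjugacy). The key observation is that an essential splitting gives a nontrivial map $H\to\zee$ (the HNN case, sending the stable letter to a generator) or more generally detects a class in $H^1(H,\zee)$ that is nonzero but whose restriction to $G$ vanishes: since $G\subset A$, any character of $H$ pulled back from the underlying-graph map $H\to\pi_1(\text{graph})$ kills $G$. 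I would make this precise by producing an explicit nonzero $\phi\in H^1(H,\zee)$ restricting to $0$ on $G$, directly contradicting injectivity of $H^1(H,\zee)\to H^1(G,\zee)$. For the amalgam case one uses that $C$ is abelian and the splitting essential (neither factor equals $C$) to build such a character; the abelian/edge structure is what guarantees $H^1$ sees the splitting.

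Next I would handle the intersection claim. Let $R$ be a nonabelian vertex group of a \gad\ $\Delta_H$, with Bass–Serre tree $T_{\Delta_H}$ and fixed vertex $\tilde R$. By the hyperbolicity just proved, $G$ acts without a global fixed point on $T_{\Delta_H}$, hence acts with a minimal invariant subtree $S$ and inherits an induced decomposition $\Delta_G=S/G$. I would argue that $G$ must meet a conjugate of $R$ in a \emph{nonabelian} subgroup as follows: the induced decomposition $\Delta_G$ of the freely indecomposable, nonelementary limit group $G$ has a nonabelian vertex group (otherwise $G_B$ is abelian for the abelian subgraph argument and, since all noncyclic abelians are elliptic, $G$ would be abelian or decompose as a nontrivial free/abelian product contradicting free indecomposability together with nonelementarity). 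Each nonabelian vertex group of $\Delta_G$ is the intersection of $G$ with a conjugate of a vertex group of $\Delta_H$; edge groups being abelian, a nonabelian such intersection cannot land in an abelian vertex group of $\Delta_H$, so it lands in a conjugate of some nonabelian $R$. To pin it to the \emph{specified} $R$ rather than merely some nonabelian vertex group, I would use the virtual surjectivity on abelianizations: the composite $G\to H\to\pi_1(\Delta_H)\to H_1(\Delta_H)$ together with the homology of $H$ forces $G$ to surject (virtually) onto each handle, so $G$ cannot avoid the $R$-orbit entirely.

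Finally, for the submersion conclusion, suppose $R'$ is the \emph{unique} nonabelian vertex group of $\Delta_G$ mapping to $R$ under $\eta_\#$. A map of graphs of groups is a submersion at $R'$ precisely when every edge of $\Delta_H$ incident to $R$ is hit by an edge of $\Delta_G$ incident to $R'$, i.e.\ the local link map $\link(R')\to\link(R)$ is surjective. I would prove surjectivity by the same homological pigeonhole: each edge $e$ at $R$ carries an abelian edge group and cutting along $e$ produces a class in $H^1$; were some edge at $R$ not covered by $R'$, one could collapse everything except that edge and produce a nonzero character of $H$ whose restriction to the \emph{image of} $G$ factors through a collapse that trivializes it on $R'$—again contradicting injectivity of $H^1(H,\zee)\to H^1(G,\zee)$, now applied relative to the induced decompositions. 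Uniqueness of $R'$ is what lets me conclude that all of $\link(R)$ must be accounted for at the single vertex $R'$ rather than spread across several preimages.

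\emph{Main obstacle.} The delicate point is the submersion step, specifically converting ``$G$ is virtually onto on $H^1$'' into the \emph{local} surjectivity $\link(R')\twoheadrightarrow\link(R)$ at a single vertex. Global homological surjectivity does not obviously localize, and one must use the precise structure of principle cyclic \gad's—abelian edge groups, the classification of visible splittings, and the fact that edges not at \qh\ vertices are closed under roots—to rule out the possibility that an uncovered edge of $R$ is compensated for by homology carried elsewhere in $\Delta_G$. Getting the bookkeeping right so that the uncovered edge genuinely yields a character killing the image of $G$ is the crux; the hyperbolicity and nonabelian-intersection steps are comparatively routine once the correct character is exhibited.
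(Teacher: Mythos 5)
Your first paragraph reproduces the paper's argument for the first claim: if $G$ is elliptic in an essential one-edged abelian splitting, one builds a surjection $H\onto\zee$ killing $G$ (immediate for an \hnn\ extension; for an amalgam one uses that each vertex group of an essential splitting maps onto $\zee$ killing the edge group, and extends by zero over the factor containing $G$), contradicting injectivity of $H^1(H,\zee)\to H^1(G,\zee)$. The gaps are in the second and third claims, where the paper's mechanism is not homological localization but a reduction back to the first claim.

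For the nonabelian-intersection claim, your observation that $\Delta_G$ has \emph{some} nonabelian vertex group landing in \emph{some} nonabelian vertex of $\Delta_H$ is fine, but the step ``virtual surjectivity onto each handle, so $G$ cannot avoid the $R$-orbit'' is a non sequitur: surjecting onto $H_1$ of the underlying graph says nothing about which vertex orbits the minimal subtree of $G$ meets (take $\Delta_H$ a two-vertex amalgam, whose underlying graph has trivial $H_1$). The paper instead modifies $\Delta_H$ by folding edges at $R$ with conjugate centralizers and pulling the centralizers of the incident edge groups across their edges --- this absorbs the case, which you do not address, where $G$ meets conjugates of $R$ only in nontrivial abelian subgroups --- observes that if the conclusion fails then $S/G\to T/H$ misses the vertex $R$, collapses all edges not adjacent to $R$, and concludes that $G$ is elliptic in an essential one-edged splitting, contradicting the first claim. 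The same reduction disposes of the submersion statement: an edge at $R$ missed by the induced decomposition means the minimal subtree of $G$ omits that edge orbit, so $G$ is elliptic in the one-edged splitting that edge defines, again contradicting the first claim. This is precisely the ``localization'' difficulty you flag as the main obstacle; it is resolved not by chasing characters through collapses but by the elementary Bass--Serre fact that a subgroup whose minimal subtree omits an edge orbit is elliptic in the corresponding one-edged splitting. As written, your proofs of the second and third claims are incomplete.
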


\begin{proof}
  Claim: If $G$ acts elliptically in some essential one-edged
  splitting then there is a map $H\onto\zee$ which kills $G$.  If the
  one-edged splitting is an \hnn\ extension the claim is clear.  If
  not, then both vertex groups of the amalgam have a map onto $\zee$
  which kills the incident edge group.

  To see the second half, suppose not, and let $\Delta'_H$ be the
  decomposition of $H$ obtained by conjugating edge maps to $R$ so
  that all incident edges either have the same or nonconjugate
  centralizers, and folding together edges of the conjugated
  decomposition which have the centralizers. Then pull all the
  centralizers of incident edge groups across the edge they
  centralize. If $T$ is the tree for $\Delta'_H,$ and $S$ is the
  minimal $G$--invariant subtree, then the map $S/G\to T/H$ clearly
  misses the vertex corresponding to $R$. Let $\Delta''$ be the
  decomposition of $H$ obtained by collapsing all edges not adjacent
  to $R$. Then $G$ is elliptic in $\Delta''$. The first part provides
  a contradiction.

  If the map is not a submersion on the level of $\Delta_H,$ then the
  of graphs of groups $\Delta'_G\to\Delta'_H$ is not a submersion onto
  $R$ either, and there is an edge incident to $R$ missed by
  $\Delta'_G$. This edge represents an essential splitting of $H,$ and
  so we again have a contradiction.
\end{proof}


\begin{definition}[Complexity of \jsj s]
  \label{def:compjsj}
  Let $G$ be a finitely generated freely indecomposable limit group
  with principle cyclic decomposition
  $G=\Delta(\mathcal{R},\mathcal{Q},\mathcal{A},\E),$ where
  each $R\in\mathcal{R}$ is rigid, each $Q\in\mathcal{Q}$ is \qh, each
  $A\in\mathcal{A}$ is finitely generated abelian, and each
  $E\in\E$ is an infinite cyclic edge group.  Let
  \begin{itemize}
  \item $c_q(G)\define \vert\sum_{Q\in\mathcal{Q}}\chi(Q)\vert$ is the
    total Euler characteristic of \qh\ subgroups.
  \item $c_{bq}(G)\define \sum_{Q\in\mathcal{Q}}\#\partial Q$ is the
    total number of boundary components of \qh\ vertex groups
  \item $\mathcal{Z}(G)$ is the collection of conjugacy classes of
    centralizers of edge groups of $G$. Warning: \emph{not} the center
    of $G$.
  \item For a given rigid vertex $R$ of the principle cyclic
    \jsj\ decomposition, let $v(R)$ be the valence of $R$.  This is the
    same as the number of conjugacy classes of centralizers of
    incident edge groups \emph{in} $R$.
  \item $c_a(G)\define\sum_{A\in\mathcal{A}}(\rk(A)-1)$
  \item $c_b(G)=c_a(G)+\betti(\Delta)$
  \end{itemize}

  The \term{complexity} of $G$ with respect to
  $\Delta$ is the ordered tuple
  \[
  \jcomp(G,\Delta)=(c_q(G),-c_{bq}(G),\vert\mathcal{Z}\vert,c_b(G),\betti(\Delta),\vert\mathcal{R}\vert,\sum_{R\in\mathcal{R}}v(R)) 
  \]
  The ``,$\Delta$'' is suppressed from the notation if $\Delta$ is the
  principle cyclic \jsj\ of $G$.
\end{definition}

\par Complexities are compared lexicographically.  The complexity
$\jcomp_i$ is the restriction of $\jcomp$ to the first $i$
coordinates.

\emph{Throughout this section $G$ and $H$ are freely indecomposable
  limit groups, $\eta\colon G\into H,$ and $\eta^{\#}\colon
  H^1(H,\zee)\to H^1(G,\zee)$ is injective.}

\par We need to be able to compare the complexity of a principle
decomposition to the complexity of the \jsj.

\begin{lemma}
  \label{lem:identitymap}
  Let $G$ be a freely indecomposable limit group with principle cyclic
  \jsj\ $\Delta_G,$ let $\E$ be a fixed family of subgroups
  of $G,$ and let $\Delta$ be the principle cyclic decomposition of
  $G$ associated to the family of principle cyclic splittings in which
  each $E\in\E$ is elliptic.  Then
  $\jcomp(G,\Delta)\leq\jcomp(G),$ with equality if and only if
  $\Delta$ is the \jsj.
\end{lemma}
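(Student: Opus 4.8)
The plan is to exploit the structural description recorded before the statement: every principle cyclic decomposition of $G$, and in particular the relative decomposition $\Delta$ in which each $E\in\E$ is elliptic, is obtained from the principle cyclic \jsj\ $\Delta_G$ by a finite sequence of three elementary moves --- folding, cutting a \qh\ vertex group along an essential simple closed curve, and collapsing a connected subgraph. Since $\jcomp$ is ordered lexicographically and $\jcomp(G)=\jcomp(G,\Delta_G)$ by definition, it suffices to show (i) no move increases $\jcomp$, and (ii) any move that genuinely alters the decomposition strictly lowers the first coordinate it touches while fixing the earlier ones. The inequality and the backward direction of the equality claim are then immediate, and the forward direction follows because equality in every coordinate forces every move to be trivial.

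First I would treat the two \emph{surface} coordinates $c_q$ and $-c_{bq}$. Cutting a \qh\ vertex $Q$ along an essential simple closed curve $c$ preserves the total Euler characteristic, since $\chi$ is additive and $\chi(c)=0$; hence if each resulting piece still carries two intersecting essential simple closed curves it remains \qh\ and $c_q$ is unchanged, while if a piece degenerates to an annulus or a pair of pants it leaves $\mathcal{Q}$ and $c_q$ strictly drops. In either case the cut produces exactly two new boundary components, so $c_{bq}$ strictly increases and $-c_{bq}$ strictly decreases. Because the \qh\ vertices of the \jsj\ are maximal, folding and collapsing can only absorb a \qh\ vertex into a neighbor, never enlarge or create one; thus they do not increase $c_q$, and when they fix $c_q$ they leave $c_{bq}$ alone. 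Consequently $c_q(G,\Delta)\leq c_q(G)$, with $-c_{bq}$ nonincreasing once $c_q$ is fixed, and $c_{bq}$ preserved only if no \qh\ vertex is cut.

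Assuming the surface coordinates agree, the \qh\ vertices of $\Delta$ coincide with those of $\Delta_G$ and $\Delta$ differs from $\Delta_G$ only by folding and collapsing on the rigid and abelian part. Here I track the remaining coordinates in order. Folding and collapsing only identify or delete edges and vertices, so they create no new conjugacy class of edge-group centralizer, no new graph cycle, no new rigid vertex, and no new incident edge; hence $\vert\mathcal{Z}\vert$, $c_b=c_a+\betti(\Delta)$, $\betti(\Delta)$, $\vert\mathcal{R}\vert$, and $\sum_R v(R)$ are each nonincreasing. The delicate case is $c_b$: collapsing a cyclic edge between two abelian vertices merges them, by commutative transitivity, into a single abelian group whose torsion-free rank is the sum of the two ranks minus one, so $c_a$ is unchanged and the compensating strict decrease registers in an earlier coordinate (the lost centralizer class in $\vert\mathcal{Z}\vert$) rather than in $c_b$.

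Finally, for the equality statement, suppose $\jcomp(G,\Delta)=\jcomp(G)$, so equality holds coordinatewise. By the monotonicity above this rules out each move in turn: no \qh\ vertex is cut (coordinates $c_q,-c_{bq}$), no graph cycle is collapsed ($\betti$), no edge-centralizer class is merged or lost ($\vert\mathcal{Z}\vert$), no rigid vertex is merged ($\vert\mathcal{R}\vert$), and no incident edge is folded away ($\sum_R v(R)$). Hence $\Delta$ and $\Delta_G$ have the same elliptic subgroups, i.e. $\Delta$ is the \jsj, as desired. I expect the main obstacle to be exactly the bookkeeping in (ii): verifying that the moves can be ordered so that each nontrivial one produces a triangular, strictly decreasing step in the lexicographic order, and in particular that the interplay between the abelian contribution $c_a$ and the graph Betti number inside $c_b$ cannot silently absorb a nontrivial collapse, and that a cut forced by making some $E\in\E$ elliptic really changes $c_q$ or $c_{bq}$ instead of being undone by a later fold.
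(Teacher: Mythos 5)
Your proposal follows essentially the same route as the paper's own (much terser) proof: the paper likewise observes that $\Delta$ is obtained from $\Delta_G$ by cutting \qh\ vertex groups along simple closed curves, folding, and collapsing subgraphs, and then checks coordinate-by-coordinate that these moves cannot increase $\jcomp$ and that equality forces every move to be trivial. Your writeup just fills in the bookkeeping (in particular for $-c_{bq}$, $\vert\mathcal{Z}\vert$, and $c_b$) that the paper dismisses as obvious, so there is nothing substantive to add.
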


\begin{proof}
  We can construct $\Delta$ by cutting \qh\ vertex groups of
  $\Delta_G$ along simple closed curves, folding, and collapsing
  subgraphs.  To handle $c_b,$ observe that any collection of disjoint
  simple closed curves on \qh\ vertex groups of $\Delta$ can be
  completed to a collection which achives at most $c_b(G)$.
  
  The inequalities on $c_q$ and $c_{bq}$ are obvious, and if they are
  equal, then the identity map simply identifies \qh\ vertex
  groups. The remaining inequalities are obvious.
\end{proof}

\par We spread the proof of Theorem~\ref{thm:alignment} across the
next two lemmas.

\begin{lemma}
  \label{surfaceinequality}
  $c_q(G)\geq c_q(H)$.  If equality holds then $c_{bq}(G)\leq c_{bq}(H)$.
\end{lemma}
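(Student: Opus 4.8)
The plan is to realize the inclusion $\eta\colon G\into H$ as an immersion of graphs of spaces and to compare, surface piece by surface piece, the \qh\ vertex groups of the induced decomposition $\Delta_G$ of $G$ with those of the principle cyclic \jsj\ $\Delta_H$ of $H$. The inequality on total Euler characteristic is then a covering--degree count, and Lemma~\ref{lem:identitymap} is used at the end to pass from the induced decomposition back to the \jsj\ of $G$.

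First I would set up the induced decomposition: let $T=T_{\Delta_H}$, let $S\subset T$ be the minimal $G$--invariant subtree, and let $\Delta_G=S/G$ with induced map $\eta^{\#}\colon\Delta_G\to\Delta_H$, which is principle cyclic. Fix a \qh\ vertex group $Q=\pi_1(\Sigma_Q)$ of $H$, whose incident edge groups are boundary curves of $\Sigma_Q$. By Lemma~\ref{actshyperbolically}, $G$ is hyperbolic in every essential one--edged splitting of $H$, in particular in those induced by essential simple closed curves of $\Sigma_Q$; hence no conjugate of $G$ is elliptic in such a splitting, and each nonabelian vertex group $G_v$ of $\Delta_G$ with $\eta^{\#}(v)=Q$ is conjugate into $Q$ and is itself \qh, the fundamental group of a subsurface $\Sigma_v$ covering its image in $\Sigma_Q$. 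The immersion restricts over $Q$ to a map $\bigsqcup_v\Sigma_v\to\Sigma_Q$ which, away from the boundary, is a local homeomorphism, so each $\Sigma_v$ is a finite cover of an essential subsurface $\Sigma_v'\subseteq\Sigma_Q$ of some degree $d_v\geq 1$.

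Next I would run the Euler characteristic count. Since $\chi$ is multiplicative in covers, $\vert\chi(\Sigma_v)\vert=d_v\vert\chi(\Sigma_v')\vert\geq\vert\chi(\Sigma_v')\vert$. The submersion statement of Lemma~\ref{actshyperbolically}, together with the $\mathbb{Q}$--surjectivity of $\eta_*$ on $H_1$ that is equivalent to injectivity of $\eta^{\#}$ on $H^1$, forces the images $\Sigma_v'$ to fill $\Sigma_Q$, so that $\sum_v\vert\chi(\Sigma_v')\vert\geq\vert\chi(\Sigma_Q)\vert$ by additivity of $\chi$. Summing over all \qh\ vertex groups $Q$ of $H$ gives $c_q(\Delta_G)\geq c_q(H)$, and since Lemma~\ref{lem:identitymap} gives $c_q(G)\geq c_q(\Delta_G)$, I conclude $c_q(G)\geq c_q(H)$.

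Finally I turn to the equality clause, which I expect to be the main obstacle. Equality $c_q(G)=c_q(H)$ forces equality at each stage above: $c_q(\Delta_G)=c_q(H)$, and in the covering count $d_v=1$ with the images $\Sigma_v'$ partitioning $\Sigma_Q$. The delicate point is that this partition cannot involve any \emph{essential} recutting of $\Sigma_Q$: cutting one \qh\ piece of $H$ along an essential simple closed curve into two \qh\ pieces of $G$ would raise the boundary count by two, so I must exclude it. I would rule it out by observing that a subdividing curve is an essential splitting of $H$ in which $G$ is hyperbolic, and that realizing it as a genuine edge of $\Delta_G$ is incompatible with the tightness already extracted from the equality, leaving a single piece $\Sigma_v\cong\Sigma_Q$ over each $Q$. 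Granting this rigidity, the only remaining discrepancy in boundary counts is that boundary curves of $\Sigma_Q$ lying on \emph{inessential} splittings of $H$ --- those attached to valence one cyclic vertices --- need not be crossed by $G$ and may therefore be absorbed, appearing as non--peripheral curves of $\Sigma_v$ rather than as edges of $\Delta_G$. Hence $\#\partial\Sigma_v\leq\#\partial\Sigma_Q$, and summing yields $c_{bq}(G)\leq c_{bq}(H)$. Establishing the rigidity statement --- that equality in the Euler characteristic count forbids essential subdivision --- is where I expect most of the work to lie; the surrounding argument is bookkeeping with covering degrees and additivity of $\chi$.
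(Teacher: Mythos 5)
Your overall frame---induce a decomposition of $G$ from its action on the tree for the \jsj\ of $H$, compare \qh\ pieces by covering degree, and use Lemma~\ref{lem:identitymap} to return to $\jsj(G)$---matches the paper's, but two of your key steps are genuine gaps. The first is your description of the vertex groups of $\Delta_G$ lying over a \qh\ vertex $Q$ as finite covers of \emph{embedded} essential subsurfaces $\Sigma_v'\subseteq\Sigma_Q$. Such a vertex group is merely a finitely generated subgroup $G\cap Q^g$, and a finitely generated infinite-index subgroup of a surface group with boundary is free and is in general carried only by an \emph{immersed} compact surface (consider the cyclic group generated by a non-simple closed curve); it need not be a cover of any embedded subsurface. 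The paper disposes of this at the outset: a nontrivial infinite-index intersection would force $G$ to be freely decomposable, contrary to hypothesis, so each such vertex group is trivial or of finite index in $Q$. With that in hand your ``filling'' step is both unsupported and unnecessary---the submersion clause of Lemma~\ref{actshyperbolically} concerns the underlying graph at a vertex with a \emph{unique} nonabelian preimage and says nothing about subsurfaces being disjoint or covering $\Sigma_Q$, and injectivity on $H^1$ does not give additivity of $\chi$ over the $\Sigma_v'$. The paper instead uses Lemma~\ref{actshyperbolically} to produce, for each $Q'$ of $H$, at least one finite-index piece, for which $\vert\chi\vert=d\,\vert\chi(Q')\vert\geq\vert\chi(Q')\vert$ already suffices; summing over $Q'$ and using that $\Delta_G$ is obtained from $\jsj(G)$ by cutting \qh\ pieces along simple closed curves (which preserves total $\vert\chi\vert$) gives $c_q(G)\geq c_q(H)$.

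The second gap is the equality clause, which you flag as the crux but do not prove; moreover the rigidity statement you propose to establish (no essential subdivision of $\Sigma_Q$ into several pieces of $G$) targets a configuration already excluded by the infinite-index argument, since two proper subsurface pieces inside one $\Sigma_Q$ would each be of infinite index. What actually needs controlling is the opposite phenomenon: in the equality case each \qh\ piece of $\jsj(G)$, cut along the curves producing $\Delta_G$, has exactly one complementary component mapping homeomorphically onto a \qh\ piece of $H$, and every other component has Euler characteristic zero, i.e.\ is an annulus that is not boundary parallel. The inequality $c_{bq}(G)\leq c_{bq}(H)$ then follows from a direct count: each cutting curve adds two boundary circles to the collection of complementary components, each leftover annulus removes two from those counted in $c_{bq}(H)$, and the annuli cannot occupy all sides of all cutting curves. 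Your ``absorbed boundary curves'' bookkeeping does not substitute for this: a boundary circle of $\Sigma_Q$ that becomes interior to a \qh\ piece of $\jsj(G)$ changes $\chi$ unless what is glued across it is precisely such an annulus, which is the case the count is designed to handle.
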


\begin{proof}
  Let $T$ be the Bass-Serre tree for the restricted \jsj\ of
  $H$.  Since $\eta$ is injective, $G$ inherits a graph of groups
  decomposition $\Delta$ from its action on $T$.  Let $Q$ be a vertex
  group of $\Delta$ conjugate into some element $Q'$ of
  $\mathcal{Q}(H)$.  There are two possibilities: $Q$ either has finite
  or infinite index in $Q'$.  If $Q$ has infinite index and is
  nontrivial then $G$ must be freely decomposable, contrary to
  hypothesis.  Thus $Q$ is either trivial or finite index.

  Let $c$ be a simple closed curve on some element $Q'$ of
  $\mathcal{Q}(H)$ giving a essential one-edged splitting $\Delta_c$
  of $H$.  By Lemma~\ref{actshyperbolically} $G$ acts hyperbolically
  in $\Delta_c,$ hence there is some $Q$ which maps to a finite index
  subgroup of a conjugate of $Q'$.  The graph of groups decomposition
  $\Delta$ of $G$ is obtained by slicing \qh\ vertex groups of $G$
  along simple closed curves, folding, and collapsing subgraphs of the
  resulting decomposition.  This immediately gives $c_q(G)\geq
  c_q(H)$.

  Suppose equality holds.  Let $c_k$ be the simple closed curves
  cutting the \qh\ vertex groups of $\jsj(G),$ and let
  $Q'_1,\dotsc,Q'_m$ be the complementary components which don't map
  to \qh\ vertex groups of $H$.  Since $c_q(G)=c_q(H),$ each component
  $Q'_j$ has Euler characteristic $0$.  Any such complementary
  component cannot be boundary parallel, thus if there are any then
  $c_{bq}(H)>c_{bq}(G)$.  If equality holds then the \qh\ subgroups of
  $G$ and those of $H$ are in one to one correspondence and the
  respective maps are isomorphisms.
\end{proof}

\par An inclusion $G\into H$ as above is \term{\qh--preserving} if it
is a one-to-one correspondence on \qh\ vertex groups and the maps are
isomorphisms. If $H$ has an inessential one-edged splitting $\Delta,$
then $\Delta$ corresponds to an edge connecting a valence one cyclic
vertex group of $\jsj(H)$ to a \qh\ vertex group. If $G\into H$ is
\qh--preserving then it is necessarily bijective on such valence one
vertex groups.

\par It follows immediately from Lemma~\ref{actshyperbolically} that
if $G\into H$ is \qh--preserving then
$\vert\mathcal{Z}(G)\vert\geq\vert\mathcal{Z}(H)\vert$.

\begin{lemma}
  \label{underlyinginequality}
  \label{lem:abelians} 
  \label{centralizerinequality}
  $\jcomp_5(G)\geq\jcomp_5(H)$. If equality holds then there is an
  induced bijection $\mathcal{A}(G)\to\mathcal{A}(H),$ and for each
  $A,$ $A/P(A)\to\eta_{\#}(A)/P(\eta_{\#}(A))$ is virtually onto.
\end{lemma}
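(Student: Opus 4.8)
The plan is to establish the lexicographic inequality coordinate by coordinate, settling the first three coordinates by citation and then handling $c_b$ and $\betti(\Delta)$ with a single homological device. Coordinates one and two are Lemma~\ref{surfaceinequality}: if $c_q(G)>c_q(H)$, or if $c_q(G)=c_q(H)$ and $c_{bq}(G)<c_{bq}(H)$, the inequality $\jcomp_5(G)\geq\jcomp_5(H)$ is already strict. Otherwise equality in both forces $\eta$ to be \qh--preserving, and the remark following Lemma~\ref{surfaceinequality} supplies $\vert\mathcal{Z}(G)\vert\geq\vert\mathcal{Z}(H)\vert$ together with an induced correspondence of edge--group centralizers; a strict inequality here again finishes the comparison. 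So for the rest I assume $\eta$ is \qh--preserving and, when $\vert\mathcal{Z}(G)\vert=\vert\mathcal{Z}(H)\vert$, that the correspondence is a bijection $\mathcal{Z}(G)\to\mathcal{Z}(H)$. I will carry out the analysis for the decomposition $\Delta_G=S/G$ that $G$ inherits from $T=T_H$ and recover the statement for $\jsj(G)$ at the end: Lemma~\ref{lem:identitymap} gives $\jcomp(G,\Delta_G)\leq\jcomp(G)$, and truncating a lexicographic inequality to its first five coordinates preserves it, so $\jcomp_5(G)\geq\jcomp_5(G,\Delta_G)$.

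For the homological core, given a principle cyclic decomposition $\Delta$ of a freely indecomposable limit group $K$ I set
\[
W(K)=\set{\phi\in H^{1}(K,\mathbb{Q})}:{\phi|_{V}=0\text{ for every rigid or {\qh} vertex group }V}
\]
and let $W_0(K)\subseteq W(K)$ be the subspace of classes vanishing on \emph{every} vertex group. I claim $\dim W_0(K)=\betti(\Delta)$ and $\dim W(K)=c_b(K)$. The first is immediate, since a homomorphism to $\mathbb{Q}$ killing all vertex groups factors through $\pi_1$ of the underlying graph. For the second I would run the Mayer--Vietoris sequence of the graph of groups: a class in $W(K)$ is a compatible family of homomorphisms on the abelian vertex groups together with the $\betti(\Delta)$ loop parameters, and the edge constraints force $\phi|_A$ to kill the peripheral subgroup of each abelian vertex $A$, contributing $\rk(A/P(A))$ dimensions; the \jsj\ normalization that $P(A)$ is cyclic makes this $\rk(A)-1$, and summing and adding the loop parameters yields $c_a(K)+\betti(\Delta)=c_b(K)$. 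The inequality is then soft. Every vertex group of $\Delta_G$ is the intersection of $G$ with a conjugate of a vertex group of $\jsj(H)$, so a nonabelian (rigid or \qh) vertex group of $\Delta_G$ sits inside a rigid or \qh\ vertex group of $H$, never inside an abelian one. Hence for $\psi\in W(H)$ the pullback $\psi\circ\eta$ kills every rigid and \qh\ vertex group of $\Delta_G$, so $\eta^{\#}(W(H))\subseteq W(G)$, and the same argument against all vertex groups gives $\eta^{\#}(W_0(H))\subseteq W_0(G)$. As $\eta^{\#}$ is injective, $c_b(G)\geq c_b(H)$ and $\betti(\Delta_G)\geq\betti(\Delta_H)$, completing $\jcomp_5(G,\Delta_G)\geq\jcomp_5(H)$.

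For the equality clause, suppose $\jcomp_5(G)=\jcomp_5(H)$; then $\Delta_G$ agrees with $\jsj(G)$ through the fifth coordinate and $\eta^{\#}$ restricts to isomorphisms $W(H)\to W(G)$ and $W_0(H)\to W_0(G)$, hence induces an isomorphism of abelian quotients $W(H)/W_0(H)\to W(G)/W_0(G)$, each of which is canonically $\bigoplus_{A}\Hom(A/P(A),\mathbb{Q})$. The bijection $\mathcal{Z}(G)\to\mathcal{Z}(H)$ restricts to the abelian vertex groups---each being the common centralizer of its incident edge groups---and this is the asserted bijection $\mathcal{A}(G)\to\mathcal{A}(H)$, identifying $\eta_{\#}(A)$ with the vertex receiving $A$. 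Relative to it the quotient isomorphism is block diagonal, so each block $\Hom(\eta_{\#}(A)/P(\eta_{\#}(A)),\mathbb{Q})\to\Hom(A/P(A),\mathbb{Q})$ is an isomorphism; dualizing, $A/P(A)\to\eta_{\#}(A)/P(\eta_{\#}(A))$ is a rational isomorphism, in particular virtually onto.

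The hard part will be the homological identity $\dim W(K)=c_b(K)$, and more precisely checking that the inherited decomposition $\Delta_G$ really carries the \jsj\ normalizations used there: cyclic peripheral subgroups at abelian vertices, no essential abelian--abelian edges, and control of \hnn--loops based at abelian vertices, so that the edge--constraint bookkeeping collapses exactly to $\sum_A(\rk(A)-1)$ with no cross terms between loop parameters and abelian contributions. Once that identity is secured, the naturality of $W$ and $W_0$ and the injectivity of $\eta^{\#}$ do the rest essentially formally.
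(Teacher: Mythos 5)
Your proof is, at its core, the paper's own argument run in the dual. The paper compares the quotients $H_1(\Delta)\oplus\bigoplus_A A/P(A)$ of first homology and uses that $G\to H$ is virtually onto on them (dual to injectivity of $H^1(H,\zee)\to H^1(G,\zee)$); you compare the subspaces $W_0\subseteq W$ of $H^1(\cdot,\mathbb{Q})$ and use injectivity of $\eta^{\#}$ directly. These are the same device, and your filtration merely packages into one step what the paper does in two (first $c_b$, then $\betti(\Delta)$ via the map $A/P(A)\oplus H_1(\Gamma_G)\to\eta_{\#}(A)/P(\eta_{\#}(A))\oplus H_1(\Gamma_H)$). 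The identity $\dim W(K)=c_b(K)$ that you defer as ``the hard part'' is exactly the paper's unproved assertion that $\rk(H_1(H;\Delta^{(0)}_H\setminus\mathcal{A}(H)))=c_b(H)$; note that for the inequality you only need the soft direction $\dim W(G,\Delta_G)\leq c_b(G,\Delta_G)$, which holds for \emph{any} principle cyclic decomposition because the incident edge groups are nontrivial and lie in $P(A)$, so $\rk(A/P(A))\leq\rk(A)-1$. The normalization (cyclic peripheral subgroups) is only needed for the equality $\dim W(H)=c_b(H)$ on the $\jsj$ side, where it is one of the standing assumptions. Your reduction to $\Delta_G$ via Lemma~\ref{lem:identitymap} and truncation of the lexicographic order is the same move the paper makes in the proof of Theorem~\ref{thm:alignment}.

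The one step that does not stand as written is the derivation of the bijection $\mathcal{A}(G)\to\mathcal{A}(H)$ from the bijection $\mathcal{Z}(G)\to\mathcal{Z}(H)$. That correspondence matches a centralizer class of $H$ with the class of a $G$--centralizer contained in it, and nothing forces a \emph{noncyclic} centralizer $A'$ of $H$ to meet $G$ in a noncyclic subgroup; if $G$ meets $A'$ only in cyclic subgroups, the restriction of the $\mathcal{Z}$--bijection is not a bijection between the sets of noncyclic abelian vertex groups, and your block decomposition has no block on the $G$ side to receive the $A'$ block. The paper excludes this with the $c_b$ count (``if no element of $\mathcal{A}(H)$ contains the image of $A$ then $c_b(G)>c_b(H)$''), and your own machinery excludes it too, just not by the route you wrote: choose $\phi\in W(H)$ nonzero on $A'/P(A')$ and zero on every other abelian block; its pullback kills every rigid and \qh\ vertex of $\Delta_G$, kills every abelian vertex mapping elsewhere, and on a cyclic vertex over $A'$ kills the incident edge groups, hence their finite-index overgroup, so $\eta^{\#}(\phi)\in W_0(G)$ while $\phi\notin W_0(H)$ --- contradicting the isomorphism $W(H)/W_0(H)\to W(G)/W_0(G)$ that equality of the fourth and fifth coordinates gives you. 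With that repair (and the acknowledged bookkeeping comparing $\Delta_G$ with $\jsj(G)$, which the paper elides in exactly the same way), your proof goes through.
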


\begin{proof}
  We first handle $c_b$.

  Let $\Delta_H$ be the principle cyclic \jsj\ of $H,$ and let
  $\Delta_G$ be the decomposition $G$ inherits from its action on
  $T_{\Delta_H}$.  We may assume that $G\into H$ is \qh--preserving,
  is bijective on conjugacy classes of centralizers of edge groups.
  Let \[H_1(H;\Delta^{(0)}_H\setminus
  \mathcal{A}(H))=H_1(\Delta_H)\oplus\bigoplus_{A\in\mathcal{A}(H)}A/P(A)\]
  Similarly, define $H_1(G;\Delta^{(0)}_G\setminus \mathcal{A}(G))$.
  The composition $G\to H\to H_1(H;\Delta^{(0)}_H\setminus
  \mathcal{A}(H))$ factors through $H_1(G;\Delta^{(0)}_G\setminus
  \mathcal{A}(G))$.  Since $H^1(H,\zee)\into H^1(G,\zee)$ the map
  $H_1(G;\Delta^{(0)}_G\setminus \mathcal{A}(G))\to
  H_1(H;\Delta^{(0)}_H\setminus \mathcal{A}(H)) $ must be virtually
  onto.  But $\rk(H_1(H;\Delta^{(0)}_H\setminus
  \mathcal{A}(H)))=c_b(H)$ and $\rk(H_1(G;\Delta^{(0)}_G\setminus
  \mathcal{A}(G)))\leq c_b(G)$.

  Let $\Delta$ be an essential one-edged splitting of $H$ in which all
  \qh\ subgroups are elliptic.  Let $T$ be the corresponding
  Bass-Serre tree.  By Lemma~\ref{actshyperbolically} $G$ doesn't fix
  a point in $T$ and it inherits an essential splitting $\Delta'$ from
  this action.  Since $\eta$ is bijective of the sets of \qh\
  subgroups, and restricts to isomorphisms between them, every \qh\
  vertex group of $G$ acts elliptically in $T$.  Thus there is an edge
  group $E'$ of $\jsj(G)$ which maps to a conjugate of the edge group
  of $\Delta$. Furthermore, $E'$ is an essential splitting, otherwise
  $G$ acts elliptically in $\Delta$.

  Let $A\in\mathcal{A}(G)$ be a noncyclic abelian vertex group.  If no
  element of $\mathcal{A}(H)$ contains the image of $A,$ then
  $c_b(G)>c_b(H)$. If equality holds there is a well defined map
  $\mathcal{A}(G)\to\mathcal{A}(H)$.

  Let $A$ be an abelian vertex group of $G,$ and $\eta_{\#}(A)$ the
  associated vertex group of $H$. Since $H^1(H)\to H^1(G)$ is
  injective, the map $A/P(A)\oplus H_1(\Gamma_G)\to
  \eta_{\#}(A)/P(\eta_{\#}(A))\oplus H_1(\Gamma_H)$ must be virtually
  onto. This map sends $A/P(A)$ to $\eta_{\#}(A)/P(\eta_{\#}(A))$
  hence $\betti(\Delta_G)\geq\betti(\Delta_H),$ and if
  $\betti(\Delta_G)=\betti(\Delta_H)$ then
  $A/P(A)\to\eta_{\#}(A)/P(\eta_{\#}(A))$ must be virtually onto.
\end{proof}

\begin{theorem}
  \label{thm:alignment}

  $\jcomp_6(G)\geq\jcomp_6(H)$.  If $\jcomp_6(G)=\jcomp_6(H)$ then
  \[\sum_{R\in\mathcal{R}(G)}v(R)\geq\sum_{R\in\mathcal{R}(H)}v(R),\] i.e., $\jcomp(G)\geq\jcomp(H)$.  If $\eta\colon G\into H,$ $\jcomp(G)=\jcomp(H),$ then
  $\eta$ is bijective on vertex and edge groups, maps abelian
  vertex, edge, and peripheral subgroups to finite index subgroups of
  their respective images.  The map from the underlying graph of the
  \jsj\ of $G$ to the underlying graph of the \jsj\ of $H$ is an
  isomorphism.

  The number of values the complexity can take is controlled by
  $\betti$.
\end{theorem}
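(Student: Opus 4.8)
The plan is to establish the lexicographic inequality $\jcomp_6(G)\geq\jcomp_6(H)$ one coordinate at a time, at each stage either obtaining a strict drop (which settles the lexicographic comparison) or propagating equality to the next coordinate together with the structural consequence recorded in the preceding lemmas. The first two coordinates are exactly Lemma~\ref{surfaceinequality}: $c_q(G)\geq c_q(H)$, and in case of equality $c_{bq}(G)\leq c_{bq}(H)$, with equality throughout meaning $\eta$ is \qh--preserving. The observation recorded after that lemma supplies $\vert\mathcal{Z}(G)\vert\geq\vert\mathcal{Z}(H)\vert$, equality making $\eta$ bijective on conjugacy classes of centralizers of edge groups. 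Lemma~\ref{underlyinginequality} supplies the fourth and fifth coordinates $c_b$ and $\betti(\Delta)$; equality through the fifth coordinate produces the bijection $\mathcal{A}(G)\to\mathcal{A}(H)$ (so that noncyclic abelian vertex groups of $G$ map into abelian vertex groups of $H$) and virtual surjectivity of each $A/P(A)\to\eta_{\#}(A)/P(\eta_{\#}(A))$.

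It remains to treat the sixth coordinate under these equalities. By the second part of Lemma~\ref{actshyperbolically}, every rigid vertex group $R$ of $\Delta_H$ intersects a conjugate of $G$ in a nonabelian subgroup, so some nonabelian vertex group of $\Delta_G$ maps to $R$; since \qh\ vertices map to \qh\ vertices and noncyclic abelian vertices map to abelian vertices under the standing equalities, this vertex of $\Delta_G$ is rigid. Hence $\eta_{\#}$ carries the rigid vertices of $G$ onto those of $H$, giving $\vert\mathcal{R}(G)\vert\geq\vert\mathcal{R}(H)\vert$ and completing $\jcomp_6(G)\geq\jcomp_6(H)$. If moreover $\jcomp_6(G)=\jcomp_6(H)$ then this surjection is a bijection, so each rigid $R$ of $H$ has a unique rigid preimage $R'$, and the last clause of Lemma~\ref{actshyperbolically} makes $\eta_{\#}$ a submersion at $R'$: every edge of $\Delta_H$ incident to $R$ lifts to an edge of $\Delta_G$ incident to $R'$. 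Because edge groups are cyclic, $R'\leq R$, and $R$ is \csa, this induces a surjection on conjugacy classes of incident-edge-group centralizers, whence $v(R')\geq v(R)$; summing over the bijection yields $\sum_{R'}v(R')\geq\sum_R v(R)$, i.e.\ $\jcomp(G)\geq\jcomp(H)$.

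For the rigidity statement, assume $\jcomp(G)=\jcomp(H)$. Equality in the first three coordinates gives a \qh--preserving $\eta$, an isomorphism on each \qh\ vertex group and a bijection on the valence-one cyclic vertices attached to \qh\ vertices; equality in $c_b$ and $\betti(\Delta)$ gives the bijection $\mathcal{A}(G)\to\mathcal{A}(H)$, and, with $\eta^{\#}$ injective, the virtual surjectivity of each $A/P(A)\to\eta_{\#}(A)/P(\eta_{\#}(A))$ forces abelian vertex, edge, and peripheral subgroups to map onto finite index subgroups of their images. Equality in $\vert\mathcal{R}\vert$ makes $\eta_{\#}$ a bijection on rigid vertices, and these bijections assemble to a bijection of vertices. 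If an essential edge $e$ of $\Delta_H$ were missed by $\eta_{\#}$, then collapsing all other edges would realize $G$ as elliptic in an essential one-edged splitting of $H$, contradicting Lemma~\ref{actshyperbolically}; the only inessential edges run from valence-one cyclic vertices to \qh\ vertices and are hit by the \qh--preserving bijection. Thus $\eta_{\#}$ is onto, and since $\betti(\Delta_G)=\betti(\Delta_H)$ together with the vertex bijection forces the two connected underlying graphs to have equally many edges, $\eta_{\#}$ is an isomorphism of graphs; reading off edge stabilizers through it gives the bijection on edge groups.

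Finally, every coordinate of $\jcomp$ is a non-negative integer bounded by a function of $\betti(G)$: the accessibility bound for the \jsj\ (\cite[Theorem~3.9]{sela::dgog1}) controls $\vert\mathcal{R}\vert$, $\vert\mathcal{Q}\vert$, $\betti(\Delta)$ and the total valence $\sum_R v(R)$, while the Euler characteristics and boundary counts of the \qh\ pieces and the ranks of the abelian vertex groups (hence $c_q,c_{bq},c_b,\vert\mathcal{Z}\vert$) are controlled in the same way; as $\jcomp$ has seven coordinates it takes only finitely many values, the number of which is controlled by $\betti$. The step I expect to be the real obstacle is the equality analysis of the preceding paragraph: one must verify that, once rigid vertices and their stars have been matched by the submersion and the \qh\ and abelian pieces by their respective bijections, no vertex or edge of $\Delta_G$ is left unaccounted for, so that the vertex and edge correspondences are genuine bijections and the abelian edge and peripheral maps land in finite index subgroups — and it is precisely the hypothesis that $\eta^{\#}$ be injective that rules out the loss of rank which would otherwise defeat these bijections.
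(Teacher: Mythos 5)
Your route is essentially the paper's: the first five coordinates via Lemma~\ref{surfaceinequality}, the observation on $\vert\mathcal{Z}\vert$, and Lemma~\ref{underlyinginequality}; the sixth and seventh via Lemma~\ref{actshyperbolically} (existence of a rigid preimage, uniqueness under equality, submersion at the unique preimage giving $v(R')\geq v(R)$); then assembly of the vertex and edge bijections. But there is one genuine omission. Your entire analysis of coordinates six and seven, and of the rigidity statement, takes place in $\Delta_G$, the decomposition $G$ \emph{inherits} from its action on the Bass--Serre tree of $\jsj(H)$. The quantities $\vert\mathcal{R}(G)\vert$ and $\sum v(R)$ in the theorem are computed with respect to the principle cyclic \jsj\ of $G$, and the final claim is that the underlying graph of the \emph{\jsj} of $G$ maps isomorphically. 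Nothing in your argument identifies $\Delta_G$ with $\jsj(G)$; a priori $\Delta_G$ is only some principle cyclic decomposition obtained from $\jsj(G)$ by cutting \qh\ pieces, folding, and collapsing, and its rigid-vertex count and total valence could differ from those of the \jsj. The missing bridge is Lemma~\ref{lem:identitymap}: $\jcomp(G)\geq\jcomp(G,\Delta_G)$ with equality if and only if $\Delta_G$ \emph{is} the \jsj. Your argument establishes $\jcomp(G,\Delta_G)\geq\jcomp(H)$; composing with that lemma gives the stated inequality, and equality then forces $\Delta_G=\jsj(G)$, which is exactly what makes the graph isomorphism a statement about the \jsj\ of $G$ rather than about an auxiliary decomposition.

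A second, smaller point: your claim that accessibility alone controls every coordinate of $\jcomp$ is too quick for the inessential valence-one cyclic vertex groups attached to boundary components of \qh\ pieces. Accessibility bounds the number of \emph{essential} vertex groups; the inessential ones require the separate counting argument the paper gives (disjoint curves cutting off pairs of pants each contribute to $\betti(G)$, so each \qh\ vertex group meets at most $2\betti(G)$ inessential vertices). The conclusion is still that the number of complexity values is controlled by $\betti$, but that extra step is needed.
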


\begin{proof}[Proof of Theorem~\ref{thm:alignment}]
  Assume $\jcomp_5(G)=\jcomp_5(H)$.  By
  Lemma~\ref{centralizerinequality}, the inclusion is a one-to-one
  correspondence on noncyclic abelian vertex groups.



  Let $\Delta_H$ be the principle cyclic \jsj\ of $H,$ let
  $\pi\colon\Delta_G\to\Delta_H$ be the induced map of underlying
  graphs , and let $R$ be nonabelian non-\qh\ vertex group of
  $\Delta_H$. By Lemma~\ref{actshyperbolically} there is a nonabelian
  vertex group of $\Delta_G$ which maps to $R$. Since $\eta$ is
  bijective on \qh\ subgroups, there is a rigid vertex group $R'$ of
  $G$ which maps to $R$. If $\jcomp_5(G)=\jcomp_5(H)$ then $R'$ is the
  unique such vertex group.


  Again, by Lemma~\ref{actshyperbolically}, since there is only one
  vertex group $R'$ mapping to $R,$ the map
  $\E(R')\to\E(R)$ is onto and $v(R')\geq v(R)$.


  Let $Z$ be an essential cyclic abelian vertex group of $\Delta_H,$
  and let $Z_1,\dotsc,Z_k$ be the vertex groups of $\Delta_G$ mapping
  to $Z$. Since $\eta$ is bijective on nonabelian vertex groups, and
  since all vertex groups adjacent to $Z$ are nonabelian, the induced
  map $\eta_{\#}\colon\sqcup\E(Z_i)\to\E(Z)$ is
  bijective. Arguing as in Lemma~\ref{actshyperbolically}, $k=1$ and
  the map $\E(Z_1)\to\E(Z)$ is bijective. The same
  observation shows that if $A$ is noncyclic abelian, then there is a
  unique $A'$ mapping to $A$ and that the map on the link is onto. The
  map is also injective, again because $\eta$ is a bijective on
  nonabelian vertex groups.

  Thus, if the complexities are equal, then the inclusion must induce
  a homeomorphism of underlying graphs. By construction, the map is
  label preserving, and it automatically respect all incidence and
  conjugacy data from the respective \jsj\ decompositions.  

  This shows that $\jcomp(G,\Delta_G)\geq\jcomp(H),$ and if equality
  holds, then the morphism $\Delta_G\to\Delta_H$ is of the correct
  form. By Lemma~\ref{lem:identitymap}
  $\jcomp(G)\geq\jcomp(G,\Delta_G),$ and if $\jcomp(G)=\jcomp(H),$
  then $\Delta_G$ is just the principle cyclic \jsj\ of $G$.  This
  gives the first half of the theorem.

  The bound on the number of values the complexity can take follows
  from either acylindrical accessibility \cite{sela::acyl} plus the
  bound on the rank of a limit group with complexity $b_0,$
  or~\cite[Lemma~\ref{STABLE-lem:rankheightbound}]{louder::stable},
  which gives a bound on the complexity of the principle cyclic \jsj\
  in terms of the first betti number.  Those arguments bound the
  number of essential vertex groups.  Adjoining roots doesn't increase
  the first betti number, so if $b_1$ and $b_2$ are boundary
  components of a \qh\ vertex group adjacent to inessential vertex
  groups, then a simple closed curve cutting off a pair of pants with
  $b_1,b_2$ as the two other boundary components makes a contribution
  of one to $\betti(G)$; $n$ nonintersecting simple closed curves as
  above make a contribution of $n$ to $\betti(G),$ thus each \qh\
  vertex group is attached to at most $2\betti(G)$ inessential vertex
  groups.  Since $\betti(G)$ controls the number of \qh\ vertex
  groups, there are boundedly many inessential abelian vertex groups.
\end{proof}

In light of Theorem~\ref{thm:alignment}, if $\jcomp(G)=\jcomp(H),$
then we say that $G$ and $H$ are \term{aligned}.  Before representing
injections of limit groups topologically, we devote a section to
proving Theorem~\ref{maintheorem}, assuming the material from
section~\ref{hyptoell}.

\section{Proof of Theorem~\ref{maintheorem}}
\label{sec:mainproof}

\par The bound implicitly computed in the proof of
Theorem~\ref{maintheorem} can be made slightly better if we show that
nonabelian limit groups with first betti number $2$ are free. The next
lemma is not necessary, but we record it here for lack of a better
place to put it. In~\cite{rankthreeclassification}, Fine, et al.,
classify limit groups with rank at most three.  The next lemma shows
that in rank two the rank can be relaxed to first betti number.

\begin{lemma}
  Let $G$ be a limit group with first betti number $2$.  Then
  $G\cong\free_2$ or $\zee^2$.
\end{lemma}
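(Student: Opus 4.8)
The plan is to split on the Grushko decomposition and to treat the freely indecomposable case by induction on $\depthpc$, the engine being a homological count on the principle cyclic \jsj. The one external fact I will use is that limit groups have torsion-free abelianization, so that $\betti(G)=2$ means $H_1(G;\zee)\cong\zee^2$. The bookkeeping tool is this: if a group acts on a graph of groups with underlying graph $\Gamma$ and \emph{cyclic} edge groups, the rational Mayer--Vietoris sequence gives
\[
\betti(G)=\sum_{v}\betti(G_v)+\betti(\Gamma)-r,
\]
where $r$ is the rank of $\bigoplus_{e}H_1(G_e;\zee)\to\bigoplus_{v}H_1(G_v;\zee)$. As each edge group is infinite cyclic, $r$ is at most the number of edges, and $\betti(\Gamma)=(\#\text{edges})-(\#\text{vertices})+1$; hence $\betti(G)\ge \sum_v(\betti(G_v)-1)+1$. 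Applied to a Grushko decomposition $G=A_1*\dotsb*A_k*\free_r$ (homology is additive over free products), together with the claim below that a freely indecomposable non-cyclic limit group has $\betti\ge2$, the relation $2=\betti(G)=\sum_i\betti(A_i)+r$ under free decomposability ($k+r\ge2$) forces $(k,r)=(0,2)$, i.e.\ $G\cong\free_2$. So I may henceforth assume $G$ is freely indecomposable.

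The heart is the following claim, proved by induction on $\depthpc(G)$, which is well-founded by Theorem~\ref{analysislatticebound}: \emph{a nonabelian freely indecomposable limit group has $\betti\ge3$, while an abelian one is $\zee^n$}. Granting this, a freely indecomposable $G$ with $\betti(G)=2$ cannot be nonabelian, so it is abelian, hence $\zee^2$, finishing the lemma; the non-cyclic bound $\betti\ge2$ used above is then immediate. For the base case of elementary $G$, the freely indecomposable possibilities are $\zee^n$ and closed surface groups, and a closed surface group that is a limit group with $\betti\le2$ is the torus $\zee^2$ (orientable genus $\ge2$ gives $\betti\ge4$, and the non-orientable candidate carries $2$-torsion in $H_1$, so is excluded by torsion-freeness of the abelianization).

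For the inductive step, let $G$ be nonabelian freely indecomposable with $\betti(G)\le2$ and apply the count to its principle cyclic \jsj\ $\Delta$: the budget $\sum_v(\betti(G_v)-1)\le\betti(G)-1\le1$ shows every vertex group has $\betti\le2$ and at most one is noncyclic. A rigid vertex group is nonabelian freely indecomposable of strictly smaller depth, so by induction it would have $\betti\ge3$; thus there are no rigid vertices. A \qh\ vertex group is the fundamental group of a surface with boundary carrying two intersecting essential curves, so $\betti\ge2$ with equality only for the once-punctured torus. Hence each vertex group is $\zee$, $\zee^2$, or a once-punctured-torus \qh, with at most one not equal to $\zee$. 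If there is no \qh\ vertex, every vertex is abelian, and commutative transitivity (a maximal abelian subgraph has abelian fundamental group, as in the observations preceding Lemma~\ref{actshyperbolically}) forces $G$ abelian --- a contradiction. If there is a once-punctured-torus \qh\ vertex $Q$, it has a single boundary component, hence is a leaf; the remaining abelian vertices are connected and so generate some $\zee^m$, giving $G\cong\free_2*_{\langle[a,b]\rangle}\zee^m$ with $[a,b]$ identified with some $c_0\in\zee^m$. Since $[a,b]$ is null-homologous in $Q$, we get $H_1(G)\cong\zee^2\oplus(\zee^m/\langle c_0\rangle)$; torsion-freeness forces $c_0$ primitive, then $\betti(G)=2$ forces $m=1$ and $[a,b]=c_0$, so $G\cong\free_2$ is freely decomposable --- again a contradiction. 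This closes the induction.

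I expect the main obstacle to be the inductive step: setting up the homological count correctly (in particular that each edge contributes at most one to $r$), verifying that rigid vertex groups genuinely occupy a lower level of the analysis lattice so the induction hypothesis applies to them, and disposing of the once-punctured-torus case --- where torsion-freeness of $H_1$ of a limit group is exactly the input that eliminates the groups $\langle a,b,c\mid[a,b]=c^k\rangle$ with $k\ge2$. The reduction paragraph and the base case are routine by comparison.
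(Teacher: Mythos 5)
Your overall architecture (reduce to the nonabelian freely indecomposable case, induct on $\depthpc$, run a homological count over the principle cyclic \jsj) matches the paper's, and the Mayer--Vietoris bookkeeping $\betti(G)\ge\sum_v(\betti(G_v)-1)+1$ is a clean way to see that at most one vertex group is noncyclic. But there is a genuine gap at the pivotal step ``a rigid vertex group is nonabelian freely indecomposable of strictly smaller depth, so by induction it would have $\betti\ge3$; thus there are no rigid vertices.'' Rigid vertex groups of the \jsj\ of a freely indecomposable limit group need \emph{not} be freely indecomposable --- this is exactly why the analysis lattice interleaves a Grushko level after every \jsj\ level. A rigid vertex group only has to be freely indecomposable \emph{relative to its incident edge groups}; it can perfectly well be $\free_2$ with the edge groups generated by words (such as $a^2b^2$ or $[a,b]$) not lying in any proper free factor. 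Applying the induction hypothesis to such an $R$ gives $R\cong\free_2$, not a contradiction, and your budget computation is consistent with this: $G$ an iterated \hnn\ extension of $\free_2$ over cyclic subgroups, with $\betti(G)=2$, survives your count (one noncyclic vertex, $r=\#e$). This is precisely the hard case, and it is where the paper does its real work: the descending chain $G=G_0>G_1>\dotsb>G_n$ of bouquet-of-circles \jsj's terminating in a free group, and the appeal to Corollary~1.6 of \cite{louder::scott}, which says that for an \hnn\ extension of a free vertex group arising this way the images of the incident edge groups can be conjugated to a basis, forcing $G_{n-1}$ to be freely decomposable. Without that external input (or a substitute) your induction does not close.

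A secondary, smaller issue: in the \qh\ case you identify the once-punctured torus as the only surface-with-boundary of $\betti=2$ admitting two intersecting essential simple closed curves, but the once-punctured Klein bottle also has $\betti=2$, and there your computation breaks down because its boundary word $a^2b^2$ is \emph{not} null-homologous, so the conclusion $H_1(G)\cong\zee^2\oplus(\zee^m/\langle c_0\rangle)$ fails. You would need to either exclude small non-orientable \qh\ pieces by convention or handle that surface separately (the paper sidesteps this by arguing directly that an essential curve on $Q$ yields an \hnn\ extension over a freely decomposable group). I'd also note that your ``all vertices abelian $\Rightarrow$ $G$ abelian by commutative transitivity'' step needs a word about \hnn\ loops in the all-abelian subgraph, though that case is easily killed by malnormality of maximal abelian subgroups.
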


\begin{proof}
  We may assume $G$ is nonabelian and freely indecomposable.  If $G$ is
  abelian it satisfies the theorem trivially, and if freely
  decomposable, the free factors are limit groups with first betti
  number one, and must be infinite cyclic.

  The proof is by induction on the depth of the cyclic analysis
  lattice.  All essential cyclic splittings of $G$ are \hnn\
  extensions, otherwise there is a one-edged cyclic splitting such
  that each vertex group has betti number at least two, and $G$
  therefore has first betti number at least three.  By a simple
  variation of the proof of Theorem~\ref{analysislatticebound} the
  depth of the cyclic analysis lattice of $G$ is finite.  Suppose that
  $G$ has a \qh\ vertex group $Q$.  Then any essential simple closed
  curve on $Q$ must correspond to an \hnn\ extension of $G$:
  $G=G'*_E$.  Since the splitting comes from a \qh\ vertex group, $G'$
  must be freely decomposable, hence is $\free_2$.  If $G$ has no \qh\
  vertex groups it's principle cyclic \jsj\ decomposition must be a
  bouquet of circles.  Let $G=G_0>G_1>G_2>\dotsb>G_n$ be a sequence of
  vertex groups of cyclic \jsj\ decompositions such that $G_i,$
  $i<n-1,$ is freely indecomposable and has a bouquet of circles as
  its principle cyclic \jsj, terminating at the first index $n$ such
  that such that $G_n$ is freely decomposable, hence free.  This chain
  must have finite length since the cyclic analysis lattice is
  finite.  We argue that $G_n$ free implies that $G_{n-1}$ is free.

  Let $f\colon G_{n-1}\to\free$ be a homomorphism such that $f(G_n)$
  has nonabelian image.  Since $G_{n-1}$ is an \hnn\ extension of
  $G_n,$ by Corollary~1.6 of~\cite{louder::scott}, the images of the
  incident edge groups in $G_n$ can be conjugated to a basis for $G_n$
  and $G_{n-1}$ is freely decomposable, contrary to hypothesis.
\end{proof}

\begin{definition}[Extension]
  An \term{extension} of a pure staircase
  $(\G,\HH)$ is a staircase $(\G,\HH')$ such that
  the diagrams in Figure~\ref{fig:extension} commute.  An extension is
  \term{admissible} if one of the following mutually exclusive
  conditions holds.
  \begin{itemize}
  \item $\G$ is freely decomposable, and the freely indecomposable free
    factors of $\HH'(i)$ embed in $\HH(i)$ under $\sigma_i$.
  \item $\G$ is freely indecomposable, has \qh\ subgroups, and the
    vertex groups of the decomposition of $\HH'(i)$ obtained by
    collapsing all edges not adjacent to \qh\ vertex groups embed in
    $\HH(i)$ for all $i$.  (This is just the restricted principle cyclic
    \jsj.)
  \item $\G$ is freely indecomposable, \qh--free, and for all $i,$
    vertex groups of the (restricted) principle cyclic \jsj\ of
    $\HH'(i)$ embed in vertex groups of $\HH(i)$ under $\sigma_i$.
  \end{itemize}
\end{definition}

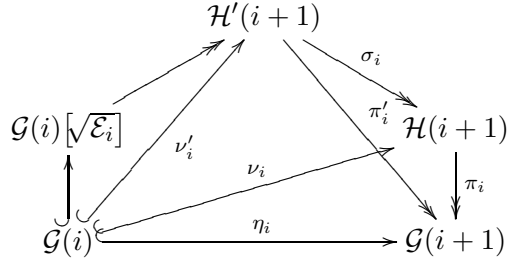
\begin{figure}[h]
  \centerline{%
    \xymatrix{%
      & \HH'(i+1)\ar@{->>}[ddr]^{\pi'_i}\ar@{->>}[dr]^{\sigma_i} \\
      \adjoinroot{\G(i)}{\E_i}{}\ar@{->>}[ur] &  & \HH(i+1)\ar@{->>}[d]^{\pi_i}\\
      \G(i)\ar@{^(->}[rr]^{\eta_i}\ar@{^(->}[uur]_{\nu'_i}\ar@{^(->}[u]\ar@{^(->}[urr]^{\nu_i} & & \G(i+1)
}}
\caption{Extensions of sequences}
\label{fig:extension}
\end{figure}

\par An admissible extension has the property that each $\sigma_i$ is
strict, surjective, and maps elliptic subgroups of a decomposition of
$\HH'(i+1)$ to elliptic subgroups of $\HH(i+1)$.  The relation ``mapps
onto'' partially orders the collection of extensions, and if $\HH''$ is
an extension of $\HH'$ then $\HH''\geq_{\sqrt{}}\HH'$.  For some $i,$ if
$\sigma_i$ is not one-to-one on the sets of vertex groups or edge
groups then the inequality is strict.  The envelope of a rigid vertex
group of the principle cyclic \jsj\ is just the vertex group, hence if
$\sigma_i$ is one-to-one on the sets of vertex groups and edge groups
then it is an isomorphism.  If $(\G,\HH'')\onto(\G,\HH')\onto(\G,\HH)$ is
a pair of admissible extensions then $(\G,\HH'')\onto(\G,\HH)$ is an
admissible extension.

\par We work with staircases which are maximal with respect to
$\geq_{\sqrt{}},$ rather than arbitrary staircases.  To do this we
have to pay a penalty, but not too large of one.

\begin{lemma}
  \label{existresolutions}
  For all $K$ there exists $C=C(K,\comp(\text{ }))$ such that if
  $(\G,\HH,\E)$ is a staircase and
  $\ninj(\comp((\G,\HH,\E)))=C(K,\comp((\G,\HH,\E))),$
  then there is a $\geq_{\sqrt{}}$--maximal extension of a contraction
  $(\G',\HH',\E')$ of $(\G,\HH,\E)$ with
  $\ninj((\G',\HH',\E'))\geq K$ and
  $\comp((\G',\HH',\E'))\leq\comp((\G,\HH,\E))$.
\end{lemma}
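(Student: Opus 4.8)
The plan is to contract $(\G,\HH,\E)$ onto a structurally uniform sub-staircase that carries most of its noninjective behaviour, and then to pass to a $\geq_{\sqrt{}}$--maximal extension of that contraction, showing that neither step raises the complexity while the two together cost only a bounded multiple of the noninjective indices. The bound $M$ below is where the dependence of $C$ on $\comp$ enters.

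\emph{Purify and uniformize.} By Theorem~\ref{thm:alignment} a freely indecomposable limit group whose first betti number is at most $\betti(\G)$ realizes only boundedly many isomorphism types of principle cyclic \jsj, and the same is then true of the Grushko-plus-\jsj\ data of an arbitrary such group; let $M=M(\comp((\G,\HH,\E)))$ bound the number of these configurations (this records, in particular, the pure type). Since $\betti$ is non-increasing along the tower $\G$, each $\G(i)$ falls into one of these $M$ classes. Among the $C$ indices $i$ with $\HH(i)\onto\G(i)$ not an isomorphism, one class therefore contains at least $C/M$ of them, and I would contract onto exactly those indices. The result $(\G'',\HH'',\E'')$ is pure, its $\G''(j)$ all share a configuration, and $\ninj((\G'',\HH'',\E''))\geq C/M$, because for a retained index the map $\HH''(j)\onto\G''(j)$ is literally the original $\HH(i_j)\onto\G(i_j)$. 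Its complexity does not grow: $\betti$ is unchanged, $\depthpc$ is a maximum over a subset of the original levels and can only drop, and any increase in the number of $\sim_Z$--classes caused by amalgamating the collapsed intermediate root-adjunctions is absorbed by the slack term $2(d-d')b$ of the complexity order.

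\emph{Extend maximally.} Next I would check that $(\G'',\HH'',\E'')$ admits a $\geq_{\sqrt{}}$--maximal extension. The point is that $\geq_{\sqrt{}}$ has no infinite strictly ascending chain: a strict step forces some $\sigma_i$ to fail injectivity on the sets of vertex and edge groups, and, as $\sigma_i$ carries elliptics to elliptics, it then strictly increases the number of vertex-plus-edge groups at level $i$. That number is bounded in terms of $\betti$ by Theorem~\ref{thm:alignment}, and adjoining roots leaves $\betti$ fixed, so over the finitely many levels of the staircase the total count is uniformly bounded and ascending chains stabilize; a maximal extension $(\G'',\HH',\E'')$ thus exists. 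Since each $\sigma_i$ is onto, $\pi'_i=\pi_i\circ\sigma_i$ is an isomorphism only when $\pi_i$ is, so every noninjective index survives and $\ninj((\G'',\HH',\E''))\geq C/M$; taking $C=C(K,\comp)=MK$ makes this at least $K$. The extension fixes the tower $\G''$ and the family $\E''$, leaving $\betti$ and the number of $\sim_Z$--classes alone, while the admissibility conditions---the vertex groups of the relevant decomposition of each $\HH'(i)$ embed in $\HH''(i)$---keep $\depthpc$ from increasing; hence $\comp((\G'',\HH',\E''))\leq\comp((\G,\HH,\E))$.

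I expect the existence of maximal extensions to be the crux. The two delicate points are verifying that a strict $\geq_{\sqrt{}}$ genuinely increases a quantity bounded uniformly in $\betti$ across all levels at once, and confirming that the admissibility hypotheses are strong enough to stop $\depthpc$ from growing as one spreads $\HH''$ out toward the full root-adjunctions $\adjoinroot{\G''(i)}{\E''_i}{}$. Once the complexity partial order and Theorem~\ref{thm:alignment} are in hand, the pigeonhole and the monotonicity of $\ninj$ are routine.
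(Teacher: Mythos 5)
Your argument is correct and takes essentially the route the paper intends: the paper's proof consists of a pointer to the corresponding statement in the companion paper (which performs the same pigeonhole onto a pure contraction carrying a definite fraction of the noninjective indices, with $C=MK$) plus the one new ingredient, namely that ascending chains of admissible extensions terminate because a proper step must fail to be one-to-one on the sets of vertex and edge groups and therefore strictly changes a quantity --- the paper uses the tail of $\jcomp$, you use the vertex-plus-edge count, which is the same thing --- that takes only boundedly many values in terms of $\betti$ by Theorem~\ref{thm:alignment}. Your observation that $\pi'_i=\pi_i\circ\sigma_i$ forces noninjective indices to survive extension, and the monotonicity of the three complexity coordinates under contraction, are exactly the routine parts the paper leaves implicit.
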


\par The constants in this lemma do not depend on
$\Vert\E\Vert,$ and its proof is formally identical to the
proof of \cite[Theorem~\ref{STABLE-thr:nostrict}]{louder::stable}.  To
adapt the proof, we need to show that the strict resolutions arising
in an extension have bounded length.  This follows
from~\cite[Lemma~\ref{STABLE-lem:rankheightbound}]{louder::stable},
bounding the rank of $\HH(i)$ from above by a function of
$\comp((\G,\HH)),$ but a proof more in the spirit of this paper goes as
follows: If $\HH^{(n)}(i)\onto\dotsb\onto\HH(i)$ is a strict resolution
appearing in a sequence of extensions, then
$\jcomp(G)\geq\jcomp(\HH^{(m)}(i))$ (See Lemma~\ref{thm:alignment} and
Definition~\ref{def:compjsj}.), moreover, if
$\HH^{(m+1)}(i)\onto\HH^{(m)}(i)$ is not injective on sets of vertex or
edge spaces, or collapses subsurface groups of \qh\ vertex groups, the
complexity must decrease.  By Theorem~\ref{thm:alignment} the number
of values the complexity takes is controlled by $\betti,$ and the
resolutions have length controlled by $\comp((\G,\HH,\E))$.

\mnote{jcomp decreases along the resolution.}

\mnote{Suppose Theorem~\ref{maintheorem} is false.  Then by the lemma,
  for some smallest complexity $(b_0,d_0,e_0)$ there exist maximal
  cyclic staircases with arbitrarily high $\ninj$.}\mnote{Where does
  this find a minimal complexity crap go?}

\par Each pure kind of staircase is handled in
turn over the next three subsections.  In all cases the strategy is the
same: either there is compatibility between (collapses of) \rjsj\
decompositions/Grushko factorizations, the complexity decreases,
or proper extensions exist.

\subsection{Freely decomposable}

\par This is the most singular case in that the arguments work for
nearly all finitely generated groups, not just limit groups.

\par The complexity for freely indecomposable groups is used to show
that base sequences of freely indecomposable staircase can be divided
into segments such that the base groups of a segment have the same
\jsj\ decompositions, in the sense of Theorem~\ref{thm:alignment}.
There is a similar complexity for freely decomposable groups which
accomplishes the same thing but with regard to Grushko decompositions.
The following theorem from~\cite{louder::scott} shows how the
complexity for freely decomposable groups is useful.

\begin{definition}[Scott complexity]
  Let $G$ be a finitely generated group with Grushko decomposition
  $G=G_1*\dotsb*G_p*\free_q$.  The \term{Scott
    complexity} of $G$ is the lexicographically ordered pair
  $\scott(G)\define(q-1,p)$.
\end{definition}

\par The number of Scott complexities of limit groups with $\betti=b$
is bounded by $b^3$.

\begin{theorem}[Scott complexity and adjoining roots to groups]
  \label{roots::freelydecomposable}
  Suppose that $\phi\colon G\into H$ and $H$ is a quotient of
  $G'=\adjoinroot{G}{\gamma_i}{k_i},$ $\boldsymbol{\gamma}_i$ a
  collection of distinct conjugacy classes of indivisible elements of
  $G$ such that $\boldsymbol{\gamma}_i\neq\boldsymbol{\gamma}_j^{-1}$
  for all $i,j$ and $\gamma_i\in\boldsymbol{\gamma}_i$.  Then
  $\scott(G)\geq\scott(H)$.  If equality holds and $H$ has no $\zee_2$
  free factors, then there are presentations of $G$ and $H$ as
  \[
    G\cong\grushko{G}{p}{q}^G,\quad H\cong\grushko{H}{p}{q}^H
    \]
  a partition of $\set{\boldsymbol{\gamma}_i}$ into subsets
  $\boldsymbol{\gamma}_{j,i},$ $j=0,\dotsc,p,$ $i=1,\dotsc,i_p,$
  representatives $\gamma_{j,i}\in G_j\cap\boldsymbol{\gamma}_{j,i},$
  $i\geq 1,$ $\gamma_{0,i}\in\free_q^G\cap\boldsymbol{\gamma}_{0,i},$
  such that with respect to the presentations of $G$ and $H$:
  \begin{itemize}
    \item $\phi(G_i)<H_i$
    \item $\adjoinroot{G_j}{\gamma_{j,i}}{k_{j,i}}\onto H_j$
    \item $\phi(\free_q^G)<\free_q^H$
    \item
    $\free^G_q=\group{\gamma_{0,1}}*\dotsb*\group{\gamma_{0,i_0}}*F$
    \item
      $\free^H_q=\group{\sqrt{\gamma_{0,1}}}*\dotsb*\group{\sqrt{\gamma_{0,i_0}}}*F$
    \item
    $G'\cong\adjoinroot{G_1}{\gamma_{1,i}}{}*\dotsb*\adjoinroot{G_p}{\gamma_{p,i}}{}*\group{\sqrt{\gamma_{0,1}}}*\dotsb*\group{\sqrt{\gamma_{0,i_0}}}*F$
  \end{itemize}
  All homomorphisms are those suggested by the presentations, and the
  maps on $F$ are the identity.
\end{theorem}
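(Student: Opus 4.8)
The plan is to analyze everything through the Bass--Serre tree of the Grushko decomposition of $H$ and to exploit the elementary fact that translation length is multiplicative: for any action on a tree, $\ell(g^{k})=\lvert k\rvert\,\ell(g)$. Since the edge groups of a Grushko decomposition are trivial, this single observation controls exactly how the adjoined roots interact with free splittings.

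First I would set up the tree-theoretic picture. Write $G\cong\grushko{G}{p}{q}^{G}$ for the Grushko decomposition, let $T$ be the Bass--Serre tree of the Grushko decomposition of $H$ (so $T$ has trivial edge stabilizers and vertex stabilizers the conjugates of the $H_{j}$), and let $T_{G}\subseteq T$ be the minimal $G$--invariant subtree, which exists because $\phi$ is injective. The action $G\actson T_{G}$ is a free splitting of $G$, and its vertex groups are the intersections $G\cap gH_{j}g^{-1}$. Because the Grushko decomposition is the maximal free splitting, the decomposition $G$ inherits from $T_{G}$ is a collapse of the Grushko decomposition of $G$; in particular its underlying graph has first Betti number at most $q$, and each inherited vertex group sits inside a free product of conjugates of the $G_{i}$'s with a free group.

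The crucial step concerns the roots. Since $H$ is a quotient of $G'=\adjoinroot{G}{\gamma_{i}}{k_{i}}$, it is generated by $\phi(G)$ together with the images $\bar t_{i}$ of the stable letters, and these satisfy $\bar t_{i}^{\,k_{i}}=\phi(\gamma_{i})$. By multiplicativity of translation length, $\bar t_{i}$ is elliptic in $T$ if and only if $\phi(\gamma_{i})$ is; and when they are elliptic, triviality of edge stabilizers forces a nontrivial elliptic element to fix a \emph{unique} vertex, so $\bar t_{i}$ and $\phi(\gamma_{i})$ fix the same vertex. Thus an elliptic root contributes only to the vertex group already containing $\phi(\gamma_{i})$, while a hyperbolic root occurs exactly when $\gamma_{i}$ is hyperbolic in $T_{G}$, i.e.\ is already spending free rank. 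Consequently each $H_{j}$ is generated by the corresponding $G$--vertex group together with elliptic roots of its own elements, and no free splitting of $H$ is manufactured beyond what $G$ already carries. Combined with the previous paragraph and the fact that passing to a quotient only decreases the relevant counts, this gives $q_{H}\leq q_{G}$, and $p_{H}\leq p_{G}$ when the free ranks agree; that is, $\scott(G)\geq\scott(H)$.

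For the rigidity statement I would assume $\scott(G)=\scott(H)$ and run the inequalities backwards. Equality in the free rank forces $T_{G}$ to be the Grushko tree of $G$ (no collapsing occurs and the induced map of underlying graphs is a homeomorphism), and equality in $p$ forces a bijection between the freely indecomposable factors $G_{j}$ and $H_{j}$ under which $\phi(G_{j})\leq H_{j}$. Tracking the roots through this bijection partitions the conjugacy classes $\boldsymbol{\gamma}_{i}$ according to the factor their representatives lie in; the elliptic roots attached to $G_{j}$ exhibit $H_{j}$ as a quotient of $\adjoinroot{G_{j}}{\gamma_{j,i}}{k_{j,i}}$, while the roots of the primitive elements $\gamma_{0,i}$ of $\free_{q}^{G}$ refine them to basis elements $\sqrt{\gamma_{0,i}}$ of $\free_{q}^{H}$, leaving a common free complement $F$. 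Assembling these pieces yields the displayed presentations and the asserted free-product description of $G'$. The hypothesis that $H$ has no $\zee_{2}$ free factors is used precisely here, to rule out the one degenerate way a root adjunction can produce a spurious finite freely indecomposable factor and thereby break the factor correspondence. The main obstacle is this rigidity case rather than the inequality: the inequality is translation-length bookkeeping on top of Grushko's theorem, whereas equality of $\scott$ must be shown to forbid \emph{all} folding, collapsing, and merging of factors and to assign each adjoined root to a single Grushko factor---this is exactly where the indivisibility of the $\gamma_{i}$ and the condition $\boldsymbol{\gamma}_{i}\neq\boldsymbol{\gamma}_{j}^{-1}$ prevent two roots from conspiring to collapse free rank.
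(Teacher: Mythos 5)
First, a point of comparison: the paper does not prove this statement at all --- it is quoted verbatim as Theorem~1.2 of \cite{louder::scott} (``This is~[Theorem~1.2]{louder::scott}''), where it is established by a fairly long graphs-of-spaces/folding argument. So your attempt is not competing with a short in-paper proof; it is competing with the substantial content of an entire companion paper, and that should already make you suspicious of a one-page tree argument.

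Your setup is sound as far as it goes: taking $T$ to be a Grushko tree for $H$, $T_G$ the minimal $\phi(G)$-subtree, and using $\ell(g^k)=|k|\,\ell(g)$ together with triviality of edge stabilizers to conclude that each root $\bar t_i$ is elliptic fixing the same vertex as $\phi(\gamma_i)$, or hyperbolic with the same axis --- all of that is correct, and it does show (for instance) that $T=H\cdot T_G$, so that $T_G/\phi(G)\to T/H$ is a surjection of graphs. The gap is the sentence ``passing to a quotient only decreases the relevant counts.'' It does not: a surjection of graphs that identifies two vertices without identifying edges \emph{increases} the first Betti number, and $q_H$ is exactly $\betti(T/H)$. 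Nothing in your argument rules out elements of $H\setminus\phi(G)$ (words mixing the $\bar t_i$ with elements of $\phi(G)$) identifying vertices of $T_G$ that are not $\phi(G)$-equivalent, which is precisely the mechanism by which free rank could be created. Likewise, the assertion that each $H_j$ is generated by the corresponding vertex group of $T_G$ together with the elliptic roots fixing that vertex is not a consequence of the setup: for an action with trivial edge stabilizers, the stabilizer of a vertex in $\langle S\rangle$ is in general not generated by the elliptic elements of $S$ fixing that vertex. That assertion is essentially the second bullet of the theorem, i.e.\ the conclusion you are trying to prove. The inequality $\scott(G)\geq\scott(H)$ is therefore not ``translation-length bookkeeping on top of Grushko''; controlling the vertex identifications and showing that any increase in $\betti$ of the quotient graph is paid for elsewhere is the actual content, and it is carried out in \cite{louder::scott} by a normal-form/folding induction on Scott complexity that your sketch replaces with an assertion. (Your argument does go through in the easiest case, where $T_G/\phi(G)$ is a single vertex, i.e.\ $G$ freely indecomposable; the difficulty is entirely in the freely decomposable case, which is the one this paper actually needs.)
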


\par This is~\cite[Theorem~1.2]{louder::scott}.

\begin{remark}
  Theorem~\ref{roots::freelydecomposable} is stated in terms of
  adjoining roots to cyclic subgroups of a group, whereas
  Definition~\ref{def:adjunctionofroots} refers to collections of
  abelian subgroups.  This difference is immaterial to the discussion
  here since adjoining roots to a noncyclic abelian group can be
  accomplished by adjoining roots to a suitable collection of cyclic
  subgroups.  By passing from a noncyclic abelian subgroup to cyclic
  subgroups, the measure $\Vert\text{ }\Vert$ is unchanged.
\end{remark}

\begin{definition}[Free products]
  Let $(\G_i,\HH_i)$ be a collection of staircases on the same index
  set $I$.  Then the graded free product $((*_i\G_i),(*_i\HH_i)),$ with
  the obvious maps, is also a sequence of adjunctions of roots.
\end{definition}

\begin{lemma}
  \label{maintheorem::freelydecomposable}
  Suppose Theorem~\ref{maintheorem} holds for all staircases with
  complexity less than $(b_0,d_0,e_0)$.  Then Theorem~\ref{maintheorem}
  holds for pure freely decomposable staircases of complexity
  $(b_0,d_0,e_0)$.
\end{lemma}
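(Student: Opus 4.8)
The plan is to exploit the monotonicity of Scott complexity along the tower: cut $\G$ into boundedly many segments on which the Grushko decompositions are forced to be compatible, split the staircase over each segment as a graded free product of staircases of strictly smaller complexity, and invoke the inductive hypothesis factorwise. First I would note that for each flight $(\G(i-1),\HH(i),\G(i))$ the group $\G(i)$ is a quotient of $\adjoinroot{\G(i-1)}{\E_{i-1}}{}$ while $\G(i-1)\into\G(i)$; so, after replacing each noncyclic abelian member of $\E_{i-1}$ by cyclic subgroups as in the remark following Theorem~\ref{roots::freelydecomposable}, that theorem applies and yields $\scott(\G(i-1))\geq\scott(\G(i))$. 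Thus $i\mapsto\scott(\G(i))$ is non-increasing. Since adjoining roots never raises $\betti$ and there are at most $b_0^3$ Scott complexities among limit groups with $\betti\leq b_0$, this sequence takes at most $b_0^3$ values and so partitions the index set into at most $b_0^3$ maximal segments of constant Scott complexity.

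Next I would fix such a segment. On it $\scott$ is constant, so the equality clause of Theorem~\ref{roots::freelydecomposable}, applied to each consecutive inclusion and composed along the segment, supplies compatible Grushko decompositions $\G(i)=\G_1(i)*\dotsb*\G_p(i)*\free_q$ (abelian, i.e.\ $\zee^2$, free factors being peeled off and treated as in the abelian case; torsion-freeness of limit groups rules out any other exceptional factor). Each $E\in\E_i$ is abelian, hence conjugate into some free factor, so the family $\E_i$ distributes among the factors, and by the Free products construction the staircase restricts over the segment to a graded free product of staircases $(\G_j,\HH_j,\E_j)$, one per free factor, the free part contributing staircases that merely adjoin roots to copies of $\zee$. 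Because the Grushko decomposition is nontrivial and adjoining roots never raises $\betti$, the first betti number of every factor staircase is strictly less than $b_0$, while $\depthpc(\HH_j)\leq d_0$ and $\sum_j\Vert\E_j\Vert=\Vert\E\Vert\leq e_0$; hence, in the given partial order, each $(\G_j,\HH_j,\E_j)$ has complexity strictly below $(b_0,d_0,e_0)$.

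Finally I would assemble the bound. Over the segment the surjection $\pi_i\colon\HH(i)\onto\G(i)$ is the graded free product of the maps $\pi_{i,j}\colon\HH_j(i)\onto\G_j(i)$, so it is an isomorphism precisely when every $\pi_{i,j}$ is; therefore the number of non-isomorphism indices over the segment is at most $\sum_j\ninj((\G_j,\HH_j,\E_j))$. The inductive hypothesis bounds each summand by $\ninj(\comp((\G_j,\HH_j,\E_j)))$, the number of factors is controlled by $b_0$, and there are at most $b_0^3$ segments, so summing bounds $\ninj((\G,\HH,\E))$ by a function of $(b_0,d_0,e_0)$ alone. The main obstacle is the second step: checking that the equality case of Theorem~\ref{roots::freelydecomposable} genuinely chains along an entire segment into a single compatible graded free product---correctly transporting the root data and the families $\E_i$ to the factors---together with the bookkeeping needed to dispose of $\zee^2$ free factors, for which the clean structure statement of that theorem does not directly apply.
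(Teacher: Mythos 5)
Your first two steps (monotonicity of Scott complexity, cutting into at most $b_0^3$ segments of constant $\scott$, and using the equality case of Theorem~\ref{roots::freelydecomposable} to distribute the Grushko factors and the root data along a segment) match the paper. But there is a genuine gap in the third step, and it is not the obstacle you flag at the end. You assert that over a segment ``the surjection $\pi_i\colon\HH(i)\onto\G(i)$ is the graded free product of the maps $\pi_{i,j}\colon\HH_j(i)\onto\G_j(i)$,'' without ever producing a free product decomposition of $\HH(i)$. Theorem~\ref{roots::freelydecomposable} decomposes $\adjoinroot{\G(i-1)}{\E_{i-1}}{}$ and the quotient $\G(i)$ compatibly with the Grushko decomposition of $\G(i-1)$, but $\HH(i)$ is an arbitrary intermediate splitting group: it is a quotient of $\adjoinroot{\G(i-1)}{\E_{i-1}}{}$ containing $\G(i-1)$, and nothing forces it to split as a free product of pieces $\HH_j(i)$ aligned with the $\G_j(i)$ (nor is $\scott(\HH(i))$ forced to equal $\scott(\G(i-1))$, which is what you would need to apply the equality clause to $\G(i-1)\into\HH(i)$ instead). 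So the factorwise count of non-injective indices does not directly bound $\ninj$ for $\HH$.

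The paper's fix is the extension mechanism: set $\HH(i+1)_j\define\img_{\HH(i+1)}\bigl(\adjoinroot{\G(i)_j}{\E^j_i}{}\bigr)$ and replace $\HH(i+1)$ by $\HH'(i+1)\define\bigl(*_j\HH(i+1)_j\bigr)*\adjoinroot{F_i}{\E^0_i}{}$, which \emph{does} split compatibly and surjects onto $\HH(i+1)$, giving an extension $(\G,\HH',\E)$ that is a graded free product of staircases of strictly smaller $\betti$. Induction applies to these factors; at a good index $l$ the composite $\HH'(l)\onto\HH(l)\onto\G(l)$ is an isomorphism, which forces $\HH(l)\onto\G(l)$ to be injective and hence an isomorphism. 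That factorization step is the missing idea in your argument. Two smaller points: the exceptional hypothesis in Theorem~\ref{roots::freelydecomposable} concerns $\zee_2=\zee/2\zee$ free factors, which torsion-freeness of limit groups excludes outright, so there is no need for special bookkeeping of $\zee\oplus\zee$ factors (they are freely indecomposable factors like any other $G_j$); and the chaining of the equality case along a segment that you worry about is comparatively routine once the extension is in place.
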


\begin{proof}
  Let $(\G,\HH,\E)$ be a staircase with complexity
  $(b_0,d_0,e_0)$.  Since limit groups are torsion free, no $\G(i)$ has
  a $\zee_2$ free factor, and by
  Theorem~\ref{roots::freelydecomposable} for all but
  $\betti(\G(1))^3$ indices $i_j,$ the subsequences
  $\G(i_j)\into\G(i_j+1)\into\dotsb\into\G(i_{j+1}-1)$ can be
  decomposed into free products of freely indecomposable groups
  staircases.  Moreover, elements of $\E_i$ are either part of
  a basis of a free free factor of $\G(i)$ or are conjugate into a
  freely indecomposable free factor of $\G(i)$.  Write $\G(i)$ as the
  free product \[\G(i)_1*\dotsb*\G(i)_p*F_i\] given by the lemma,
  where $F_i$ is a free group of rank $q$ and $\scott(\G(i))=(q-1,p)$
  for all $i$.  Let $\E^j_i$ be the subset of
  $\E_i$ consisting of elements conjugate into $\G(i)_j,$ and
  rearrange indices so that $\G(i)_j$ maps to $\G(i+1)_j$ for all
  $j$.  Let $\E^0_{i}$ be the elements of $\E$ which
  are conjugate into $F_i$.  By
  Theorem~\ref{roots::freelydecomposable} there are
  decompositions 
  \[
    \adjoinroot{\G(i)}{\E_i}{}\cong\left(\adjoinroot{\G(i)_1}{\E^1_{i}}{}*\dotsb*\adjoinroot{\G(i)_p}{\E^p_{i}}{}\right)*\adjoinroot{F_i}{\E^0_{i}}{}
    \]
  where the last factor is free.  Let
  $\HH(i+1)_j\define\img_{\HH(i+1)}(\adjoinroot{\G(i)_j}{\E^j_{i}}{})$
  The sequence $\HH'$ defined
  by 
  \[
    \HH'(i+1)\define\left(*_j\HH(i+1)_j\right)*\adjoinroot{F_i}{\E^0_{i}}{}
    \]
  is an extension of $\HH$.  Then $(\G,\HH',\E)$ splits as a
  free product, the freely indecomposable free factors of which are
  $(\G_j,\HH'_j,\E^j)$.  These free factors have strictly lower
  $\betti$ than $\G,$ depth at most $d_0=\depthpc(\HH),$ hence have
  $\ninj((\G_j,\HH'_j,\E^j))\leq
  \ninj(\comp(b_0-1,d_0,e_0))=:B$.  If $\Vert\G\Vert>B\cdot
  \betti(\G)\geq B\cdot p,$ then, for some index $l,$ the map
  $\HH'(l)\onto\G(l)$ is visibly an isomorphism.  Since this map factors
  through $\HH(l),$ $\HH(l)\onto\G(l)$ is an isomorphism as well.
\end{proof}

\par We finish this subsection by proving the base case of the
induction. Let $(\G,\HH,\E)$ be a maximal staircase of
complexity $(b,2,e)$. By the proof of
Lemma~\ref{roots::freelydecomposable}, the staircase splits as a free
product of freely indecomposable staircases
$(\G_i,\HH_i,\E^i),$ and such that each $\HH_i(j)$ is
elementary. If $\G_i$ is abelian, then clearly $\HH_i(j)\onto\G_i(j)$
is an isomorphism, and if nonabelian, $\HH_i(j)$ is the fundamental
group of a closed surface. Since $\G_i$ is freely indecomposable, it
is also the fundamental group of a closed surface. Divide $\G_i$ into
segments such that the Euler characteristic is constant on each
segment. Then $\G_i(j)\to\G_i(j+1)$ is an isomorphism on each segment
and $\HH_i(j)$ is a trivial extension of $\G_i(j-1)$ for all $j$ on
each segment, thus $\HH_i(j)\onto\G_i(j)$ is an isomorphism.

\subsection{Freely indecomposable, \qh}

\par Lemma~\ref{onetooneonedges} allows us to handle injections
$G\into H,$ $\jcomp(G)=\jcomp(H),$ and such that $G$ has a
\qh\ subgroup.

\begin{lemma}
  \label{alignedandqh}
  Suppose Theorem~\ref{maintheorem} holds for all staircases with
  strictly lower complexity than $(b_0,d_0,e_0)$.  Then
  Theorem~\ref{maintheorem} holds for staircases with \qh\ subgroups
  and complexity $(b_0,d_0,e_0)$.
\end{lemma}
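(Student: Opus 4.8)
The plan is to mimic the scheme of Lemma~\ref{maintheorem::freelydecomposable} and of the Example, with the restricted \jsj\ playing the role the Grushko decomposition played there. By Lemma~\ref{existresolutions} it suffices to bound $\ninj$ for staircases that are $\geq_{\sqrt{}}$--maximal and have complexity exactly $(b_0,d_0,e_0)$: if a general staircase of this complexity had $\ninj$ exceeding $C(K_0,\comp)$, where $K_0$ is the bound we produce for the maximal case, we could pass to a maximal extension of a contraction with $\ninj\geq K_0$ and no larger complexity, and either its complexity strictly drops (finishing by the inductive hypothesis) or it is a maximal complexity-$(b_0,d_0,e_0)$ staircase violating $K_0$. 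Each $\eta_i\colon\G(i)\into\G(i+1)$ factors through the adjunction $\HH(i+1)$ and so is almost onto on homology; hence $H^1(\G(i+1),\zee)\to H^1(\G(i),\zee)$ is injective and Theorem~\ref{thm:alignment} gives $\jcomp(\G(i))\geq\jcomp(\G(i+1))$. Since $\jcomp$ takes boundedly many values (controlled by $\betti$), the tower splits into a $\betti$--controlled number of segments on which consecutive groups are aligned.

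Next I would analyze a single aligned segment by cutting the staircase along the restricted \jsj. On such a segment all $\G(i)$ share one underlying graph, the inclusions are \qh--preserving, and there are bijections on vertex and edge groups; for each vertex group $V$ of the common \rjsj\ this produces a vertex--group staircase $(\G_V,\HH_V)$, which is a genuine staircase because (as in the Example) $\HH_V(i)$ is obtained from $\G_V(i-1)$ by adjoining roots. The \qh\ vertex groups are carried isomorphically both along the tower and by every $\pi_i$, so they contribute nothing. Every other vertex group arises from collapsing edges off the \qh\ part, hence sits one level lower in the principle cyclic analysis lattice, so $\depthpc(\HH_V)<d_0$. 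The slack $2(d-d')b$ built into the complexity order is exactly what absorbs the bounded growth in the number of $\sim_Z$--classes $\Vert\E_V\Vert$ when the depth drops, so $\comp((\G_V,\HH_V))$ is strictly smaller than $(b_0,d_0,e_0)$.

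I would then apply the inductive hypothesis to each non-\qh\ vertex--group staircase: its $\ninj$ is bounded by a function of its strictly smaller complexity, hence by a constant depending only on $(b_0,d_0,e_0)$. Because the number of vertex groups is itself bounded by $\betti$ (Theorem~\ref{thm:alignment}), there are only boundedly many indices at which some vertex--group map $\HH_V(i)\onto\G_V(i)$ fails to be an isomorphism. At every other index all vertex groups map isomorphically.

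The last and hardest step is to promote ``all vertex groups are isomorphisms'' to ``$\HH(i)\onto\G(i)$ is an isomorphism,'' and this is where Lemma~\ref{onetooneonedges} (the tool flagged for exactly the aligned \qh\ case) enters, giving a one-to-one correspondence on edge groups so that the gluing data of $\HH(i)$ and $\G(i)$ agree vertex-for-vertex and edge-for-edge. The genuine obstacle is the gluing around \qh\ vertex groups: with all vertex groups fixed, the boundary identifications and Dehn-twist parameters could still differ. As in the Example, the resolution is that every Dehn twist in an edge group adjacent to a \qh\ vertex group pushes forward along the staircase, and adjoining a root to such an edge is recorded downstream; the only way $\pi_i$ could then fail to be an isomorphism would be to alter a vertex or edge group, which is excluded at the good indices, while the abelian peripheral data is pinned down by the same $\betti$ count used in Theorem~\ref{thm:alignment}. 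Combining the segment count, the per-vertex bounds, and this gluing argument produces the desired bound on $\ninj$ as a function of $(b_0,d_0,e_0)$.
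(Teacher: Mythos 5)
Your overall strategy---segment the tower using Theorem~\ref{thm:alignment}, cut each aligned segment along the common \rjsj\ into vertex-group staircases, induct on those, and then promote vertex-wise isomorphisms to a global one---is the same as the paper's. But there are two genuine gaps.

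First, you assert that for each non-\qh\ vertex group $V$ of the common \rjsj\ the pair $(\G_V,\HH_V)$ ``is a genuine staircase because (as in the Example) $\HH_V(i)$ is obtained from $\G_V(i-1)$ by adjoining roots.'' This is precisely the point that needs proof, and the Example itself defers it. To build the extension $\HH'(i+1)=\Gamma(W^{i+1}_j,Q_k,E_l)$ with the surface pieces left untouched, and to know that $W^{i+1}_j$ is obtained from $V^i_j$ by adjoining roots, you must first show that every element of $\E_i$ with $[E:F(E)]>1$ is conjugate into a non-\qh\ vertex group of $\rjsj(\G(i))$; a root adjoined to an element crossing a surface piece would wreck the decomposition. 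This is Lemma~\ref{dontpassthroughqh}, proved via the immersion machinery and Lemma~\ref{onetooneonedges}, and your proposal never establishes or invokes it.

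Second, your complexity drop is misattributed. You claim $\depthpc(\HH_V)<d_0$ and lean on the slack term $2(d-d')b$ to absorb growth in $\Vert\E_V\Vert$. But the depth coordinate of the staircase complexity is the depth of the $\HH$-side groups, and $\HH_V(i)=W^i_j$ is only a subgroup of $\HH(i)$ (the image of $\adjoinroot{V^{i-1}_j}{\E^j_{i-1}}{}$), not visibly a vertex group one level down in the analysis lattice of $\HH(i)$; your justification refers to the analysis lattice of $\G(i)$, which controls $\G_V$, not $\HH_V$. The correct and much simpler observation, which is what the paper uses, is that the \emph{first betti number} drops: since $c_q(\G(1))\neq 0$, each \qh\ vertex group carries positive first betti number, so every non-\qh\ vertex group of the \rjsj\ has $\betti<b_0$, and the induction is applied at complexity $(b_0-1,d_0,e_0)$. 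Note also that in this case no incident edge groups are adjoined to $\E^j$, so there is no growth in $\Vert\E^j\Vert$ for the slack to absorb. Finally, your last step (``the only way $\pi_i$ could fail to be an isomorphism would be to alter a vertex or edge group'') is not an argument; the paper concludes by observing that at a good index $\pi'_l$ is $\Mod(\HH'(l),\rjsj)$--strict and that every modular automorphism of $\HH'(l)$ (inner, boundary Dehn twist, or surface homeomorphism) pushes forward to $\G(l)$, whence $\HH'(l)\onto\G(l)$, and hence $\pi_l$, is an isomorphism.
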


\par The strategy is to find an extension $(\G,\HH',\E)$ of
$(\G,\HH,\E)$ such that the $\qh$ subgroups of $\HH'$ are the
``same'' as those from $\G$.  See Figure~\ref{qhexample}.  The group
$\HH(i)$ may be a total mess, but luckily it is a homomorphic image of
a limit group which shares its restricted \jsj\ with $\G(i)$ and
$\G(i+1)$.

\par To do this an auxiliary lemma which follows immediately from
Lemma~\ref{onetooneonedges} is needed.


\begin{lemma}
  \label{dontpassthroughqh}
  Let $G'$ be obtained from $G$ by adjoining roots to a collection of
  abelian subgroups $\E$.  If $\jcomp(G)=\jcomp(G')$ then
  every element $E\in\E$ such that $[E:F(E)]>1$ is conjugate
  into a non-\qh\ vertex group of $\rjsj(G)$.
\end{lemma}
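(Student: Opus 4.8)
The plan is to play the rigidity of \qh\ vertex groups under alignment against uniqueness of roots in a \csa\ group. Since $\jcomp(G)=\jcomp(G')$, Theorem~\ref{thm:alignment} shows $G$ and $G'$ are aligned, and because $c_q$ and $c_{bq}$ are the first two coordinates of $\jcomp$, the equality case of Lemma~\ref{surfaceinequality} supplies a bijection between the \qh\ vertex groups of $G$ and those of $G'$ under which $\eta$ restricts to \emph{isomorphisms}. This is precisely the hypothesis feeding Lemma~\ref{onetooneonedges}, and the present statement is meant to drop out of it immediately: by our standing convention the edge maps of edge groups into \qh\ vertex groups are isomorphisms onto boundary components, so the isomorphism $Q\to\eta(Q)$ carries each boundary curve of $Q$ to the full boundary curve of $\eta(Q)$. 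Hence every edge group of $\jsj(G)$ incident to a \qh\ vertex group maps \emph{isomorphically} under $\eta$, and no nontrivial root is adjoined along such edges.

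First I would record that each $E\in\E$ is elliptic in $\jsj(G)$, so conjugate into a vertex group: the subgroups to which roots are adjoined are abelian, the noncyclic ones are elliptic by hypothesis, and the cyclic ones are (roots of) edge groups, hence elliptic. Thus $E$ is conjugate into either a \qh\ or a non-\qh\ vertex group, and it suffices to exclude the \qh\ alternative whenever $[E:F(E)]>1$. So suppose $E$ is conjugate into a \qh\ vertex group $Q$ of $\rjsj(G)$. The adjoined root $F(E)=\sqrt{E}$ centralizes $\eta(E)$ in $G'$, so it lies in the unique maximal abelian subgroup of $G'$ containing $\eta(E)$. Since $\eta$ restricts to an isomorphism $Q\to\eta(Q)$ and a surface group contains no proper root of any of its curves, $\eta(Q)$ contains no proper root of $\eta(E)$; a proper $\sqrt{E}$ must therefore lie outside $\eta(Q)$. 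But in a \csa\ limit group an element of a vertex group whose centralizer escapes that vertex group must lie in an incident edge group, so $\eta(E)$ would be a boundary curve of $\eta(Q)$, i.e.\ an edge group incident to a \qh\ vertex. By the first paragraph such edge groups map isomorphically, forcing $F(E)=E$ and $[E:F(E)]=1$, a contradiction. Hence $E$ is not conjugate into any \qh\ vertex group, and is therefore conjugate into a non-\qh\ vertex group of $\rjsj(G)$.

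The hard part will be the boundary-curve case: I must rule out that a proper root of a boundary curve is somehow absorbed into the \qh\ vertex group itself. This is exactly where the surface isomorphism $Q\cong\eta(Q)$ and uniqueness of roots in the \csa\ group $G$ cooperate — if $\sqrt{E}$ lay in $\eta(Q)$, the inverse isomorphism would produce a root of $E$ already inside $Q$, and uniqueness of roots in $G$ would identify it with the adjoined one, collapsing the adjunction to the identity and giving $[E:F(E)]=1$. Packaging precisely this dichotomy (interior curves admit no escaping centralizer, boundary curves are edge groups that are preserved) is the content of Lemma~\ref{onetooneonedges}, which is why the statement follows from it at once.
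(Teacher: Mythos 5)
There is a genuine gap, and it sits in your very first step. You assert that every $E\in\E$ is elliptic in $\rjsj(G)$ because ``the cyclic ones are (roots of) edge groups, hence elliptic.'' Nothing in the hypotheses says this: $\E$ is an arbitrary collection of abelian subgroups of $G$, and a cyclic member can perfectly well be hyperbolic in the \jsj\ --- generated, say, by a non--boundary-parallel (and possibly non-simple) element of a \qh\ vertex group, or by an element whose reduced edge path crosses edge spaces. The surrounding machinery exists precisely because of this: the paper explicitly splits $\E$ into $\E_e$ and $\E_h$, defines $\E^{ell}_H$ as the elements hyperbolic in $\jsj(G)$ with elliptic image, and titles the relevant section ``Hyperbolic to elliptic.'' Since the conclusion of the lemma (conjugate into a non-\qh\ vertex group) already entails ellipticity in $\rjsj(G)$, assuming ellipticity at the outset begs the question. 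The actual content of the lemma is to \emph{rule out} hyperbolicity of $E$ in every one-edged splitting obtained by cutting a \qh\ vertex group $Q$ along an essential simple closed curve $c$. The paper does this by using $\jcomp(G)=\jcomp(G')$ to get a homeomorphism $\Sigma_Q\to\Sigma_{\eta_{\#}(Q)}$, hence an annular neighborhood of $c$ mapping degree one onto a neighborhood of $\eta_{\#}(c)$; collapsing everything else yields an immersion of graphs of spaces that is one-to-one on the single edge space, and Lemma~\ref{onetooneonedges} then says that any $\gamma\in\E$ with $k_\gamma>1$ crossing $c$ would force $\group{c}\into\group{\eta_{\#}(c)}$ to be a \emph{proper} finite-index inclusion --- contradiction (equivalently, the resolving argument manufactures a $k_\gamma$-th root of $\gamma$ already inside $G$, contradicting indivisibility). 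You cite Lemma~\ref{onetooneonedges} but never apply it to such a curve, which is the only place its content is needed.

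Two smaller problems in the case you do treat. ``A surface group contains no proper root of any of its curves'' is false as stated ($\gamma$ is a proper root of $\gamma^2$); you need that the relevant $E$ is primitive (which does hold after the \csa\ reduction to centralizers, but should be said). And your terminal contradiction in the boundary-curve subcase --- ``such edge groups map isomorphically, forcing $F(E)=E$'' --- is wrong: the paper's conventions explicitly allow the covering map $T_E\to T_A$ to be proper when $E$ is a boundary component of a \qh\ vertex group attached to a cyclic vertex group, so a nontrivial root \emph{can} be adjoined to a boundary curve; the root simply lives in the adjacent non-\qh\ vertex group, which is consistent with (indeed an instance of) the lemma's conclusion rather than a contradiction.
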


\par We use the immersion representing $G\into G'$ constructed in
subsection~\ref{subsec:graphsandimmersions}.

\begin{proof}
  Fix $E$ as in the statement of the lemma.  We are done if we show
  that $E$ is elliptic in every one edged splitting of $G$ obtained by
  cutting a \qh\ subgroup along an essential simple closed curve which
  doesn't cut off a \mobius\ band.\footnote{We could have instead
    redefined an essential curve as one which gives a principle cyclic
    splitting and isn't boundary parallel.}  Start with an immersion
  representing the \rjsj\ decompositions of $G$ and $G',$ and let
  $\Sigma_Q$ be the surface which contains $c$.  There is a unique
  element $\eta_{\#}(Q)$ containing the image of $Q,$ and the map
  $Q\to\eta_{\#}(Q)$ is surjective.  Since $Q\to\eta_{\#}(Q)$ is
  represented by a homeomorphism $\Sigma_Q\to\Sigma_{\eta_{\#}(Q)}$
  there is a simple closed curve $\eta_{\#}(c)$ contained in
  $\Sigma_{\eta_{\#}(Q)}$ and a closed annular closed neighborhood $A$
  of $c$ mapping homeomorphically to a neighborhood of $\eta_{\#}(c)$.
  Use these neighborhoods to construct new graphs of spaces $Y_G$ and
  $Y_{G'}$ representing $G$ and $G'$ by regarding the annulus as a new
  edge space and collapsing all but the newly introduced edges.  By
  construction, the map $Y_G\into Y_{G'}$ is an immersion.  By
  Lemma~\ref{onetooneonedges}, if some element of $\E$ crosses $c,$
  then $c$ maps to a power of $\eta_{\#}(c)$.
\end{proof}

\begin{figure}[h]
  \psfrag{Gi}{$\G(i)$}
  \psfrag{Gip}{$\G(i+1)$}
  \psfrag{Hi}{$\HH(i)$}
  \psfrag{Hip}{$\HH'(i)$}
  \psfrag{Vi}{$V^i_j$}
  \psfrag{Vip}{$V^{i+1}_j$}
  \psfrag{Wi}{$W^i_j$}
  \psfrag{pi}{$\pi_i$}
  \psfrag{pip}{$\pi'_i$}
  \psfrag{sigma}{$\sigma_i$}
  \psfrag{nup}{$\nu'_i$}
  \psfrag{nu}{$\nu_i$}
  \psfrag{eta}{$\eta_i$}
  \centerline{%
    \includegraphics{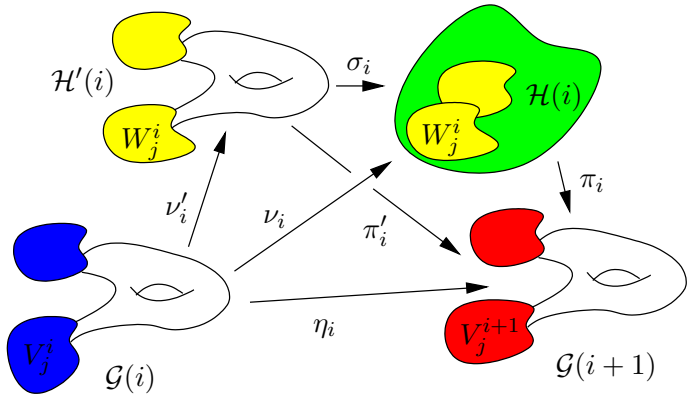}
}
\caption{Illustration of Lemma~\ref{alignedandqh}}
\label{qhexample}
\end{figure}

\begin{proof}[Proof of Lemma~\ref{alignedandqh}]
  Suppose $(\G,\HH)$ is a staircase such that $\scott(\G(i))$ is the
  constant sequence and $c_q(\G(1))\neq 0$.  Let $\Delta_i$ be the
  $\rjsj$ of $\G(i)$.  Every edge of $\Delta_i$ is infinite cyclic and
  connects a vertex group to a boundary component of a \qh\ vertex
  group.  Since the inclusions $\G(i)\into\G(i+1)$ respected graphs of
  spaces, by the first part of Lemma~\ref{dontpassthroughqh}, every
  element of $\E_i$ is conjugate into some non-\qh\ vertex
  group of $\Delta_i$.  Let $V^i_1,\dotsc,V^i_n$ be the non-\qh\ vertex
  groups of $\Delta_i$.  We regard $\G(i)$ as a graph of groups
  $\Gamma(V^i_j,Q_k,E_l),$ where $\G(i)\into\G(i+1)$ is compatible
  with the decomposition $\Gamma$ in the sense that $V^i_j$ maps to a
  conjugate of $V^{i+1}_j,$ the map respects edge group incidences,
  and the inclusion is the identity on the \qh\ vertex groups $Q_k$.

  Let $\E^j_i$ be the elements of $\E_i$ conjugate
  into $V^i_j,$ and arrange that each $E\in\E^j_i$ is
  contained in $V^i_j$ by conjugating if necessary.  Let $W^{i+1}_j$ be
  the image of $\adjoinroot{V^i_j}{\E^j_i}{}$ in $\HH(i+1)$
  and let $\HH'(i+1)=\Gamma(W^{i+1}_j,Q_k,E_l)$.  Then
  $(\G,\HH',\E)$ is an extension of $(\G,\HH,\E)$: The
  map implicit map $\sigma_i\colon\HH'(i)\to\HH(i)$ is clearly strict,
  therefore the sequence $\mathcal{H}'$ consists of limit groups.  By
  definition, $V^{i+1}_j$ is obtained from $V^i_j$ by adjoining
  roots.  Let $\mathcal{V}_j$ be the sequence $\mathcal{V}_j(i)=V^i_j$
  and let $\mathcal{W}_j(i)=W^i_j$.

  The staircases $(\mathcal{V}_j,\mathcal{W}_j,\E^j)$ all
  have lower first betti number than $(\G,\HH,\E)$.  Let
  $B(b_0)$ be the maximal number of vertex groups of a limit group
  with first betti number
  $b_0$~\cite[Lemma~\ref{STABLE-lem:rankheightbound}]{louder::stable}.
  If $\Vert\G\Vert>\ninj((b_0-1,d_0,e_0))\cdot B(b_0)$ then for at
  least one index $l$ all $\mathcal{W}_j(l)\onto\mathcal{V}_j(l+1)$
  are injective.  Thus $\pi'_l$ is $\Mod(\HH'(l),\rjsj)$ strict.  Since
  all modular automorphisms of $\HH'(l)$ are either inner, Dehn twists
  in boundary components of \qh\ vertex groups, or induced by boundary
  respecting homeomorphisms of surfaces representing \qh\ vertex
  groups, by construction, every element of $\Mod(\HH'(l),\rjsj)$
  pushes forward to a modular automorphism of $\G(l)$.  An easy
  exercise shows that $\HH'(l)\onto\G(l)$ is an isomorphism.  Since
  $\pi'_l=\pi_l\circ\sigma_l,$ $\pi_l$ is an isomorphism.
\end{proof}

\subsection{Freely indecomposable, no \qh}

\par The neighborhood of a vertex group $V$ of a graph of groups
decomposition is the subgroup generated by $V$ and conjugates of
adjacent vertex groups which intersect $V$ nontrivially, and is
denoted $\nbhd(V)$.  Let $(G,H,G')$ be a flight and suppose $G$ is
freely indecomposable, has no \qh\ vertex groups, and that
$\jcomp(G)=\jcomp(G')$.  Let $\eta\colon G\to G'$ be the inclusion
map.  An abelian vertex group $A$ of $G$ is \term{$H$--elliptic} if
$H$ doesn't have a principle cyclic splitting over a subgroup of
$\cent_H(\nu(A))$.

\par Let $\mathcal{A}_H$ be the collection of abelian vertex groups of
$G$ which are $H$--elliptic.  Suppose that $H$ is obtained from $G$ by
adjoining roots to the collection $\E$.  Let
$\E^{ell}_H$ be the sub-collection of $\E$
consisting of elements of $E$ which are hyperbolic in the principle
cyclic \jsj\ of $G$ but which have elliptic image in the principle
cyclic \jsj\ of $H$.  Let $\jsj_H(G)$ be the \jsj\ decomposition of $G$
with respect to the collection of principle cyclic splittings in which
all $\nbhd(A),$ $A\in \mathcal{A}_H,$ and $E\in\E^{ell}_H$
are elliptic:
\[
\jsj_H(G)\define \jsj(G;\set{\nbhd(A),E}:{A\in \mathcal{A}_H,E\in\E^{ell}_H}) 
\]
Let $\jsj^*_H(G')$ be the \jsj\ decomposition of $G'$ associated to
the collection of all principle cyclic splittings of $G'$ in which all
$\eta_{\#}(A_H),$ $A\in \mathcal{A}_H,$ and $\eta_{\#}(E),$
$E\in\E^{ell}_H,$ are elliptic.  That is
\[
\jsj^*_H(G')\define \jsj(G';\set{\nbhd(\eta_{\#}(A)),\eta_{\#}(E)}:{A\in \mathcal{A}_H, E\in\E^{ell}_H} )
\]

\par The main lemma is that the decompositions of $G$ and $G'$ induced
by $H$ are intimately related to the principle cyclic \jsj\ of $H$ as
long as the flight admits no proper extensions.  Let $V$ be a vertex
group of $\jsj_H(G)$.  There is a vertex group $\eta_{\#}(V)$ of
$\jsj^*_H(G')$ which contains the image of $V$.  Let $\E_V$
be the collection of elements of $\E$ which are conjugate
into $V,$ along with the collection of incident edge groups.  Likewise
for $\eta_{\#}(V),$ let $\E(\eta_{\#}(V))$ be the set of
centralizers of images of elements of $\E_V$.

\begin{lemma}
  \label{mainlemma}
  Let $(G,H,G')$ be a flight without any proper
  extensions, and suppose $G$ is freely indecomposable, has no
  \qh\ vertex groups, and that $\jcomp(G)=\jcomp(G')$.  Let $\eta\colon
  G\to G'$ be the inclusion map.  Then the following hold:
  \begin{itemize}
    \item For each vertex group $W$ of the principle cyclic \jsj\ of
      $H$ there are unique vertex groups $V$ and $\eta_{\#}(V)$ of
      $\jsj_H(G)$ and $\jsj^*_H(G'),$ respectively, such that
      $\nu(V)<W,$ $\pi(W)<\eta_{\#}(V)$.
    \item $W$ is obtained from $V$ by adjoining roots to
      $\E_V$ and
      $\Vert\E_V\Vert\leq\Vert\E\Vert+2\betti(G).$
    \item $\eta_{\#}(V)$ is obtained from $\pi(W)$ by adjoining roots
      to the images of $\E(V)$ (the edge groups incident to $V$)
    \item If $\pi$ is injective on vertex groups then it is an
      isomorphism.
  \end{itemize}
\end{lemma}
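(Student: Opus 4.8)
The plan is to study the three decompositions $\jsj(H)$, $\jsj_H(G)$, and $\jsj^*_H(G')$ simultaneously, transporting structure along $\nu\colon G\into H$ and $\pi\colon H\onto G'$ via the action of $G$ on the Bass--Serre tree $T$ of $\jsj(H)$. First I would record that $H^1(H,\zee)\into H^1(G,\zee)$: adjoining a root to an element is a rational homology isomorphism onto its image, so a homomorphism $H\to\zee$ vanishing on $\nu(G)$ vanishes on all roots, hence on $H$. With this, Lemma~\ref{surfaceinequality} gives $c_q(H)=0$, so $H$ is also \qh--free and its modular group is generated by inner automorphisms and Dehn twists in edge groups; and Lemma~\ref{actshyperbolically} applies to $\nu$, so $G$ is hyperbolic in every essential one-edged splitting of $H$ and inherits from $T$ a nontrivial principle cyclic decomposition $\Delta^H_G$ in which each vertex group $W$ of $\jsj(H)$ contains a conjugate of some vertex group of $\Delta^H_G$. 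By the definitions of $\mathcal{A}_H$ and $\E^{ell}_H$, the subgroups $\nbhd(A)$, $A\in\mathcal{A}_H$, and $E\in\E^{ell}_H$ are exactly the data forced to be elliptic in $H$, so $\Delta^H_G$ makes them elliptic and there is a collapse morphism $\jsj_H(G)\to\Delta^H_G$.

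The first bullet is the crux, and the plan is to show this collapse is an equivalence, $\Delta^H_G=\jsj_H(G)$, using the hypothesis that the flight admits no proper extension. A nontrivial collapse would exhibit a splitting of $G$ elliptic in $T$ but visible in $\jsj_H(G)$, which, after pushing forward along $\nu$ using the modular structure (exactly as the Dehn-twist pushforward in the introductory example), refines $\jsj(H)$ and produces an admissible extension $(\G,\HH',\E)$ with $\HH'\onto H$ proper, a contradiction. Granting this, the induced graph map carries each vertex $V$ of $\jsj_H(G)$ to a vertex $W$ of $\jsj(H)$ with $\nu(V)<W$, and the construction is symmetric on the $G'$ side: $\pi$ together with the strict-resolution results of~\cite{louder::stable} identifies $\jsj^*_H(G')$ with the decomposition of $G'$ induced by $T$, giving $\pi(W)<\eta_\#(V)$. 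Uniqueness of $V$ and of $\eta_\#(V)$ then follows from alignment: since $\jcomp(G)=\jcomp(G')$, Theorem~\ref{thm:alignment} makes the composite $\eta=\pi\circ\nu$ bijective on vertex groups and an isomorphism of underlying graphs, so each $W$ receives at most one $V$ and lands in at most one $\eta_\#(V)$.

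For the second and third bullets I would localize the defining relations. Restricting $\adjoinroot{G}{\E}{}\onto H$ to $W$---the elliptic edge identifications together with the roots adjoined to elements of $\E$ conjugate into $V$---exhibits $W$ as obtained from $V$ by adjoining roots to $\E_V$, the elements of $\E$ conjugate into $V$ together with the incident edge groups. The count $\Vert\E_V\Vert\leq\Vert\E\Vert+2\betti(G)$ follows because the first set contributes at most $\Vert\E\Vert$ classes while the incident edge groups contribute at most the number of edges of $\jsj_H(G)$, which is controlled by $\betti(G)$. The third bullet is the strict-resolution theory of~\cite{louder::stable} recalled in Appendix~\ref{appendix_strict}: since $\jcomp(G)=\jcomp(G')$, the map $\pi$ is strict with respect to the \jsj, so each $\eta_\#(V)$ is recovered from its preimage $\pi(W)$ by iteratively adjoining roots to the incident edge groups, i.e.\ to the images of $\E(V)$.

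Finally, the fourth bullet follows by assembling the graph-of-groups morphism $\pi\colon H\to G'$. The first bullet identifies this morphism vertex-by-vertex and edge-by-edge with the isomorphism of underlying graphs supplied by Theorem~\ref{thm:alignment}; since $\pi$ is injective on each vertex group by hypothesis and injective on the cyclic edge groups by strictness, the standard fact that a graph-of-groups morphism which is a graph isomorphism and injective on all vertex and edge groups is injective on fundamental groups shows $\pi$ is injective, and being surjective by the definition of a flight, it is an isomorphism. The main obstacle throughout is the first bullet, and specifically the translation of a hypothetical refinement of $\Delta^H_G$ into a genuine proper extension of the flight satisfying the admissibility conditions; everything else is bookkeeping against the alignment theorem and the strict-resolution machinery.
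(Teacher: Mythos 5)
The overall scaffolding here is right --- $H^1(H)\into H^1(G)$, Lemma~\ref{surfaceinequality} forcing $c_q(H)=0$, Lemma~\ref{actshyperbolically} giving the induced decomposition $\Delta^H_G$ from the action on $T_{\jsj(H)}$, and Theorem~\ref{thm:alignment} for the uniqueness statements --- and you correctly identify the first bullet as the crux. But at exactly that point there is a genuine gap, and the mechanism you propose for closing it does not work as stated. You say a nontrivial collapse $\jsj_H(G)\to\Delta^H_G$ would ``refine $\jsj(H)$'' and thereby produce a proper extension. A refinement of a graph-of-groups decomposition of $H$ is still a decomposition of $H$; a proper extension is a \emph{different group} $\HH'$ with $\adjoinroot{G}{\E}{}\onto \HH'\onto H$ and $\HH'\onto H$ non-injective. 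To produce one, the paper builds a candidate $H'=\Delta(S_l,F(E_k))$ over the graph of $\jsj_H(G)$, with vertex groups the images in $H$ of the \emph{localized} root adjunctions, and the entire difficulty is showing that $\adjoinroot{G}{\E}{}$ actually surjects onto $H'$ --- i.e.\ that $H'$ already contains a $k_\gamma$-th root of every $\gamma\in\E_h$ that is \emph{hyperbolic} in $\jsj_H(G)$. That is the content of the ``hyperbolic to elliptic'' machinery: attaching mapping cylinders $M_\gamma$ to form $X'_G$, making the map to $X_H$ transverse to the edge spaces, analyzing the track $\Lambda$ and the paths $D_\gamma$, $p_\gamma$, and invoking Lemmas~\ref{onetooneonedges} and~\ref{abeliansonto} to splice together a root $(\varphi\circ D_\gamma)p'_\gamma$ inside $H'$. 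Your appeal to ``pushing forward along $\nu$ using the modular structure'' addresses Dehn twists, not the realization of roots of hyperbolic elements, and without it the ``no proper extensions'' hypothesis cannot be applied. The same gap infects your second bullet: that $W$ is generated by $V$ together with roots of elements of $\E_V$ is the conclusion of Lemma~\ref{noextensions}, not something you can read off by ``restricting the relations to $W$,'' since the adjoined roots of hyperbolic elements a priori spread across the whole graph.

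There is also a genuine error in your fourth bullet. The ``standard fact'' that a morphism of graphs of groups which is an isomorphism of underlying graphs and injective on all vertex and edge groups is injective on fundamental groups is false: $A*_CB\to A*_{C'}B$ with $C<C'$ is injective on vertex and edge groups but kills the normal-form words $t_Eht_E^{-1}$ with $h\in C'\setminus C$. What is needed is that each edge group maps onto the full intersection of the adjacent vertex groups in the target --- precisely the strictness/centralizer-embedding condition --- and the paper instead argues via the strict homomorphism $\Phi_s(\pi)\colon\Phi_s(H)\onto G'$ of Appendix~\ref{appendix_strict}, showing that every modular automorphism of $\Phi_s(H)$ pushes forward to one of $G'$ and invoking the fact that a strict map along which all modular automorphisms push forward is an isomorphism.
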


\par The proof of Lemma~\ref{mainlemma} is contained in
section~\ref{hyptoell}, where graphs of spaces $X_G,$ $X_H,$
representing $\jsj_H(G)$ and $\jsj(H),$ respectively, and an immersion
$X_G\to X_H$ representing $G\into H,$ such that if the immersion is
not one-to-one on edge spaces, then there must be a nontrivial
extension, are constructed.  The remainder of the lemma is largely
formal, and relies on a simplification of the construction of strict
homomorphisms from~\cite{louder::stable}.

\subsection{Finishing the argument}

\label{finishargument}

\par In this section we prove Theorem~\ref{maintheorem}, postponing
the proofs of lemmas used in the previous section until
section~\ref{hyptoell}.  Let $(\G,\HH)$ be a staircase with complexity
$(b_0,d_0,e_0),$ such that no contraction has any proper extensions,
and suppose that Theorem~\ref{maintheorem} holds for staircasess with
complexity less than $(b_0,d_0,e_0)$.  By Theorem~\ref{thm:alignment}
there is some constant $B(b_0)$ such that $(\G,\HH)$ can be divided
into $B(b_0)$ staircases of constant Scott complexity: (To maintain
uniformity of the exposition, some sequences are allowed to be empty.)

\[(\G,\HH)\mapsto\set{(\G_i,\HH_i)}_{i=1,\dotsc,B(b_0,d_0)}\]
\[\G_i(1)=\G(j_i),\dotsc\quad\HH_i(2)=\HH(j_i+1),\dotsc\]

\par Only the last of these can consist of freely indecomposable
groups.  Each staircase $(\G_i,\HH_i),$ $i<B(b_0),$ by
Theorem~\ref{maintheorem::freelydecomposable}, has $\ninj$ bounded
above by $b_0\cdot \ninj(b_0-1,d_0,e_0),$ since there are at most
$b_0$ freely indecomposable free factors.  Thus we may confine our
analysis to freely indecomposable staircases.  By
Theorem~\ref{thm:alignment}, we may divide the staircase
$(\G,\HH,\E)$ into boundedly many segments, the number
depending only on the complexity of $\betti(\G),$ exhausting the
tower, such that $\jcomp$ is constant on each segment.  By
Lemma~\ref{existresolutions} we may assume that each segment is
maximal with respect to $\leq_{\sqrt{}}$.

\par Like the case when each $\G(i)$ is freely decomposable, if
$\G(i)$ has a \qh\ vertex group, by Lemma~\ref{alignedandqh} such
staircases have bounded $\ninj$.

\par The only possibility left is that the contractions of $(\G,\HH)$
are \qh--free.  Let $I$ be the index set for $\G,$ and color the
triple $i<j<k$ red if $\jsj^*_{\HH(j)}(\G(j))\cong\jsj_{\HH(k)}(\G(j)),$
and blue otherwise.  Then by Ramsey's theorem for hypergraphs, for all
$K$ there exists an $L$ such that if $\Vert\G\Vert>L$ then there is a
subset $I'\subset I$ of size at least $K$ such that all triples whose
elements are in $I'$ have the same color.

\begin{lemma}
  \label{lem:boundblue}
  There is an upper bound to the size of blue subsets which depends
  only on $\betti(\G)$ and $\Vert\E\Vert$.
\end{lemma}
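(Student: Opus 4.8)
The plan is to pass to a single segment on which the absolute principle cyclic \jsj\ stabilizes, and there to show that a blue subset records a strictly growing, hence bounded, collection of ellipticity data. By Theorem~\ref{thm:alignment} the tower $\G$ meets only boundedly many values of $\jcomp$, the number controlled by $\betti(\G)$, so I may restrict attention to a subset of $I$ lying in one segment on which $\jcomp(\G(j))$ is constant. On such a segment every inclusion $\G(j)\into\G(j')$ is aligned, so all of the principle cyclic \jsj s of the $\G(j)$ are canonically identified with a single model graph of groups $M$, and I will regard each relative decomposition $\jsj^*_{\HH(j)}(\G(j))$ and $\jsj_{\HH(k)}(\G(j))$ as a decomposition of $M$. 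Since on a fixed $M$ a relative principle cyclic decomposition is determined by its collection of elliptic subgroups (Lemma~\ref{lem:identitymap}, Lemma~\ref{lem:numberofdecompositions}), each such decomposition is recorded by the $\sim_Z$--classes of members of $\E$, together with the noncyclic abelian vertex groups, that it forces to be elliptic.

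The first step is a transition relation. Because no contraction of the staircase has a proper extension, Lemma~\ref{mainlemma} applies to each flight $(\G(j),\HH(k),\G(k))$ with $j<k$ consecutive in the subset: its vertex--group correspondence identifies the vertices of $\jsj(\HH(k))$ simultaneously with those of $\jsj_{\HH(k)}(\G(j))$ and of $\jsj^*_{\HH(k)}(\G(k))$, so under the model identification one has $\jsj_{\HH(k)}(\G(j))\cong\jsj^*_{\HH(k)}(\G(k))$. In words, the \emph{upward} decomposition at level $j$ equals the \emph{downward} decomposition at its successor $k$. Colouring a triple $j_l<j_{l+1}<j_{l+2}$ blue forces the downward and upward decompositions at the middle level $j_{l+1}$ to differ, and chaining this with the transition relation shows that consecutive downward decompositions $\jsj^*_{\HH(j_l)}(\G(j_l))$ and $\jsj^*_{\HH(j_{l+1})}(\G(j_{l+1}))$ along the subset are distinct.

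The decisive point is monotonicity of ellipticity up the tower. The downward decomposition at a level collapses exactly those $\sim_Z$--classes of $\E$ which are hyperbolic in the model but acquire elliptic image upon the single adjunction of roots producing the corresponding $\HH$, that is, the classes that ``die'' at that one step. Once such a class becomes elliptic its image remains elliptic in every higher group, by the vertex--group correspondence of Lemma~\ref{mainlemma} together with the fact that the quotients $\HH(k)\onto\G(k)$ and the further adjunctions carry elliptic subgroups to elliptic subgroups; consequently each class dies at \emph{exactly one} step, and the increments recorded by successive downward decompositions are pairwise \emph{disjoint} subsets of a set of size at most $\Vert\E\Vert$ (the noncyclic abelian contribution adds at most $O(\betti(\G))$). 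Blueness prevents two consecutive increments from both being empty, since two empty increments would give two equal downward decompositions, namely the model \jsj\ itself. A sequence of disjoint subsets of a set of size $\Vert\E\Vert+O(\betti(\G))$ with no two consecutive empty members has length at most $2\bigl(\Vert\E\Vert+O(\betti(\G))\bigr)+1$, which together with the bound on the number of segments from Theorem~\ref{thm:alignment} furnishes a bound on the size of any blue subset depending only on $\betti(\G)$ and $\Vert\E\Vert$.

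I expect the main obstacle to be the monotonicity step: one must verify, under the model identification, that the ``dying set'' of $\sim_Z$--classes is genuinely an increment of a monotone process, which requires tracking the $\sim_Z$--classes coherently across both the inclusions $\G(j)\into\HH(k)$ and the quotients $\HH(k)\onto\G(k)$, and invoking the vertex correspondence of Lemma~\ref{mainlemma} to guarantee that ellipticity, once gained, is never lost. The transition relation is the other delicate ingredient, as it is what converts the triple--colouring into a statement about consecutive levels; the closing counting argument is then elementary.
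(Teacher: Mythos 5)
Your overall strategy is genuinely different from the paper's, and it contains a gap at exactly the step you flag as the main obstacle: the monotonicity of ellipticity. You assert that once a $\sim_Z$--class of $\E$ acquires elliptic image in some $\jsj(\HH(j))$, its image remains elliptic in every higher group, so that the ``dying sets'' recorded by successive downward decompositions are pairwise disjoint subsets of a set of size roughly $\Vert\E\Vert$. Neither Lemma~\ref{mainlemma} nor the observation that quotients carry elliptic subgroups to elliptic subgroups delivers this: the quotient $\HH(k)\onto\G(k)$ carries subgroups elliptic in $\jsj(\HH(k))$ to subgroups elliptic in the \emph{induced} decomposition of $\G(k)$, but the next relative decomposition $\jsj_{\HH(k+1)}(\G(k))$ is computed against the splittings of the different group $\HH(k+1)$, and nothing you cite rules out a class becoming hyperbolic again there. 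Without disjointness your count collapses: the downward decompositions could alternate between two values, producing arbitrarily many consecutive-distinct pairs while only ever visiting two decompositions, so ``no two consecutive increments empty'' does not bound the length of the subset. The monotonicity claim is doing all the work and is left unproved.

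The paper sidesteps all of this with a single pigeonhole. It fixes the bottom index $i$ of the blue set and considers the decompositions $\jsj_{\HH(l)}(\G(i))$ of the \emph{one} group $\G(i)$ as $l$ varies; by Lemma~\ref{lem:numberofdecompositions} there are at most $2^{\Vert\E\Vert}$ equivalence classes of such decompositions, so if the blue set is larger than that, two of them, at indices $j<k$, have the same elliptic subgroups, and this forces $\jsj_{\HH(k)}(\G(j))\cong\jsj^*_{\HH(j)}(\G(j))$, i.e.\ the triple $i<j<k$ is red. No monotonicity, no transition relation, and no passage to aligned segments is needed; the coincidence is detected between two not-necessarily-consecutive levels, which is precisely what defeats the alternation problem above. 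If you want to salvage your route you must prove the monotonicity claim outright; note that it would yield a bound linear in $\Vert\E\Vert$ where the paper only obtains an exponential one, a further sign that you are claiming strictly more than the cited lemmas provide.
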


\begin{proof}
  By Lemma~\ref{lem:numberofdecompositions}, there are at most
  $2^{\Vert\E\Vert}$ equivalence classes of principle cyclic
  decomposition of $G$ in which some element of $\E$ is
  elliptic.  (There may be none.) Suppose $\vert
  I'\vert>2^{\Vert\E\Vert},$ and consider the collection of
  principle cyclic decompositions $\set{\jsj_{\HH(l)}(\G(i))}$.  Thus, for
  some $i<j<k,$ $\jsj_{\HH(j)}(\G(i))$ and $\jsj_{\HH(k)}(\G(i))$ have
  the same elliptic subgroups.  Then
  $\jsj_{\HH(k)}(\G(j))\cong\jsj^*_{\HH(j)}(\G(j))$ since a \jsj\
  decomposition is determined up to equivalence solely by its elliptic
  subgroups.
\end{proof}

\par We are now on the home stretch.  Suppose again that
$(b_0,d_0,e_0)$ is the lowest complexity for which
Theorem~\ref{maintheorem} doesn't hold.  By Lemma~\ref{lem:boundblue}
and the prior discussion, there must be staircases
$(\G,\HH,\E)$ of arbitrary $\ninj,$ which have complexity
$(b_0,d_0,e_0),$ are maximal, pure, and have no \qh\ vertex groups.

\par Let $(\G,\HH)$ be such a staircase.  Let $\V'_j(i)$ be the
nonabelian vertex groups of $\G(i),$ indexed such that $\V'_j(i)$ maps
to $\V'_j(k)$ for all $k>i$.  Let $\W_j(i)$ be the corresponding rigid
vertex group of $\HH(i)$.  By the second bullet of
Lemma~\ref{mainlemma}, $\W_j(i+1)$ is obtained from $\V'_j(i)$ by
adjoining roots to $\E^{j,\prime}_i,$ the set of elements of
$\E_i$ which are conjugate into $\V'_j(i),$ along with the
incident edge groups.

\par Let $\V_j(i)<\V'_j(i)$ be the image of $\W_j(i)$ in $\G(i)$.  By
the third bullet of Lemma~\ref{mainlemma}, $\V'_j(i)$ is obtained from
$\V_j(i)$ by adjoining roots to the images of the edge groups incident
to $\W_j(i)$.  Let
\[
\E^{j}_i\define\\
\set{E\cap \V_j(i)}:{E\in \E^{j,\prime}_i}\cup\\
\set{E\cap\V_j(i)}:{E\in\E(\V_j(i))}
\]
The incident edge groups are cyclic, and we can build $\W_j(i)$ by
simply adjoining roots to $\E^{j}_i$ in $\V_j(i)$.  Then
$\E^{j}_i$ is larger than $\E$ by at most the
number of edge groups incident to $\V'_j(i),$ which is at most
$2\betti(\mathcal{G})$.  That is,
\[
  \Vert\E^{j}_i\Vert\leq\Vert\E\Vert+2\betti(\G)
  \leq
  \Vert\E\Vert+2\betti(\G)(\depthpc(\HH)-\depthpc(\W_j))
\]


\par Given a sufficiently long \qh--free staircase
$(\G,\HH,\E),$ we passed to a maximal extension (which we will
also call $(\G,\HH,\E)$) of a substaircase of prescribed
length, such that the sequences of vertex groups
$(\V_j,\W_j,\E^{j})$ of the extension were cyclic staircases.
The vertex groups of the extension are subgroups of the vertex groups
of $\HH,$ hence the depth of $\W_j(i)$ is strictly less than the depth
of $\HH$.  Moreover, the first betti number of $\W_j(i)$ is at most
$\betti(\HH)$ and by Lemma~\ref{mainlemma},
$\comp((\G,\HH,\E))>\comp((\V_j,\W_j,\E^j))$.  There
is an upper bound $B(b_0)$ to the number of vertex groups of the
principle cyclic \jsj\ of a limit group with first betti number $b_0$.
If $\Vert\G\Vert>B(b_0)\cdot \ninj(b_0,d_0-1,e_0+2b_0)$ there is some
index $l$ such that $\HH(l)\onto\G(l)$ is injective on all vertex
groups.  By the last bullet of Lemma~\ref{mainlemma},
$\HH(l)\onto\G(l)$ is an isomorphism.


\section{Hyperbolic to elliptic}
\label{hyptoell}

\subsection{Graphs of spaces and immersions}
\label{subsec:graphsandimmersions}

\par In this section we are given a fixed flight $(G,G',H)$ of limit
groups.  By a \term{graph of spaces representing a principle cyclic
  decomposition} of a limit group $G$ we mean a graph of spaces of the
following form:

\begin{itemize}
  \item For each rigid vertex group $R$ a space $X_R$.  Let
    $\E(R)$ be the edge groups incident to $R,$ and for each
    $E\in\E(R)$ let $\sqrt{E}$ be the maximal cyclic subgroup
    of $R$ containing the image of $E$.  For each $E\in\E$
    there is an embedded copy $S_E$ of $S^1$ in $X_R$ representing the
    conjugacy class of $\sqrt{E}$.
  \item For each edge $E,$ a copy $T_E$ of $S^1,$ with basepoint $b_E$
    and an edge space $T_E\times\mathrm{I}$.  On occasion we confuse
    $T_E$ with $T_E\times\frac{1}{2},$ and sometimes refer to $T_E$ as
    the edge space.  The interval $b_E\times I$ is denoted $t_E,$ and
    we choose an arbitrary orientation for $t_E$.  The end of the edge
    space associated to $E$ is attached via the covering map
    $T_E\immerses S_E$ representing $E\into\sqrt{E}$.
  \item For each abelian vertex group a torus $T_A$.  If $A$ is
    infinite cyclic then $T_A$ has a basepoint $b_A$ and the incident
    edge maps are simply covering maps which send $b_E$ to
    $b_A$.  These covering maps are isomorphisms unless the edge is
    adjacent to a \qh\ vertex group, in which case they may be
    proper.  For each edge space edge $E$ adjacent to $A,$ an edge
    space $T_E\times I$ and an embedded copy of $T_E$ in $T_A$.  This
    assumes edge groups not adjacent to \qh\ vertex groups are
    primitive.  Though there may be \qh\ vertex groups, the cases which
    this definition is designed to handle do not, and we let this
    inconsistency slide.  Unlike the rigid case, the embedded $T_E$
    need not be disjoint, though if they meet, they coincide.  We
    require that any two embedded $T_E,$ differ by an element of
    $T_A,$ treated now as a group.
  \item For each \qh\ vertex group $Q$ a surface with boundary $\Sigma_Q$.
  \item If an edge group $E$ is incident to a \qh\ vertex group $Q$
    then $T_E$ is identified with a boundary component of $\Sigma_Q$.
  \item The resulting graph of spaces has the fundamental group of $G$.
\end{itemize}

\par Let $\eta\colon G\into H$ be an inclusion of limit groups, and
let $\Pi_G$ and $\Pi_H$ be principle cyclic decompositions of $G$ and
$H,$ respectively, such that $\eta$ maps vertex groups to vertex
groups, edge groups to edge groups, and respects edge data, i.e., if
$E\into V,$ $\eta_{\#}(E)\into\eta_{\#}(V),$ then the obvious square
commutes.  If this is the case then $\eta$ \term{respects} $\Pi_G$ and
$\Pi_H$.  Let $X_H$ be a graph of spaces representing $\Pi_H$.  Then
there is a principle cyclic decomposition $\Pi_G$ of $G,$ a space
$X_G$ representing $\Pi_G,$ and an \term{immersion} $\psi\colon X_G\to
X_H,$ inducing $\eta,$ of the following form:

\begin{itemize}
\item For each abelian vertex group $A$ there is a finite sheeted
  covering map $\psi\vert_{T_A}\colon T_A\immerses
  T_{\eta_{\#}(A)}$.  The inclusions of incident edge spaces are
  respected by $\eta$:
  \[\psi\vert_{\img(T_E)}=(T_{\eta_{\#}(E)}\into T_{\eta_{\#}(A)})\circ\psi\vert_{T_E}\]
\item For each $E$ there is a finite sheeted product-respecting
  covering map $T_E\times\mathrm{I}\immerses
  T_{\eta_{\#}(E)}\times\mathrm{I}$ which maps $t_E$ to
  $t_{\eta_{\#}(E)}$.  If $E$ is adjacent to a \qh\ vertex group then
  the degree of the covering map is one.
\item For each $R$ there is a map $X_R\to X_{\eta_{\#}(R)}$
  such that for each edge group $E$ incident to $R$ the following
  diagram commutes:

  \centerline{
    \xymatrix{
      T_E\times\set{0}\ar[r]\ar[d] & X_R\ar[d] \\
      T_{\eta_{\#}(E)}\times\set{0}\ar[r] & X_{\eta_{\#}(R)}
    }}
Likewise for $\times\set{1}$.

\item For each $\Sigma_Q$ a homeomorphism
  $\Sigma_Q\to\Sigma_{\eta_{\#}(Q)}$.  The maps $X_R\to
  X_{\eta_{\#}(R)}$ (similarly for $T_A$'s) respect attaching maps
  of boundary components of surfaces.

\item If $E_1$ and $E_2$ are incident to $A$ and $T_{E_1}$ and
  $T_{E_2}$ have the same image in $T_A,$ then
  $\eta_{\#}(E_1)\neq\eta_{\#}(E_2)$.
\end{itemize}

\par The existence of immersions as above is an easy variation on
Stallings's folding. One way to construct immersions of graphs
representing subgroups is to pass to the cover of a graph representing
a subgroup and trimming trees. There is an analogous construction in
this context.

\subsection{Roots, immersions, and resolving}

\par We need to be able to represent conjugacy classes of elements of
limit groups as nice paths in graphs of spaces.

\begin{definition}[Edge path]
  Let $X_G$ be a graph of spaces representing a principle cyclic
  decomposition of $G$.  The \term{zero skeleton} of $X_G,$ denoted
  $X^0_G,$ is the union of vertex spaces.

  An \term{edge path} in a graph of spaces $X_G$ is a map
  $p\colon\left[0,1\right]\to X_G$ such that $p^{-1}(X^0_G)$ contains
  $\set{0,1}$ and is a disjoint collection of closed subintervals.  Let
    $\left[a,b\right]$ be the closure of a complementary component of
    $p^{-1}(X^0_G)$.  Then $p$ maps $\left[a,b\right]$ homeomorphically
    to some $t_E$.
 
    Let $X_R$ be a vertex space.  Set $\partial X_R$ be the union of
    copies of edge spaces contained in $X_R$.  An edge path $p$ is
    reduced if every restriction
    $p\vert_{\left[a,b\right]}(\left[a,b\right];\set{a,b})\to(X_R,\partial
    X_R)$ does not represent the relative homotopy group
    $\pi_1(X_R,\partial X_R)$

  A continuous map $\gamma\colon\mathrm{S}^1\to X_G$ is
  \term{cyclically reduced} if all edge-path restrictions of $\gamma$
  to subintervals $\mathrm{I}\subset\mathrm{S}^1$ are reduced edge
  paths.
\end{definition}

\par The following lemma is standard and follows easily from Stallings
folding \cite{stallings1,stallings0} and the definitions.

\begin{lemma}
   If $g\in G$ then there is a cyclically reduced edge path
   $\gamma\colon\mathrm{S}^1\to X_G$ representing the conjugacy class
   $\left[g\right]$.

   Let $\psi\colon X_G\immerses X_H$ be an immersion representing
   $G\into H$.  If $\gamma\colon\mathrm{S}^1\to X_G$ is a reduced edge
   path then $\psi\circ\gamma$ is a reduced edge path in $X_H$.
\end{lemma}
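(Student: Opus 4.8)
The plan is to transport the standard Stallings-folding normal-form argument from graphs of groups into the present graph-of-spaces language. For the first assertion, begin with any loop $\gamma_0\colon\mathrm{S}^1\to X_G$ in the free homotopy class corresponding to $[g]$, which exists since $\pi_1(X_G)\cong G$. After a homotopy putting $\gamma_0$ in general position with respect to the midpoint circles $T_E\times\{\frac{1}{2}\}$, the preimage of $X^0_G$ is a finite disjoint union of closed arcs and each complementary arc maps homeomorphically onto some $t_E$, so $\gamma_0$ becomes an edge path; let $\ell(\gamma_0)$ denote its number of edge crossings. If $\gamma_0$ is not cyclically reduced then some vertex-space restriction $p\vert_{[a,b]}\colon([a,b],\{a,b\})\to(X_R,\partial X_R)$ represents the trivial relative class; one homotopes $\gamma_0$ so that this portion is pushed into $\partial X_R$ and cancels the two adjacent edge crossings, strictly decreasing $\ell$. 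Since $\ell$ is a nonnegative integer this process terminates, and performing the cancellations across the basepoint of $\mathrm{S}^1$ as well yields a cyclically reduced edge path representing $[g]$.

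For the second assertion I show that $\psi$ reflects reducedness. Each edge crossing $t_E$ of $\gamma$ is carried by the product-respecting covering $T_E\times\mathrm{I}\immerses T_{\eta_{\#}(E)}\times\mathrm{I}$ homeomorphically onto $t_{\eta_{\#}(E)}$, and each vertex-space portion is carried by the corresponding vertex map (a covering $T_A\immerses T_{\eta_{\#}(A)}$, a homeomorphism $\Sigma_Q\to\Sigma_{\eta_{\#}(Q)}$, or the rigid-vertex map $X_R\to X_{\eta_{\#}(R)}$); hence $\psi\circ\gamma$ is again an edge path with the same edge-length as $\gamma$. Suppose toward a contradiction that $\psi\circ\gamma$ is not reduced, so that some vertex-space portion of it is homotopic rel endpoints into $\partial X_{\eta_{\#}(R)}$, exhibiting a backtrack across a single edge space. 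Because the vertex maps are covering maps or homeomorphisms, I lift this relative null-homotopy through $\psi$: homotopy lifting reflects the trivial relative class, so the corresponding vertex-space portion of $\gamma$ is itself homotopic rel endpoints into $\partial X_R$. Thus $\gamma$ has a backtrack, contradicting its reducedness, and $\psi\circ\gamma$ is reduced.

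The step requiring genuine care, and the only real departure from the one-dimensional Stallings picture, is making the homotopy-lifting reflection precise at each kind of vertex space. For \qh\ vertices the vertex map is a homeomorphism and there is nothing to check. For abelian vertices the covering $T_A\immerses T_{\eta_{\#}(A)}$ supplies the lifting, but one must also invoke the last clause in the definition of an immersion, namely that two incident edges with the same image in $T_A$ have distinct images $\eta_{\#}(E)$; this forbids the fold that would otherwise let a genuinely non-backtracking portion of $\gamma$ acquire a spurious backtrack in the image. For rigid vertices the map $X_R\to X_{\eta_{\#}(R)}$ need not be a covering, so here I use the commuting squares relating it to the edge-space inclusions, together with the facts that $\eta$ is an inclusion and the decomposition of $G$ is induced from its action on $T_H$; these give $E=R\cap\eta_{\#}(E)$, so an element of $R$ whose image lies in the edge subgroup $\eta_{\#}(E)$ already lies in $E$, which is exactly what is needed to pull the image backtrack back to $\gamma$. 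Assembling these three verifications into the single statement that immersions do not create backtracks is the substance of the argument; the rest is the routine termination bookkeeping for folding.
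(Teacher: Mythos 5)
Your argument is correct and is exactly the standard Stallings-folding normal-form argument that the paper invokes without proof (it dismisses the lemma as ``standard and follows easily from Stallings folding and the definitions''). You have simply made explicit the three vertex-type checks — in particular the role of the last bullet of the immersion definition at abelian vertices and of the induced-decomposition property $E=R\cap\eta_{\#}(E)$ at rigid vertices — which is consistent with, and a reasonable expansion of, the paper's intent.
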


\par For each edge $E$ of $X_G,$ we introduced a subset $t_E$ of the
edge space $T_E\times I$.  We think of $t_E$ as a formal element
representing the path $I\to b_E\times\mathrm{I}$ with a fixed but
arbitrary orientation.  Let $\tau(t_E)$ be the image of the basepoint
of $T_E$ in the vertex space of $X_G$ at the terminal end of
$T_E\times\mathrm{I},$ and let $\iota(t_E)$ be the image of the
basepoint in the vertex space at the initial end of $T_E$.  Then every
nonelliptic element represented by a cyclically reduced path can be
thought of as a composition $t_E$'s, their inverses, and elements of
relative homotopy groups of vertex spaces.  Moreover, if the subword
$t_Egt^{-1}_E$ appears then $g$ is not contained in the image of $E$.

\par Let $\gamma\in G$ be represented by a cyclically reduced edge
path $\gamma$; $\psi\circ\gamma$ is an edge path in $X_H,$ and if it
is not cyclically reduced, then for some sub-path $t_E h t_E^{-1}$ of
$\gamma$ (we may need to reverse the orientation of $t_E$), the image
of this subpath is homotopic into $\eta_{\#}(T_E),$ which means that
$\left[h\right]\in\eta_{\#}(E)$. Since $\gamma$ is reduced,
$\left[h\right]\notin E,$ and since the image of $E$ in $\eta_{\#}(E)$
is finite index, for some $l>0,$ $\left[h\right]^l\in E$.  Since edge
groups are primitive unless adjacent to \qh\ vertex groups, $E$ must
be attached to a boundary component of a \qh\ vertex.  This implies
that $\eta_{\#}(E)$ is also attached to a boundary component of a
\qh\ vertex group, but this means $E\to\eta_{\#}(E)$ is an
isomorphism, contradicting the fact that $\left[h\right]\notin E$.



\par Let $G$ and $H$ be freely indecomposable limit groups, $H$
obtained from $G$ by adjoining roots to $\E,$ $\eta\colon
G\into H$.  Let $\Pi_G$ and $\Pi_H$ be principle cyclic decompositions
and suppose that if $K$ is elliptic in $\Pi_G$ if and only if
$\eta(K)$ is elliptic in $\Pi_H$.  Let $\psi\colon X_G\to X_H$ be an
immersion representing the inclusion.

\par Without loss of generality, suppose that all elements of
$\E$ are self-centralized and nonconjugate.  Let
$\E_e$ be the elements of $\E$ which are elliptic in
$\Pi_G$ and let $\E_h$ be the elements of $\E$ which
are hyperbolic in $\Pi_G$.

\par For each $E\in\E$ let $T_E$ be a torus representing $E,$
$T_{F(E)}$ a torus representing $F(E),$ and let $T_E\to T_{F(E)}$ be
the covering map corresponding to the inclusion $E\into F(E)$.  Let
$M_E$ be the mapping cylinder of the covering map.  If
$\group{\gamma}\in\E$ we abuse notation and refer to
$M_{\group{\gamma}}$ as $M_{\gamma}$.  The copy of $T_{F(E)}$ in
$M_E$ is the \term{core} of $M_E,$ and if $E$ is infinite cyclic, it
is the \term{core curve}.  The copy of $T_E$ in $M_E$ is the
\term{boundary}, and is denoted $\partial M_E$.

\par For each element $E\in\E_e,$ let $f_E\colon T_E\to X_G$
be a map representing the inclusion $E\into G$ which has image in a
vertex space of $X_G$.  If $E$ is an abelian vertex group of $\Pi_G$
then we identify $T_E$ with the torus $T_A\subset X_G$.  For each
$\group{\gamma}\in\E_h,$\footnote{All elements of
  $\E_h$ are infinite cyclic.} represent $\gamma$ by a
reduced edge path, abusing notation, $\gamma\colon\partial
M_{\gamma}\to X_G$.

\par Build a space $X'_G$ by attaching the $M_E$ and $M_{\gamma}$ to
$X_G$ along $T_E$ and $\img(\gamma)$ by the maps $f_E$ and $\gamma,$
respectively.

\par By hypothesis there is a $\pi_1$--surjective map $\psi'\colon
X'_G\to X_H$.  We choose this map carefully: For $E\in\E_e,$
$F(E)$ has elliptic image in $\Pi_H$.  Choose a map $T_{F(E)}\to X_H$
with image contained in the appropriate vertex space of $X_H,$ and
extend the map across $M_E$ so that $M_E$ also has image contained in
the vertex space of $X_H$.  For
$\group{\gamma}\in\E_h,$ the core curve of $M_{\gamma}$
is a $k_{\gamma}$--th root of $\gamma$.  Choose a cyclically reduced
representative of $\sqrt[k_{\gamma}]{\gamma}\colon S^1\to X_H$ and let
the map on the core curve agree with this representative.

\par The restriction of $\psi',$ defined thus far, to the disjoint
union of $X_G$ and the core curves of the $M_{\gamma},$ is transverse
to the subsets $T_{\eta_{\#}(E)}\times\set{\frac{1}{2}}$.  Extend $\psi'$ to
$X_G$ so the composition $M_{\gamma}\into X'_G\xrightarrow{\psi'}X_H$
is transverse to all $T_{\eta_{\#}(E)}\times\frac{1}{2}$.  Let
$\Lambda$ be the preimage

\[
  \psi'^{-1}\left(\sqcup_{E\in\E(G)}\left(T_{\eta_{\#}(E)}\times\set{\frac{1}{2}}\right)\right)
\]

\par Suppose some component of $\Lambda$ is a circle which misses the
boundary and core of some $M_{\gamma}$.  By transversality this
component of $\Lambda$ is a one manifold without boundary, and is
therefore a circle.  If this circle bounds a disk then there is a map
homotopic $\psi',$ which agrees with $\psi'$ on the core curves and
$X_G$ such that the number of connected components of the preimage is
strictly lower.  If the circle doesn't bound a disk in $M_{\gamma}$
then it is boundary parallel.  If this is the case then $\gamma$ is
elliptic and we have a contradiction.

\par Fix a mapping cylinder $M_{\gamma}$ and consider the preimage of
$\Lambda$ under the map $M_{\gamma}\to X'_G$.  The preimage is a graph
all of whose vertices are contained in the core curve of $M_{\gamma}$
or in the boundary of $M_{\gamma}$.  If any component of the preimage
of $\Lambda$ doesn't connect the boundary of $M_{\gamma}$ to the core
curve, then it is an arc and there is an innermost such arc which can
be used to show that one of either $\gamma$ or $\gamma'$ is not
reduced.  Thus the preimages of arcs connect the core curve to the
boundary.


\par Let $b$ be a point of intersection of $\Lambda$ with the core
curve of $M_{\gamma}$.  There are $k_{\gamma}$ arcs, where
$k_{\gamma}$ is the degree of the root added to $\gamma,$
$s_1,\dotsc,s_{k_{\gamma}}$ (cyclically ordered by traversing
$\partial M_{\gamma}$) in $\Lambda$ connecting $b$ to $\partial
M_{\gamma}$.  Now consider the arcs as paths
$s_j\colon\left[0,1\right]\to M_{\gamma}$.  The composition
$p_{\gamma}\define s^{-1}_2s_1$ is a path in $M_{\gamma}$ from
$\partial M_{\gamma}$ to $\partial M_{\gamma}$.  Let $D_{\gamma}$ be
the sub-path of $\gamma$ obtained by traversing $\partial M_{\gamma}$
from $*\define s_{1}\cap\partial M_{\gamma}$ to $*_2\define
s_{2}\cap\partial M_{\gamma}$.  The path $D_{\gamma}p_{\gamma}$ is
homotopic, relative to $*,$ to $s_1^{-1}\sqrt[k_{\gamma}]{\gamma}s_1$.
In particular,
\[(D_{\gamma}p_{\gamma})^{k_{\gamma}}\simeq\gamma\] 

\par A possible neighborhood of a component of $\Lambda$ is
illustrated in Figure~\ref{trackneighborhood}.

\begin{figure}[ht]
\psfrag{Lambda}{$\Lambda$}
 \psfrag{XR}{$X_R$}
 \psfrag{bEtimes}{$t_E$}
 \psfrag{TEtimes}{$T_E\times\mathrm{I}$}
 \psfrag{b}{$b$}
 \psfrag{s}{$s_{b,j}$}
 \psfrag{Dgamma}{$D_{\gamma}$}
 \psfrag{s1}{$s_1$}
 \psfrag{s2}{$s_2$}
 \psfrag{=s2}{$=*_2$}
 \psfrag{=s}{$=*$}
 \centerline{
   \includegraphics[scale=0.8]{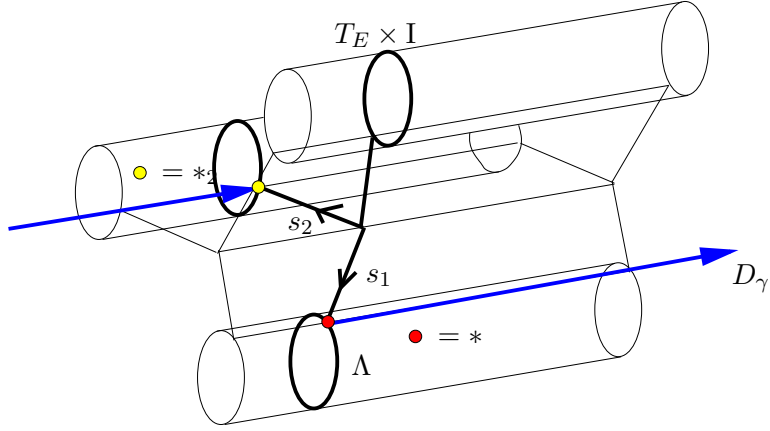}
 }
 \caption{A neighborhood of a component of $\Lambda$ in $X'_G$}
 \label{trackneighborhood}
 \end{figure}


\par Three interrelated lemmas.

\begin{lemma}
  \label{onetooneonedges}
  Suppose $\eta\colon G\into H,$ $H$ obtained from $G$ by adjoining
  roots to $\E,$ $G$ freely indecomposable.  Let $\Pi_H$ be a
  one-edged splitting of $G$ over a cyclic edge group $E_H$.  Let
  $\Pi_G$ be the splitting $G$ inherits from its action via $\eta$ on
  the Bass-Serre tree for $\Pi_H$.  Represent $\Pi_H$ by a graph of
  spaces $X_H,$ and choose a graph of spaces $X_G$ and an immersion
  $\psi\colon X_G\immerses X_H$ representing $\eta$.  Suppose that
  $\Pi_G$ is one-edged, and that the edge group is $E$.  If
  $\E_h$ is nonempty 
  then $E\into \eta_{\#}(E)$ is a
  \emph{proper} finite index inclusion.
\end{lemma}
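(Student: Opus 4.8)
The plan is to recover the index $[\eta_{\#}(E):E]$ as the number of sheets of the edge-space covering $T_E\times\mathrm{I}\immerses T_{\eta_{\#}(E)}\times\mathrm{I}$ furnished by the immersion $\psi,$ and to force that number to be at least $k_{\gamma}$. Fix $\gamma\in\E_h$ and write $\delta\define\sqrt[k_{\gamma}]{\gamma}$; we may assume $k_{\gamma}>1,$ since an index-one adjunction imposes no relation and the offending element may be deleted from $\E$. As $\gamma$ is hyperbolic in $\Pi_G,$ so is $\delta$ in $\Pi_H,$ and both cross the respective edges; by the reduced-path lemma $\psi\circ\gamma$ is again cyclically reduced and realizes $\gamma=\delta^{k_{\gamma}}$ as the reduced representative in $X_H,$ which retraces the reduced representative of $\delta$ exactly $k_{\gamma}$ times.

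Next I would exploit the mapping cylinder $M_{\gamma}$ and the track $\Lambda$ already in hand. Let $b$ be a crossing of the core curve with $\Lambda,$ and let $s_1,\dotsc,s_{k_{\gamma}}$ be the $k_{\gamma}$ arcs of $\Lambda$ joining $b$ to $k_{\gamma}$ distinct points of $\partial M_{\gamma},$ one in each period of $\gamma=\delta^{k_{\gamma}}$; the attaching map sends these to crossings $*_1,\dotsc,*_{k_{\gamma}}$ of $\gamma$ with $E,$ all lying in the edge space $T_E\times\set{\frac{1}{2}}$ of $X_G$. The central observation is that $\psi'$ collapses all of them to a single point: since $\psi'$ carries $\partial M_{\gamma}$ to the representative of $\delta^{k_{\gamma}},$ and the latter simply runs over the core representative of $\delta$ exactly $k_{\gamma}$ times, the crossing occurring at the same phase in each of the $k_{\gamma}$ periods is the one crossing $\psi'(b)$ of $\delta$ with $\eta_{\#}(E)$. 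Thus I will have produced $k_{\gamma}$ points of $T_E\times\set{\frac{1}{2}}$ whose common image is $\psi'(b)$.

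Finally I would conclude by showing these $k_{\gamma}$ points are distinct in $X_G$. Suppose to the contrary that $E\to\eta_{\#}(E)$ is an isomorphism; then the covering $T_E\times\set{\frac{1}{2}}\immerses T_{\eta_{\#}(E)}\times\set{\frac{1}{2}}$ is a homeomorphism and $*_1=\dotsb=*_{k_{\gamma}}$ as points of $X_G$. The $k_{\gamma}$ sub-paths of $\gamma$ delimited by these coinciding points are then loops based at their common point, each carried by $\psi$ to the single reduced representative of $\delta$ in $X_H$; because $\psi$ is $\pi_1$--injective, representing the inclusion $\eta,$ the loops represent one and the same element $\beta\in G,$ so that $\gamma=\beta^{k_{\gamma}}$ with $k_{\gamma}>1$. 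This contradicts the indivisibility of $\gamma,$ which holds because the elements of $\E$ were taken self-centralized. Hence the covering has at least $k_{\gamma}$ sheets and $E\into\eta_{\#}(E)$ is a proper finite-index inclusion. I expect the crux to be precisely this last move: promoting the bare coincidence $*_1=\dotsb=*_{k_{\gamma}}$ to the group-theoretic identity $\gamma=\beta^{k_{\gamma}},$ which demands careful tracking of basepoints together with the $\pi_1$--injectivity of the immersion; by comparison the coincidence $\psi'(*_j)=\psi'(b)$ and the reading of $[\eta_{\#}(E):E]$ as a covering degree are routine.
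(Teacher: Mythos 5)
There is a genuine gap, and it sits exactly at the step you yourself flag as the crux. You assert that the $k_{\gamma}$ sub-paths of $\gamma$ delimited by consecutive crossings of the edge space are loops ``each carried by $\psi$ to the single reduced representative of $\delta$.'' That is false in general: the periods of the cyclically reduced representative $\psi\circ\gamma$ of $\delta^{k_{\gamma}}$ agree with a based representative of $\delta$ only up to a holonomy element of $\eta_{\#}(E)$, namely the class of the correction path $p_{\gamma}=s_2^{-1}s_1$ that travels through the mapping cylinder $M_{\gamma}$ (the paper's identity is $D_{\gamma}p_{\gamma}\simeq s_1^{-1}\sqrt[k_{\gamma}]{\gamma}\,s_1$ rel $*$, not $D_{\gamma}\simeq s_1^{-1}\sqrt[k_{\gamma}]{\gamma}\,s_1$). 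Whether that holonomy lies in the image of $E$ is precisely the content of the lemma, so it cannot be assumed away. A symptom of the problem is that your contradiction never actually uses the hypothesis you are contradicting: in the paper's model every edge traversal of a reduced edge path runs homeomorphically over $t_E=b_E\times\mathrm{I}$, so all crossings occur at the single point $b_E\times\set{\frac{1}{2}}$ and $*_1=\dotsb=*_{k_{\gamma}}$ holds automatically, whatever the degree of $T_E\to T_{\eta_{\#}(E)}$. If the remainder of your argument were sound it would therefore show that every $\gamma\in\E_h$ already has a $k_{\gamma}$-th root in $G$, i.e.\ that a root can never be properly adjoined to a hyperbolic element --- which is far too strong. (Relatedly, the stronger claim that $[\eta_{\#}(E):E]\geq k_{\gamma}$ is not what the lemma asserts and is not established by this route.)

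The repair is the paper's argument, which quantifies exactly the holonomy you discard. Since $\Pi_G$ and $\Pi_H$ are both one-edged, $*$ and $*_2$ lie in the single edge space and $[\psi'\circ p_{\gamma}]$ is a well-defined element of $\eta_{\#}(E)$. If that element lay in $\img(E)$ one could choose a path $p'_{\gamma}$ in $T_E\subset X_G$ homotopic rel endpoints in $T_{\eta_{\#}(E)}$ to $\psi'\circ p_{\gamma}$; then $\alpha\define D_{\gamma}p'_{\gamma}$ is a genuine loop in $X_G$, and $\psi\circ\alpha^{k_{\gamma}}\simeq\psi\circ\gamma$ forces $\alpha^{k_{\gamma}}=\gamma$ in $G$ by $\pi_1$--injectivity, contradicting indivisibility of $\gamma$ (your appeal to self-centralization is fine for this last point). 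Hence $[\psi'\circ p_{\gamma}]\in\eta_{\#}(E)\setminus\img(E)$ and the inclusion is proper. Your mapping-cylinder setup is the right scaffolding; what is missing is the recognition that $p_{\gamma}$, not the identity, is the phase shift between consecutive periods, and that the dichotomy ``$[\psi'\circ p_{\gamma}]\in\img(E)$ or not'' is where the conclusion comes from.
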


\begin{proof}

  Let $\group{\gamma}\in\E_h,$ and represent $\gamma$
  by a reduced edge path crossing $t_E$.  Since $\psi$ is one-to-one on
  edge spaces, $p_{\gamma}$ is a closed path.  As such, it represents
  an element of the fundamental group of $X'_G$.  Then $\left[\psi\circ
    p_{\gamma}\right]\in\eta_{\#}(E)$.  If $\left[\psi\circ
    p_{\gamma}\right]\in\img(E)$ then there is a path $p'_{\gamma}$ in
  $T_E$ which is homotopic in $T_{\eta_{\#}(E)},$ relative to the
  image of $*,$ to $\psi\circ p_{\gamma}$.  Let
  $\alpha=D_{\gamma}p'_{\gamma}$.  Then $\psi\circ\alpha$ is homotopic
  rel the image of $*$ to $\psi\circ D_{\gamma}p_{\gamma}$.  But then
  $\left[\alpha\right]^{k_{\gamma}}=\gamma$ contradicting
  indivisibility of $\gamma$.
\end{proof}

\begin{lemma}
  \label{abeliansonto}
  Let $G\into G'$ be an adjunction of roots.  Let $\Pi_{G'}$ be a
  principle cyclic splitting of $G'$ with one abelian vertex group
  $A,$ let $\Pi_G$ be the associated splitting of $G,$ and represent
  $G\into G'$ by an immersion $\eta\colon X_G\immerses X_{G'}$
  reflecting $\Pi_G$ and $\Pi_{G'}$.  Suppose there is a unique vertex
  group $A'$ of $\Pi_G$ mapping to $A,$ and that there is at most one
  element of $\E$ conjugate into $A'$.  If $\eta$ is
  one-to-one on edges adjacent to $A'$ then the induced map $F(A')\to
  A/P(A)$ is onto.
\end{lemma}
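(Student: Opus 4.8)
The plan is to show that every non-peripheral direction of $A$ is realized by the adjoined supergroup $F(A')$. First I would pin down $A'$ concretely. Since there is a unique vertex group of $\Pi_G$ over $A,$ after conjugating we may take $A'=G\cap A,$ and by the definition of the immersion $\eta$ restricts to a finite sheeted covering $T_{A'}\immerses T_A,$ so $A'\leq A$ is finite index (any discrepancy in rank, were the covering onto a proper subtorus, would be an edge direction and hence peripheral). The hypothesis that $\eta$ is one-to-one on the edges adjacent to $A'$ says the edge circles meeting $T_{A'}$ inject into those meeting $T_A,$ respecting incidence; so the saturated subgroup generated by the edge groups of $A'$ maps onto, up to finite index, the saturated subgroup generated by the edge groups of $A.$ Recalling that $A/P(A)$ is the free abelian quotient of $A$ by the saturation of the subgroup generated by its incident edge groups, this matches $P(A')$ with $P(A)\cap A'$ up to finite index and reduces the lemma to the interior directions.

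Next I would dispose of rational surjectivity. Because $A'\leq A$ is finite index, $A'\otimes\mathbb{Q}\to A\otimes\mathbb{Q}$ is an isomorphism, so $F(A')\to A/P(A)$ is onto after tensoring with $\mathbb{Q}.$ (Alternatively this follows from the standing injectivity $\eta^{\#}\colon H^1(G',\zee)\into H^1(G,\zee)$: a homomorphism $A/P(A)\to\zee$ kills all incident edge groups, hence extends across the graph of groups to a class in $H^1(G',\zee)$ by setting it to zero on the remaining vertex and stable letters, and injectivity of $\eta^{\#}$ forces that class to be detected already on $A'.$) Thus the cokernel of $F(A')\to A/P(A)$ is finite, and it remains only to rule out a nontrivial finite cokernel.

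The heart of the matter is this integral statement, and it is the step I expect to be the main obstacle. Let $\bar a\in A/P(A)$ and lift it to $a\in A$; by rational surjectivity some power $a^{n}$ lies in $A'\cdot P(A),$ say $a^{n}=a'p$ with $a'\in A'$ and $p\in P(A).$ Thus, modulo the peripheral subgroup, $a$ is an $n$-th root of the element $a'$ of $A'.$ Here I would use that the ambient group $G'$ is \csa\ and torsion free, so it has unique roots, together with the hypothesis that \emph{at most one} member of $\E$ is conjugate into $A'.$ Since $A$ is a maximal abelian subgroup of $G'$ with $A'=G\cap A,$ and $G'$ is obtained from $G$ by an adjunction over $A',$ commutative transitivity gives $\cent_{G'}(A')=F(A');$ any genuine root of an element of $A'$ that survives in $G'$ is therefore one of the roots recorded in the single adjunction $F(A').$ Consequently $a$ lies in $F(A')$ modulo $P(A),$ so $\bar a$ is in the image of $F(A'),$ giving surjectivity.

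The delicate points are exactly where the two remaining hypotheses are spent. Verifying $\cent_{G'}(A')=F(A')$ — rather than something larger, so that no unrecorded root of an element of $A'$ appears — is where the \csa\ hypothesis and the restriction to a single element of $\E$ in $A'$ are essential; without the single-element restriction there could be several independent fractional directions while $F(A')$ records only one. Equally, one must ensure that the fractional class $\bar a$ is not absorbed into $P(A)$ through a coincidence among edge groups, and this is precisely what the one-to-one-on-edges hypothesis prevents: it forbids two distinct edges of $A'$ from folding onto a single edge of $A$ and thereby enlarging the peripheral subgroup in a direction that $F(A')$ would fail to see.
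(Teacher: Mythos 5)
Your opening reductions are fine: $A'\leq A$ is finite index via the covering $T_{A'}\immerses T_A$, so the cokernel of $F(A')\to A/P(A)$ is finite, and the whole content is the integral statement. The gap is exactly where you predicted it, and it is not repairable by the tools you invoke. The assertion $\cent_{G'}(A')=F(A')$ does not follow from commutative transitivity: \csa\ gives only the easy containment, that the image of $F(A')$ lies \emph{inside} the maximal abelian subgroup of $G'$ containing $A'$ (which is essentially $A$). The reverse containment --- that this image together with $P(A)$ exhausts $A$ --- is the lemma itself, so the argument is circular. Concretely: given $\bar a$ with $\bar a^{\,n}=\bar{a'}$, $a'\in A'$, unique roots in $G'$ would let you conclude $a\in\img(F(A'))$ only if you already knew $F(A')$ \emph{contains} an $n$-th root of $a'$ modulo $P(A)$; but $F(A')$ is a fixed finite extension of $A'$ recording only the adjoined roots, and $a'$ need not be one of them. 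Equivalently, you are claiming the finite group $A/(\img(F(A'))\cdot P(A))$ is torsion-free, which is the same as claiming it is trivial. Nothing in your argument rules out elements of $A$ that are words in generators coming from several vertex groups, stable letters, and roots adjoined elsewhere in $G$, invisible to $F(A')\cdot P(A)$. (Also, $G'$ is obtained from $G$ by adjoining roots to all of $\E$, not by a single adjunction over $A'$, so the centralizer of $A'$ in the quotient can a priori pick up contributions from those other adjunctions.)

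The paper closes this gap with a generation argument rather than a roots argument. Since $G'$ is by definition a quotient of $\adjoinroot{G}{\E}{}$, it is generated by $G$ together with the adjoined roots; the proof assembles these into a graph-of-groups subgroup $H=\Delta(S_j,F(E_i),A'')$ with abelian vertex $A''=\img_{G'}(F(A'),P(A))$, and shows $H\into G'$ is onto by exhibiting a $k_{\gamma}$-th root of each hyperbolic $\gamma\in\E$ inside $H$ via the resolving paths $D_\gamma$, $p_\gamma$ --- this is where the hypothesis that $\eta$ is one-to-one on the edges adjacent to $A'$ is actually spent (it makes $p_\gamma$ a closed path, as in Lemma~\ref{onetooneonedges}), not in controlling the peripheral subgroup as you suggest. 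Once $H=G'$, composing with the retraction $G'\onto A/P(A)$ that kills every vertex group, edge group, and stable letter other than $A$ kills all of $H$ except $A''$, and $P(A)$ dies in the target, so $F(A')$ must surject onto $A/P(A)$. Any repair of your outline needs some substitute for this step: a proof that $A$ is generated by $\img(F(A'))$ and $P(A)$, which cannot come from \csa\ and unique roots alone.
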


\begin{proof}
  Let $E_1,\dotsc,E_n$ be the edges adjacent to $A',$ and set
  $F(E_i)=\eta_{\#}(E_i)=P(A)$.  Let $H$ be the limit group defined as
  follows: Let $\Delta=\Delta(R_j,E_i,A')$ be a graphs of groups
  representation of $\Pi_G$.  Let $\E_{R_j}$ be the
  subcollection of $\E$ consisting of elements conjugate into
  $R_j$.  Let 
  \[
    S_j\define\img_{G'}\left(\group{\adjoinroot{R_j}{\E_{R_j}}{},gF(E_i)g^{-1}}_{gE_ig^-1<R_j}\right)
    \]
  and \[A''\define\img_{G'}(F(A'),P(A))\] Let
  $H\define\Delta(S_j,F(E_i),A'')$ There are maps $G\into H\into
  G'$.  We now show that $H\into G'$ is actually surjective.  To do this
  we need to show that every element $\group{\gamma}$ of
  $\E_h$ has a $k_{\gamma}$--th root in $H$.  This is
  precisely the argument given at the end of
  Lemma~\ref{onetooneonedges}.  Let $G'\onto A/P(A)$ be the map which
  kills all vertex, edge groups, and stable letters, other than
  $A$.  The quotient map clearly kills everything except $A$ and
  $F(A'),$ giving the desired surjection.
\end{proof}


\begin{lemma}
  \label{noextensions}
  \label{vertexgroupsobtainedbyadjoiningroots}
  Let $(G,H,G')$ be a flight without any proper
  extensions.  Suppose $G$ is freely indecomposable, has no \qh\ vertex
  groups, and $\jcomp(G)=\jcomp(G')$.  Represent the $G\into H$ by an
  immersion $X_G\immerses X_H,$ representing
  $\jsj_H(G),$ and $\rjsj(H),$ respectively.  Then $\nu$ is one-to-one
  on edge spaces.

  Every vertex group $W$ of $H$ is obtained from a vertex group $V$ of
  $\jsj_H(G)$ by adjoining roots to the elements of $\E$
  which are conjugate into $V,$ along with edge groups incident to
  $V$.
\end{lemma}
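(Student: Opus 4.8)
The plan is to argue against the no-proper-extension hypothesis using the graph of spaces $X'_G$ and the $\pi_1$--surjective map $\psi'\colon X'_G\to X_H$ constructed above. Recall that $X'_G$ is $X_G$ with the mapping cylinders $M_E$ ($E\in\E_e$) and $M_\gamma$ ($\gamma\in\E_h$) attached, that $\psi'$ has been made transverse to the midlevel edge spaces $T_{\eta_{\#}(E)}\times\set{\frac{1}{2}}$, and that every component of the track preimage $\Lambda$ meeting an $M_\gamma$ runs from the core curve to $\partial M_\gamma$. Since $\jcomp(G)=\jcomp(G')$, Theorem~\ref{thm:alignment} shows that $\eta=\pi\circ\nu\colon G\into G'$ is aligned: it is bijective on vertex and edge groups and induces an isomorphism of underlying graphs, so $\jsj_H(G)$ and $\jsj^*_H(G')$ have the same combinatorics and it remains only to locate $\rjsj(H)$ between them.

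First I would show that $\nu$ is one-to-one on edge spaces. Note that $\pi_1(X'_G)=\adjoinroot{G}{\E}{}$ and that $\psi'$ realizes the defining surjection $\adjoinroot{G}{\E}{}\onto H$. Following Stallings, factor this surjection through a sequence of folds of $X'_G$ terminating at $X_H$. If $\nu$ is not one-to-one on edge spaces, that is, two distinct edges of $X_G$ are identified in $X_H$, then performing all the folds except this offending edge-identification yields a graph of spaces $X_{H'}$ with $\adjoinroot{G}{\E}{}\onto H'\onto H$, where $H':=\pi_1(X_{H'})$ and $\sigma\colon H'\to H$ is the remaining fold, hence not injective on the set of edge groups. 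Because the folds respect the elliptic/hyperbolic dichotomy of $\Pi_G$ and $\Pi_H$ and the graph-of-groups structure, the $\qh$--free admissibility condition holds: the vertex groups of $\rjsj(H')$ embed in those of $H$ under $\sigma$. Admissibility then makes $\sigma$ strict and surjective, so $H'$ is a limit group and gives an admissible extension of the flight; since $\sigma$ is not injective on edges it is proper, $H'>_{\sqrt{}}H$, contradicting the hypothesis. Hence $\nu$ is one-to-one on edge spaces.

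Granting one-to-oneness, I would read off the vertex-group statement. By $\pi_1$--surjectivity of $\psi'$ and the alignment of $G$ and $G'$, each vertex group $W$ of $\rjsj(H)$ has a unique vertex group $V$ of $\jsj_H(G)$ with $\nu(V)<W$, giving the correspondence of Lemma~\ref{mainlemma}. Restricting $\psi'$ to $X_V$ together with the mapping cylinders attached along it exhibits $W$ as a quotient of $\adjoinroot{V}{\E_V}{}$: the elements of $\E$ conjugate into $V$ contribute the interior roots, through the $M_E$ and the $p_\gamma$, $D_\gamma$ data of the $M_\gamma$, while the edge groups incident to $V$ contribute the roots adjoined along edges. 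That these latter inclusions $E\into\eta_{\#}(E)$ are the correct finite-index ones follows from Lemma~\ref{onetooneonedges}, applied to the one-edged splitting obtained by collapsing all edges but one. Finally, the quotient $\adjoinroot{V}{\E_V}{}\onto W$ is an isomorphism, so that $W$ is obtained by adjoining exactly these roots, since any further identification would again yield a proper fold, contradicting the no-extension hypothesis; for abelian $W$ the needed surjectivity onto $W/P(W)$ is Lemma~\ref{abeliansonto}.

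The main obstacle is the folding argument of the second paragraph: one must verify that an arbitrary failure of injectivity on edge spaces is realized by a fold whose output is a bona fide admissible extension. In the $\qh$--free case the delicate point is that folding can create a $\qh$ vertex group in $\rjsj(H')$ — adjoining a root to a hyperbolic element can produce a surface group — and one must check that every such vertex group still embeds in a vertex group of $H$ under $\sigma$, so that the $\qh$--free admissibility condition is not violated. Matching the combinatorial edge-identification to the track picture inside the $M_\gamma$, so that the closedness of $p_\gamma$ exploited in Lemma~\ref{onetooneonedges} is exactly what fails, and confirming that the resulting extension is proper rather than an isomorphism, is where the real work lies; the remaining deductions are formal once one-to-oneness on edge spaces is established.
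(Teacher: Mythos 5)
Your overall strategy---use the no-proper-extension hypothesis to force injectivity on edge spaces by exhibiting an intermediate group $H'$ with $\adjoinroot{G}{\E}{}\onto H'\onto H$---is the same as the paper's, but the step you yourself flag as ``where the real work lies'' is precisely the step the paper has to do, and your folding picture does not supply it. The paper does not isolate ``the offending fold'' from an unspecified Stallings factorization; it constructs $H'$ explicitly as $\Delta(R'_l,A'_j,F(E_k))$, with vertex groups the images in $H$ of the vertex groups of $\jsj_H(G)$ augmented by roots of incident edge groups and of the elliptic elements of $\E$, and with edge groups the maximal roots $F(E_k)$. This explicit form is what makes $H'\to H$ visibly strict (hence $H'$ a limit group) and makes $X_G\to X_{H'}$ one-to-one on edge spaces by construction. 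In your version, a generic intermediate stage of a fold sequence from $X'_G$ has none of these properties guaranteed, and folds of graphs of groups neither commute nor admit a canonical decomposition, so ``perform all the folds except this one'' is not a well-defined operation.

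The genuinely missing idea is the argument that $H'$ contains the roots $\sqrt[k_\gamma]{\gamma}$ of the \emph{hyperbolic} elements $\gamma\in\E_h$; without this, $\adjoinroot{G}{\E}{}$ does not map onto $H'$ and $H'$ is not an extension at all, so no contradiction with the hypothesis is available. The paper proves this by returning to the transversality picture: the arcs $s_1,s_2$ give the paths $D_\gamma$, $p_\gamma$; the edge spaces $T_{E_1},T_{E_2}$ containing $*$ and $*_2$ are conjugate in $H$, hence conjugate in $G$ because $\jcomp(G)=\jcomp(G')$, and cannot both be attached to a rigid vertex group of $G$ (that would force $v(R)>v(\eta_{\#}(R))$ and drop the complexity), so both are attached to an abelian vertex group $A$; then Lemma~\ref{abeliansonto} gives surjectivity of $H'\to H$ on abelian vertex groups, which is exactly what is needed to lift $p_\gamma$ to a path $p'_\gamma$ in $X_{H'}$ and produce the root $(\varphi\circ D_\gamma)p'_\gamma$ as in Lemma~\ref{onetooneonedges}. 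None of this appears in your proposal, and your invocation of Lemma~\ref{abeliansonto} only at the very end, for the vertex-group statement, misplaces its role. Your final paragraph's concern about creating \qh\ vertex groups is not where the difficulty sits; the difficulty is establishing that the candidate $H'$ is a splitting group for the full system $\Sigma(\E,\mathcal{F}(\E))$ in the first place.
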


\begin{proof}[Proof of Lemma~\ref{noextensions}] 
  Represent $G\into H$ by an immersion $X_G\to X_H,$ where $X_G$
  represents $\jsj_H(G)$ and $X_H$ represents the principle cyclic
  \jsj\ of $H$.  For each edge $E_i$ of $X_G$ let $e_i$ be a generator,
  let $k_i$ be the largest degree of a root of $e_i$ in $H,$ let
  $F(E)=\group{f_i},$ and let $E\into F(E)$ be the map which
  sends $e_i$ to $f_i^{k_i}$.  Let $\E'$ be the collection of
  elements of $\E$ which are elliptic in $H$ along with all
  edge groups of $\jsj_H(G)$.  

  Consider the group $\adjoinroot{G}{\E'}{}$.  Let
  $\Delta=\Delta(R_i,A_j,E_k)$ be a graph of groups representation of
  $\jsj_H(G)$.  Let $\E_{R_i}$ be the set of elements of
  $\E'$ which are conjugate into $R_i$.  Likewise, let
  $\E_{A_j}$ be the set of elements of $\E'$ which
  are conjugate into $A_j$.  Let 
  \[S_l=\img_H(\group{S_l,gBg^{-1}}_{gBg^{-1}<S_l,B\in\E_{S_l}})\]
  where $gBg^{-1}<S_l,$ and where $S_l$ is either some rigid vertex
  group $R_i$ or abelian vertex group $A_j$.  Let
  \[H'=\Delta(R'_l,A'_j,F(E_k))\] and choose a graph of spaces
  $X_{H'}$ representing this decomposition of $H'$.  There is a pair of
  maps of graphs of spaces $X_G\to X_{H'},$ $X_{H'}\to X_H,$ and there
  is an epimorphism $\adjoinroot{G}{\E'}{}\onto H'$.  The map
  $\psi'\colon X_{G}\to X_{H'}$ is one-to-one on edge
  spaces.  Moreover, $H'$ is a limit group since the map $H'\to H$ is
  clearly strict.

  The proof of the lemma will be complete if we can show that $\psi'$
  extends to $X'_G,$ that is, if $H'$ contains all roots of elements
  adjoined to $\E$.  Then the image of
  $\adjoinroot{G}{\E}{}$ (with the induced graph of groups
  decomposition) in $H'$ is a nontrivial extension of $H$.

  Consider the paths $D_{\gamma}$ and $p_{\gamma}$ defined previously
  through resolving.  We defined $p_{\gamma}\define s_2^{-1}s_1$ and
  set $*=s_1\cap\partial M_{\gamma}$.  Let $*_2\define s_2\cap\partial
  M_{\gamma}$.  To show that $H'$ has a $k_{\gamma}$--th root of
  $\gamma$ we need to show that $X_{H'}$ has a path $p'_{\gamma}$ from
  the image of $*_2$ to the image of $*$ whose image under $X_{H'}\to
  X_{G'}$ is homotopic rel endpoints to the image of $p_{\gamma}$.

  Suppose that $T_{E_1}\times\frac{1}{2}$ and
  $T_{E_2}\times\frac{1}{2}$ are the midpoints of edge spaces
  containing $*$ and $*_2,$ respectively, and suppose, without loss of
  generality, that $D_{\gamma}$ starts and ends by traversing the
  second and first halves of $T_{E_1}$ and $T_{E_2},$ respectively, in
  the positive direction.  The first key observation to make is that we
  can choose the orientations of $t_{E_i}$ so that the terminal
  endpoints of $t_{E_1}$ and $t_{E_2}$ are both contained in some
  $T_A$: $E_1$ and $E_2$ are conjugate in $H,$ must therefore be
  conjugate in $G$ since $\jcomp(G')=\jcomp(G),$ and cannot both be
  adjacent to a rigid vertex group of $G,$ otherwise there is a rigid
  vertex group $R$ of $G$ such that $v(R)>v(\eta_{\#}(R))$.  The only
  other possibility is that they are both adjacent to an abelian
  vertex group $A,$ as claimed.

  Let $t^+_{\varphi_{\#}(E_i)}$ be the half of $t_{\varphi_{\#}(E_i)}$
  obtained by traversing $t_{\varphi_{\#}(E_i)}$ from the midpoint to
  the terminal endpoint.  By Lemma~\ref{abeliansonto}, $H'\to H$ is
  surjective on abelian vertex groups, and by construction, the
  terminal endpoints of $t^+_{\varphi_{\#}(E_i)}$ agree.  Let
  $p''_{\gamma}\define t^+_{\varphi_{\#}(E_2)}
  (t^+_{\varphi_{\#}(E_1)})^{-1}$.  Then $p''_{\gamma}$ is a path from
  $\varphi(*_2)$ to $\varphi(*)$ whose image in $X_H$ is homotopic rel
  endpoints into $\psi_{\#}(E_1)(=\psi_{\#}(E_2))$.  Since $H'\onto H$
  is surjective on edge groups, there is a closed path $h_{\gamma}$ in
  $(T_{\varphi_{\#}(E_1)},\varphi(*))$ which maps to the image of
  $p_{\gamma}$.  Set $p'_{\gamma}\define h_{\gamma}p''_{\gamma}$.  The
  image of $p'_{\gamma}$ is homotopic rel endpoints to the image of
  $p_{\gamma}$ in $X_H$.  Arguing as in Lemma~\ref{onetooneonedges},
  $(\varphi\circ D_{\gamma})p'_{\gamma}$ is a $k_{\gamma}$--th root of
  $\varphi\circ\gamma$ and the map $X'_G\to X_H$ factors through
  $X_{H'}$.  

  Thus there is a map $X'_G\to X_{H'}$.  Since $H$ has no proper
  extensions,
  $\img_{H'}\left(\adjoinroot{G}{\E}{}\right)\to H$ is
  an isomorphism.  In particular, $X_G\to X_H$ is one-to-one on edges
  and the situation above never occurs.

  Consider the construction of $H'$.  Now that we know that $H'\cong
  H,$ $\Delta$ \emph{must} be the principle cyclic \jsj\ of $H$.  If
  there is a principle cyclic splitting of $H$ not visible in $\Delta$
  then it must be a cyclic splitting inherited from the relative (to
  incident edge groups) principle cyclic \jsj\ decomposition of some
  vertex group of $\Delta$.  On the other hand, all vertex groups of
  $\Delta$ must be elliptic in the principle cyclic \jsj\ of $H$ since
  they are obtained by adjoining roots to subgroups of $G$ which are
  guaranteed to be elliptic in the principle cyclic \jsj\ of $H$.
\end{proof}

\par This nearly completes the proof of Lemma~\ref{mainlemma}.  We
need to prove that the vertex groups of $\jsj^*_H(G')$ are obtained
from the images of the vertex groups of $\jsj(H)$ by adjoining roots,
and that $\pi$ is injective if its restrictions to vertex groups are
injective.

\par Let $\Delta=\Delta(R_i,A_j,E_k)$ be a graph of groups
decomposition representing the principle cyclic \jsj\ of $H$.  We know
that all vertex and edge groups of $\Delta$ map to vertex and edge
groups of $\jsj^*_H(G')$.  Let $\Phi_s(\pi)\colon \Phi_s(H)\onto G'$
be the strict homomorphism constructed
in~\cite[\S~\ref{STABLE-sec:constructstrict}]{louder::stable}, and
also in the appendix of this paper, and let $\Phi_s(\Delta)$ be the
principle cyclic decomposition of $\Phi_s(H)$ in which all images of
vertex groups of $\Delta$ are elliptic.  Clearly $\Phi_s(\pi)$ maps
elliptic subgroups of $\Phi_s(\Delta)$ to elliptic subgroups of
$\jsj^*_H(G')$.  Moreover, if $A$ is a noncyclic abelian vertex group
of $H,$ then by construction, $\Phi_s(\pi)$ maps $A/P(A)$ onto
$\pi_{\#}(A)/P(\pi_{\#}(A))$.  Thus all modular automorphisms of
$\Phi_s(H)$ supported on abelian vertex groups of $\Delta$ push
forward to modular automorphisms of $G'$.  Another consequence of the
hypothesis $\jcomp(G)=\jcomp(G')$ is that $\Phi_s(H)\to G'$ is one to
one on the set of edge groups adjacent to every vertex group, hence
every Dehn twist of $\Phi_s(H)$ pushes forward to a Dehn twist of
$G'$.  A strict map which allows all modular automorphisms to push
forward is an isomorphism, therefore $\Phi_s(\pi)$ is an isomorphism.

\par The third bullet follows immediately from the construction of
$\Phi_s$.

\appendix
\section{Constructing strict homomorphisms}
\label{appendix_strict}

\par We give here a description of the process of constructing strict
homomorphisms of limit groups.  Let $G$ be a group with a one-edged
splitting $\Delta$ with nonabelian vertex groups of the form $G\cong
R*_{\group{e}}S,$ and suppose there is a map $\varphi\colon G\to L,$
$L$ a limit group, which embeds $R$ and $S$.  Then $R$ and $S$ are
limit groups.  Suppose further that $\varphi$ embeds
$R*_{\group{e}}\cent_S(\group{e})$ and $\cent_R(\group{e})*_{\group{e}} S$.
Then $\varphi$ is \term{strict}, and $G$ is a limit group.  There is a
process, whose output is a limit group $\Phi_s(G),$ which takes the
data $(G,\Delta,L,\varphi)$ and produces a triple $G\to\Phi_S(G)\to
L,$ such that the composition is $\varphi,$ $\Phi_S(G)$ splits over
the centralizer of $\group{e},$ and $\Phi_S(G)\to L$ is strict.

\par The process is one of pulling centralizers and passing to images
of vertex groups in a systematic way.  The reader should compare this
to the more general construction detailed in~\cite{louder::stable},
and a formally identical version in the proof of
\cite[Lemma~7.9]{bf::lg}.  Let $G=G_0$. Define for

\begin{itemize}
  \item odd $i$: $G_i=R_{i-1}*_{\cent_{R_{i-1}}(\group{e})}S_i,$ where
    \[S_i\define\img_L(\cent_{R_{i-1}}(\group{e})*_{\cent_{S_{i-1}}(\group{e})}S_{i-1})\]
  \item even $i$: $G_i=R_i*_{\cent_{S_{i-1}}(\group{ e})}S_{i-1},$ where
    \[R_i\define\img_L(R_{i-1}*_{\cent_{S_{i-1}}(\group{ e})}\cent_{S_i}(\group{ e}))\]
\end{itemize}

\par We claim that this process terminates in finite time.  The
sequence of quotients $G_0\onto G_1\onto\dotsc$ embeds edge groups at
every step.  Since abelian subgroups of limit groups are finitely
generated and free, and since finitely generated free abelian groups
satisfy the ascending chain condition the assertion holds.  The direct
limit $G_{\infty}$ is called $\Phi_s(G)$.

\par This discussion is relevant to the proof of
Lemma~\ref{noextensions}, but we must vary the construction a little.
Let $\bar{H}$ be the quotient of $H$ obtained by passing to the images
in $G'$ of vertex groups of the (restricted) principle cyclic \jsj\ of
$H,$ with the induced graph of groups decomposition
$\Delta(\bar{R_i},A_j,E_k)$.  The \term{core} of $\bar{H},$
$\core(\bar{H})$ is the group obtained by replacing each abelian
vertex group $A$ by its peripheral subgroup.  Consider the situation
in Lemma~\ref{noextensions}.  There is a homomorphism
$\core(\bar{H})\to G',$ and each group is equipped with a principle
cyclic decomposition $\Delta_{\core(\bar{H})}$ and $\Delta_{G'},$
respectively.  Moreover, the nonabelian vertex groups of
$\Delta_{\core(\bar{H})}$ map to nonabelian vertex groups of $G',$ and
the edge groups of $\core(\bar{H})$ map to edge groups of
$\Delta_{G'}$.  The centralizers of edges incident to nonabelian
vertex groups of $G'$ are infinite cyclic, and this implies that in
the process of pulling centralizers in $\core(\bar{H})_i,$ the pulled
group is always infinite cyclic. Each vertex group of
$\core(\bar{H})_i$ has elliptic image in $G',$ and since $G'$ is
principle, centralizers are cyclic in the relevant vertex groups of
$G'$.  Iteratively adjoining roots to an infinite cyclic subgroup and
passing to quotients multiple times can be accomplished in one step,
thus the vertex groups of $\core(\bar{H})_{\infty}$ are obtained from
the vertex groups of $\core(\bar{H})$ by adjoining roots to incident
edge groups.  There are surjective maps $H\onto
\Phi_s(H)\define\core(\bar{H})_{\infty}*_{Z(P(A_j))}(Z(P(A_j))\oplus
A/P(A_j))\onto L$.

\bibliographystyle{amsalpha} \bibliography{krull}

\end{document}